\numberwithin{equation}{section}
\theoremstyle{plain}
\newtheorem{thm}{Theorem}[section] 
\newtheorem{prop}[thm]{Proposition}
\newtheorem{cor}[thm]{Corollary}
\newtheorem{lem}[thm]{Lemma}
\newtheorem{theorem*}{Theorem}[]
\theoremstyle{definition}
\newtheorem{defn}[thm]{Definition}
\newtheorem{example}[thm]{Example}
\theoremstyle{remark}
\newtheorem{rmk}[thm]{Remark}
\theoremstyle{property}
\newcommand{\N}{\mathbb{N}}
\newcommand{\R}{\mathbb{R}}
\DeclareMathOperator{\grad}{grad\,}
\DeclareMathOperator{\dist}{dist\,}
\def\accentclass@{7}
\def\makeacc@#1#2{\def#1{\mathaccent"\accentclass@#2 }}
\makeacc@\cir{017}
\newcommand{\s}{\Sigma}
\newcommand{\vp}{\varphi}
\def\dis{\displaystyle}
\def\det{\mathop{\rm det}}
\def\min{\mathop{\rm min}}
\newcommand{\ds}{d(x,\s)}
\newcommand{\de}{\delta}
\newcommand{\sing}{\mathrm{Sing}}
\def\det{{\text {\rm det}}}
\def\a {{\alpha}}
\def\d {{\delta}}
\def\vp{{\varphi}}
\title[Characterisations of sufficiency of relative jets]
{Characterisations of $V$-sufficiency and \\
$C^0$-sufficiency of relative jets}
\author{Karim Bekka and Satoshi Koike} 
\address{Institut de recherche Math\'ematique de Rennes, 
Universit\'{e} de Rennes 1, Campus Beaulieu, 35042 Rennes cedex, France}
\address{Department of Mathematics, Hyogo University of Teacher Education,
Kato, Hyogo 673-1494, Japan}
\email{karim.bekka@univ-rennes1.fr}
\email{koike@hyogo-u.ac.jp} 
\subjclass[2010]{Primary 57R45 Secondary 58K40}
\keywords{$V$-sufficiency of jet, $C^0$-sufficiency of jet, 
relative $SV$-determinacy, relative $\mathcal{K}$ equivalence}
\date{\today}
\begin{document}

\thanks{This research is partially supported by the Grant-in-Aid 
for Scientific Research (No. 23540087, 26287011) of Ministry of Education, 
Science and Culture of Japan, and HUTE Short-Term Fellowship
Program 2012 \& 2016.}


\maketitle

\begin{abstract}
We consider the problems of sufficiency of jets relative to a given closed set.
In the non-relative case, criteria for $r$-jets to be $V$-sufficient and 
$C^0$-sufficient in $C^r$ mappings or $C^{r+1}$ mappings have been obtained. 
In particular, it is shown that $V$-sufficiency and $C^0$-sufficiency in 
$C^r$ functions or $C^{r+1}$ functions are equivalent.

In this paper we discuss characterisations of $V$-sufficiency and 
$C^0$-sufficiency in the relative case, corresponding to the above 
non-relative results.
Applying the results obtained in the relative case, we construct examples of 
polynomial functions whose relative $r$-jets are $V$-sufficient in $C^r$
functions and $C^{r+1}$ functions but not $C^0$-sufficient in $C^r$ functions
and $C^{r+1}$ functions, respectively.

In addition, we give characterisations of relative finite $V$-determinacy
and also relative finite $C^r$ contact determinacy.
\end{abstract}

\bigskip

\section{Introduction}

Sufficiency of jets is one of the most important notions introduced by Ren\'e 
Thom for the structural stability theory.
The property of sufficiency of $r$-jet is a kind of
local stability at degree higher than $r$.
Implicit Function Theorem and Morse Lemma may be regarded as results
on sufficiency.
The notion of sufficiency of jets also has applications to the bifurcation 
problems in Differential Equation.
Hence this notion has been explored by many researchers in the 1970s and 
1980s (see C. T. C. Wall \cite{wall} for the survey of this field).

Any mapping realisation of a $C^0$-sufficient jet has an isolated 
singularity, and the zero-set of any mapping realisation of a 
$V$-sufficient jet also has an isolated singularity.
Therefore the above works on sufficiency of jets only deal with 
the isolated singularity case.
On the other hand, the works on characterisations of sufficiency of jets 
relative to a given closed set have been also started, e.g.  V. Grandjean
\cite{grandjean}, S. Izumiya and S. Matsuoka \cite{Izumiya-Matsuoka},
L. Kushner and B. Terra Leme \cite{Kushner-Terra Leme}, V. Thilliez
\cite{Thilliez}, X. Xu \cite{Xu},  P. Migus, T. Rodak and S. Spodzieja \cite{Migus-Rodak-Spodzieja} and so on.
This relative case includes the non-isolated case.
Incidentally, characterisations of $C^0$-sufficiency and $V$-sufficiency of weighted jets have been given by L. Paunescu (\cite{Paunescu1}, \cite{Paunescu2}).

The goal of this paper is to carry on the study of sufficiency of jets of 
differentiable map-germs $(\mathbb{R}^n,0)\to (\mathbb{R}^p,0)$, $n \ge p$, 
possibly with non-isolated singularities. 
We consider the following situation; for a given closed set-germ $\s$ 
in $( \mathbb{R}^n,0)$, we define the notion of map jets relative to $\s$, 
and using the group of homeomorphisms which fixes $\s$ pointwise, we define 
some topological sufficiencies of jets relative to $\s.$
In this paper we mainly treat the problems of $V$-sufficiency and 
$C^0$-sufficiency of relative jets.


Now we describe the plan of the rest of the paper.
The main results in this paper are the characterisations of $V$-sufficiency 
and $C^0$-sufficiency of jets relative to a given closed set.
Therefore let $\s$ be a germ of a closed set at $0 \in \R^n$ such that 
$0 \in \s,$ as above.

In \S 2 we first introduce the notion of jet relative to $\s.$
In the non-relative case, namely in the case where $\s=\{ 0\}$
any $r$-jet, $r \in \N$, has a unique polynomial representative of degree
not exceeding $r$. 
Therefore an $r$-jet can be identified with such a polynomial representative. 
But in the relative case some jets do not have even an analytic realisation 
(cf. Remark \ref{realisation}(1)). 
Therefore, when we consider the problem of sufficiency of relative jets 
in the general setting, we cannot use the analytic method like 
the curve selection lemma.
Note that the non-relative case is a special case of the analytic setting, 
namely $\s$ is a subanalytic closed set-germ and any relative $r$-jet 
has a subanalytic $C^r$ or $C^{r+1}$ realisation 
(c.f. H. Hironaka \cite{hironaka} for subanalyticity).
Subsequently we define the notions of $\s$-$C^0$-sufficiency, 
$\s$-$SV$-sufficiency and $\s$-$V$-sufficiency of relative jets,
and give the formulations of their criteria in the relative case.
Then we explain the role of the Bochnak-Lojasiewicz inequality 
(\cite{bochnaklojasiewicz}) in the non-relative case.
We also mention two important tools to characterise sufficiency of 
relative jets, integrability and the Bochnak-Kuo Lemma 
(\cite{bochnakkuo}).
At the end of this section, we introduce the Kuo quantity and the Thom 
quantity which have strong relationship with criteria for sufficiency
of jets.

The criteria for $C^0$-sufficiency of $r$-jets in $C^r$ functions and in
$C^{r+1}$ functions are the Kuiper Kuo condition and the second Kuiper-Kuo
condition, respectively, in the non-relative case.
J. Bochnak and W. Kucharz proved in \cite{bochnakkucharz} the corresponding
results in the mapping case to the above ones.
The main results of \S 3 are the generalisations of these results on 
$\s$-$C^0$-sufficiency of relative jets in $C^r$ mappings and $C^{r+1}$ 
mappings (Theorems \ref{RelativeKuiperKuo0}, \ref{RelativeKuiperKuo1}).

One of the main results of \S 4 is a characterisation of $\s$-$V$-sufficiency 
of relative $r$-jets in $C^r$ mappings : $(\R^n,0) \to (\R^p,0)$, $n \ge p$,
using the relative Kuo condition.
We give such a characterisation in the general setting (Theorem 
\ref{RelativeKuoThm1}) when $n > p$, and give it in the analytic setting 
when $n = p$ (Theorem \ref{RelativeKuoThm2}).
Using this result and the criterion of $\s$-$C^0$-sufficiency in $C^r$ mappings
in \S 3, we construct an example of a polynomial function to show that
$\s$-$V$-sufficiency in $C^r$ functions does not always imply 
$\s$-$C^0$-sufficiency in $C^r$ functions 
(Example \ref{nonequivalentexample1}).
From this example, we can see that the Bochnak-Lojasiewicz inequality 
does not always hold in the relative case even if $f$ is a polynomial 
function and $\s$ is a line (see Remark \ref{rmkBochnakLojasiewicz} also).
Another main result of \S 4 is a sufficient condition for the relative
$r$-jets to be $\s$-$V$-sufficient in $C^{r+1}$ mappings 
(Theorem \ref{RelativeKuoThm3}).
Using this result and the criterion of $\s$-$C^0$-sufficiency in $C^{r+1}$ 
mappings in \S 3, we construct an example of a polynomial function to show 
that $\s$-$V$-sufficiency in $C^{r+1}$ functions does not always imply 
$\s$-$C^0$-sufficiency in $C^{r+1}$ functions 
(Example \ref{nonequivalentexample3}).
As mentioned in the Abstract, $C^0$-sufficiency of $r$-jets in $C^r$ 
functions (resp. $C^{r+1}$ functions) is equivalent to $V$-sufficiency 
in $C^r$ functions (resp. $C^{r+1} $functions) in the non-relative case. 
But these equivalences do not always hold in the relative case. 
In this sense, Examples \ref{nonequivalentexample1} and 
\ref{nonequivalentexample3} are interesting applications of our main 
results.

In \cite{bochnakkuo} J. Bochnak and T.-C. Kuo gave characterisations of
finite $V$-determinacy for $C^{\infty}$ map-germs using $C^r$-rigidity
and ellipticity of some ideal.
Under the assumption that $\s$ is coherent (see Definition \ref{coherent}),
we generalise the Bochnak-Kuo theorem to the relative case
(Theorems \ref{T1}, \ref {T2}) in \S 5.

In \S 6 we deal with the contact equivalence in the relative case, 
generalising the main result of H. Brodersen \cite{brodersen}.
We study the relative contact equivalence, and show the equivalence 
for $C^{\infty}$ map-germs of infinite $\s$-contact determinacy 
and finite $\s$-contact determinacy ({Theorem \ref{thmequiv}).

\vspace{2mm}

Throughout this paper, let us denote by $\mathbb{N}$ the set of natural 
numbers in the sense of positive integers.


\section{Preliminaries}


\subsection{Definitions}   
 
 Let ${\mathcal E}_{[s]}(n,p)$ denote the set of
$C^s$ map-germs : $(\R^n,0)\to (\R^p,0)$, let $j^r f(0)$ denote the r-jet of 
$f$ at $0 \in \R^n$ for $f \in {\mathcal E}_{[s]}(n,p)$ ($s \ge r$),
and let $J^r(n,p)$ denote the set of r-jets in ${\mathcal E}_{[s]}(n,p)$.

Throughout this paper, let $\s$ be a germ of a closed subset of $ \R^n$ 
at $0 \in \R^n$ such that $0 \in \s.$  
Then we denote by $\mathcal{R}_\s^{\text{fix}}$ the group of germs of homeomorphisms  
$\vp:(\R^n,0) \to (\R^n,0) $ at $0 \in \R^n$ which fixes $\s$, namely
$\vp(x) = x$ for all $x\in \s.$ 
Finally we denote by $d(x,\s)$ the distance from a point $x \in \R^n$ 
to the subset $\s.$
 
 We consider on ${\mathcal E}_{[s]}(n,p)$ the following equivalence relation: 

\vspace{1mm}

\noindent Two map-germs  $f,g\,\in {\mathcal E}_{[s]}(n,p)$ are 
$r$-$\Sigma$-{\em equivalent}, denoted by $f\sim g$, if there exists 
a neighbourhood $U$ of $0$ in $\R^n$ such that the r-jet extensions of 
$f$ and $g$ satisfy $j^rf(\s\cap U)= j^rg(\s\cap U).$

\vspace{1mm}

\noindent We denote by $j^rf(\s;0)$ (or simply $j^rf(\s)$) the equivalence 
class of $f,$ and by $J^r_{\s}(n,p)$ the quotient set 
${\mathcal E}_{[s]}(n,p)/\sim.$ 

\begin{rmk}\label{realisation}
(1) In the case where $\s = \{ 0 \}$, an $r$-jet $j^rf(0)$ has 
a polynomial realisation for any $f \in {\mathcal E}_{[r]}(n,p)$.
But this property does not always hold in the relative case.
In fact, let $f : (\R,0) \to (\R,0)$ be a $C^{\infty}$ function 
defined by

\vspace{3mm}

$f(x) := \begin{cases}
e^{-\frac{1}{x^2}}\sin \frac{1}{x} & \mathrm{ if }\ x \ne 0\\
0 &  \mathrm{ if }\ x = 0.
\end{cases}$

\vspace{3mm}

\noindent Let $\s = \{ \frac{2}{m\pi} \ | \ m \in \mathbb{N}\} 
\cup \{ 0 \}$.
Then $f(\frac{2}{m\pi}) = 0$ for even $m$, but 
 $f(\frac{2}{m\pi}) \ne 0$ for odd $m$.
Therefore, for any $r \in \mathbb{N}$, $j^rf(\s ; 0)$ does not
have even a subanalytic $C^r$-realisation.

(2) Let $f \in {\mathcal E}_{[r]}(n,p)$, and let $\s$ be a germ 
of a closed subset of $\R^n$ at $0 \in \R^n$ such that $0 \in \s$.
Then $j^rf(\s ; 0)$ has a $C^r$-realisation $\tilde{f}$ whose
restriction to $\R^n \setminus \s$ is smooth, namely of class
$C^{\infty}$ (Theorem 2.2, page 73 in J.-C. Tougeron \cite{tougeron}).
\end{rmk}
Let us introduce some equivalences for elements of ${\mathcal E}_{[s]}(n,p)$.
\begin{defn} 
(1) We say that $f,g\,\in {\mathcal E}_{[s]}(n,p)$ are 
$\s$-$C^0$-{\em equivalent}, if there is $\vp \in \mathcal{R}_\s^{\text{fix}}$ 
such that $f=g\circ \vp$. 

(2) We say that  $f,g\,\in {\mathcal E}_{[s]}(n,p)$ are 
$\s$-V-{\em equivalent}, if $f^{-1}(0)$ is homeomorphic to $g^{-1}(0)$
as germs at $0\in \mathbb{R}^n$ by a homeomorphism which fixes 
$f^{-1}(0)\cap \s$.

(3) We say that $f,g\,\in {\mathcal E}_{[s]}(n,p)$ are 
$\s$-SV-{\em equivalent}, if there is a local homeomorphism 
$\vp \in \mathcal{R}_\s^{\text{fix}}$ such that $\vp (f^{-1}(0))=g^{-1}(0)$. 
\end{defn} 

Let $w \in J^r_{\s}(n,p).$ 
We call the relative jet $w$ $\s$-$C^0$-{\em sufficient}, 
$\s$-V-{\em sufficient,} and $\s$-SV-{\em sufficient} in 
${\mathcal E}_{[s]}(n,p)$ $(s\geq r)$, if any two realisations
$f$, $g\,\in {\mathcal E}_{[s]}(n,p)$ of $w,$ 
namely $j^rf(\s;0) = j^rg(\s;0)=w,$  are $\s$-$C^0$-equivalent, 
$\s$-V-equivalent, and $\s$-SV-equivalent, respectively.


We prepare some notations.

\begin{defn}
Let  $f,g : U \to \R$ be non-negative functions,
where $U \subset \R^N$ is an open neighbourhood of $0 \in \R^N$.
If there are real numbers $K > 0$, $\delta > 0$ 
with $B_{\delta}(0) \subset U$ such that
$$
f(x ) \le K g(x ) \ \ \text{for any} \ \ x \in B_{\delta}(0),
$$
where $B_{\delta}(0)$ is a closed ball in $\R^N$ of radius $\delta$
centred at $0 \in \R^N$, 
then we write $f \precsim g$ (or $g \succsim f$).
If $f \precsim g$ and $f \succsim g$, we write $f \thickapprox g$.
\end{defn}

The following lemma is useful in establishing many of the results in this paper.

\begin{lem}\label{lemrflat}
Let $\s$ be a germ at $0\in \mathbb{R}^n$ of a closed subset, and let
$f: ( \mathbb{R}^n,0)\to (\mathbb{R}^p,0)$ be a $C^k$ map-germ, $k\geq 1,$
such that $j^kf(\s;0)=\{0\}.$ Then
$
\|f(x)\|=o(d(x,\s)^k).
$
If  moreover $ f$ is of classe $ C^{k+1}$, then 
$
\|f(x)\| \precsim d(x,\s)^{k+1}.
$
\end{lem}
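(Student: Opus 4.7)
The plan is to reduce both estimates to Taylor's theorem expanded at a nearest point of $\s$. Closedness of $\s$ together with $0 \in \s$ guarantees that for $x$ sufficiently close to $0$ there exists $a = a(x) \in \s$ realising $\|x-a\| = d(x,\s)$; since $\|a\| \le \|a-x\| + \|x\| \le 2\|x\|$, this $a$ remains inside any fixed compact neighbourhood $K$ of $0$. The hypothesis $j^kf(\s;0) = \{0\}$ translates into the pointwise statement $D^\alpha f(a) = 0$ for every $a \in \s \cap K$ (after possibly shrinking $K$) and every multi-index $\alpha$ with $|\alpha| \le k$.

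Assuming first only $f \in C^k$, the integral form of Taylor's theorem at $a$, combined with the vanishing of all partials of order $\le k$ at $a$, reduces to
\begin{equation*}
f(x) = k \sum_{|\alpha| = k} \frac{(x-a)^\alpha}{\alpha!} \int_0^1 (1-t)^{k-1} D^\alpha f\bigl(a + t(x-a)\bigr)\, dt.
\end{equation*}
Each $D^\alpha f$ with $|\alpha| = k$ is continuous on $K$ and vanishes on $\s \cap K$, hence by uniform continuity on $K$ there is, for every $\epsilon > 0$, some $\delta > 0$ such that $|D^\alpha f(y)| < \epsilon$ whenever $y \in K$ and $d(y,\s) < \delta$. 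For $x$ small the whole segment $a + t(x-a)$, $t \in [0,1]$, lies within distance $\|x-a\| = d(x,\s)$ of $a \in \s$, so the integrand is bounded by $\epsilon$ and we obtain $\|f(x)\| \le C_n\epsilon\, d(x,\s)^k$. This proves $\|f(x)\| = o(d(x,\s)^k)$.

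When $f \in C^{k+1}$, Taylor's theorem one order higher at $a$ leaves only the order-$(k+1)$ integral remainder,
\begin{equation*}
f(x) = (k+1)\sum_{|\alpha| = k+1} \frac{(x-a)^\alpha}{\alpha!} \int_0^1 (1-t)^k D^\alpha f\bigl(a + t(x-a)\bigr)\, dt,
\end{equation*}
and now the $(k+1)$-th partials are merely continuous, hence uniformly bounded by some $M$ on $K$. A direct estimate then yields $\|f(x)\| \le M'\,d(x,\s)^{k+1}$, i.e.\ $\|f(x)\| \precsim d(x,\s)^{k+1}$. The only mild obstacle is the uniformity of the remainder over the base point $a$, which is precisely why I rely on the integral form rather than the Peano form of Taylor's theorem: the integral form reduces everything to uniform continuity (respectively boundedness) of the top-order derivatives on the compact set $K$. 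Note that continuity of the nearest-point projection is never used; only the existence of a nearest point, which is automatic from closedness of $\s$.
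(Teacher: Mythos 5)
Your proof is correct and follows essentially the same route as the paper: Taylor's formula with integral remainder expanded at a point of $\s$ (the paper at an arbitrary $y\in\s$ near $0$ followed by an infimum, you at a nearest point $a(x)$), with the jet hypothesis killing the polynomial part and compactness giving the required uniformity of the remainder estimate over base points. The only cosmetic difference is that you exploit the vanishing of the order-$k$ derivatives on $\s$ directly inside the integrand, whereas the paper keeps the difference $D^k f(y+t(x-y))-D^k f(y)$ and invokes a uniform modulus of continuity; these are the same argument.
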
 

\begin{proof}
It is a consequence of the Taylor formula for $C^k$ mapping and the 
assumption on $f.$ 
Let $\delta > 0$ and $y \in B(0,\delta ) \cap \s.$ 
By the $k$th order Taylor formula  we have
$$
f(x)=\sum_{j=0}^{k}\frac{(D^{j}f)(y)}{j!}((x-y)^{(j)})+R_{k,y}(x-y)
$$
for $ x\in B(0,r)$, where
$$
R_{k,y}(x-y)=\int_{0}^{1}\frac{(1-t)^{k-1}}{(k-1)!}((D^{k}f)(y+t(x-y))-(D^{k}f)(y))((x-y)^{(k)})\, dt
$$
satisfies
$
\Vert R_{k,y}(x-y)\Vert\leq C_{k,h,y}\Vert x-y\Vert^{k},\ \lim_{x-y\rightarrow 0}C_{k,x-y,y}=0
$
with,
$
C_{k,x-y,a}=\sup_{t\in[0,1]}\frac{\Vert(D^{k}f)(y+t(x-y))-(D^{k}f)(y)\Vert}{k!}.
$
The convergence $C_{k,h,y}\rightarrow 0$  as $h\rightarrow 0$  is uniform for $y$ supported in a compact subset of $B(0,\delta )$.
Where  $h^{(j)}= (h,\ \ldots,\ h)\in (\mathbb{R}^n)^{j}$ and 
$$
(D^{j}f)(y)(h,\ \ldots,\ h)=\sum_{(i_{1},\ldots,i_{j})}h_{i_{1}}\cdots h_{i_{j}}\frac{\partial^{k}f}{\partial x_{i_{1}}\cdots\partial x_{i_{j}}}(\alpha).
$$
 Now, if $j^kf(\s;0)=\{0\}$,   we have $
\dis \|f(x)\|\leq C_{k,h,y}\Vert x-y\Vert^{k}
$ and taking the infimum on $y\in \s$, we obtain $\|f(x)\|=o(d(x,\s)^k).$

Moreover, if $f$ is of classe $C^{k+1}$, taking the $(k+1)$th order Taylor formula, and the infimum on $y\in \s$, we get easily $
\|f(x)\| \precsim d(x,\s)^{k+1}.
$
\end{proof} 

\subsection{Relative Kuiper-Kuo condition and relative Kuo condition}

We suppose now on the germ $\s$ fixed, and introduce the relative 
notions to $\s$ of the Kuiper-Kuo condition and the Kuo condition.
We first give the notion of the relative Kuiper-Kuo condition.
The original condition was introduced by N. Kuiper \cite{kuiper}
and T.-C. Kuo \cite{kuo1} as a sufficient condition of
$C^0$-sufficiency of jets in the function case.

Let $v_1, \cdots , v_p$ be $p$ vectors in $\R^n$ where $n \ge p$.
The {\em Kuo distance $\kappa$} (\cite{kuo3}) is defined by
$\dis
\kappa(v_1, \ldots,v_p) = \displaystyle \min_{i}\{\text{distance of }\, 
v_i\, \text{ to }\, V_i\},
$
where $V_i$ is the span of the $ v_j$'s, $j\ne i$.
In the case where $p = 1$, $\kappa (v) = \| v \| .$

\begin{defn}[The relative Kuiper-Kuo condition]\label{RKK}  
A map germ $f\in {\mathcal E}_{[r]}(n,p)$, $n\geq p$, satisfies the 
{\it relative  Kuiper-Kuo condition $(K$-$K_{\s})$} if 
\begin{equation*}
\kappa(df(x))\succsim d(x,\s)^{r-1} 
\end{equation*} 
holds in some neighbourhood of $0 \in \R^n.$
\end{defn} 

\begin{defn}[The second relative Kuiper-Kuo condition]\label{SRKK}  
A map germ $f\in {\mathcal E}_{[r]}(n,p)$, $n\geq p$, satisfies the 
{\it second relative  Kuiper-Kuo condition $(K$-$K_{\s}^{\delta})$} 
if there is a strictly positive number $\delta$ such that
\begin{equation*}
\kappa(df(x))\succsim d(x,\s)^{r-\delta} 
\end{equation*} 
holds in some neighbourhood of $0 \in \R^n.$
\end{defn} 

For a map germ $f \in {\mathcal E}_{[r]}(n,p),$ we denote by $\sing (f)$ 
the singular points set of $f.$

\begin{rmk}\label{rmkRelKuiperKuo}
For a  map $f\in {\mathcal E}_{[r]}(n,p)$ satisfying the relative  Kuiper-Kuo 
condition or the second relative Kuiper-Kuo condition, 
we have $\sing(f) \subset \s$ in a neighbourhood of $0 \in \R^n.$
Therefore these conditions include the case where $\s = \sing (f)$,
as a special case.
\end{rmk} 

We next give the notion of the relative Kuo condition.
The original condition was introduced by T.-C. Kuo \cite{kuo3}
as a criterion of $V$-sufficiency of jets in the mapping case.

\begin{defn}[The relative  Kuo condition]\label{K}  
A map germ $f\in {\mathcal E}_{[r]}(n,p)$, $n\geq p$, satisfies the 
{\it relative  Kuo condition $(K_{\s})$} if there are strictly positive 
numbers $C, \alpha$ and $ \bar w$ such that
\begin{equation*}
\kappa(df(x))\geq Cd(x,\s)^{r-1} \text{ in } 
{\mathcal H}^{\s}_{r}(f; \bar w)\cap \{\| x \| < \alpha\},
\end{equation*} 
 $$\text{ namely, } \kappa(df(.))\succsim d(.,\s)^{r-1} \text{ on a set of points where }  \| f \| \precsim \ d(.,\s)^{r}.$$

\end{defn} 
In the definition \ref{K}, ${\mathcal H}^{\s}_{r}(f;\bar w)$ denotes the 
{\em horn-neighbourhood of $f^{-1}(0)$ relative to $\s$ of degree $r$ and 
width $\bar{w}$},
$
{\mathcal H}^{\s}_{r}(f;\bar w)=\{x\in \mathbb{R}^n: \| f(x) \|
\leq\bar w\ d(x,\s)^{r}\}.
$\\
The original notion of this horn-neighbourhood was introduced 
in \cite{kuo2}, for $\s=\{0\}$.

We have also a variant of the previous condition:
\begin{defn}[The second relative Kuo condition]\label{Kd}  
A map germ $f\in {\mathcal E}_{[r+1]}(n,p)$, $n\geq p$, satisfies the 
{\it second relative  Kuo condition $(K_{\s}^\de)$} if for any 
map $g\in {\mathcal E}_{[r+1]}(n,p)$ 
satisfying $j^{r}g(\s;0)=j^{r}f(\s;0)$ there are numbers
$C, \alpha,\delta$ and $ \bar w$ (depending on $g$), such
that
\begin{equation*}
\kappa(df(x))\geq Cd(x,\s)^{r-\delta} \text{ in }  \mathcal{H}^{\s}_{r+1}(g;\bar w)\cap\{\|x\|<\a\},
\end{equation*} 
namely, $\kappa(df(.))\succsim d(.,\s)^{r-\delta} $
on a set of points where $\| g(.) \| \precsim \ d(.,\s)^{r+1}.$

\end{defn} 
\begin{rmk}\label{rmk2ndRelativeKuo}
\begin{enumerate}[(1)]
\item For a  map $f\in {\mathcal E}_{[r]}(n,p)$ satisfying the  relative  Kuo condition or the second relative  Kuo condition, in a neighbourhood of $0 \in \R^n,$ the intersection of the singular set of $f,$ $\sing (f),$ and the horn 
neighbourhood
${\mathcal H}^{\s}_{r}(f; \bar w)$ is contained in $\s$, namely 
$$
\sing(f)\cap {\mathcal H}^{\s}_{r}(f; \bar w)\subset \s.
$$
In particular, in a neighbourhood of $0 \in \R^n$,  
$\mathrm{grad}f_{1}(x),\ldots,\mathrm{grad}f_{p}(x) $ are linearly independent  on $f^{-1}(0)\setminus \s.$ 

\item  For a  map $f\in {\mathcal E}_{[r]}(n,p)$ satisfying  the second relative  Kuo,  we have  for any 
map $g\in {\mathcal E}_{[r+1]}(n,p)$ satisfying $j^{r}g(\s;0)=j^{r}f(\s;0)$, in a neighbourhood of $0 \in \R^n,$ the intersection of the singular set of $f,$ $\sing (f),$ and the horn neighbourhood
${\mathcal H}^{\s}_{r+1}(g; \bar w)$ is contained in $\s$, namely 
$
\sing(f)\cap {\mathcal H}^{\s}_{r+1}(g; \bar w)\subset \s.
$
Since   $\|(f-g)(x)\|\precsim d(x,\s)^{r+1},$ we have $f^{-1}(0)\subset {\mathcal H}^{\s}_{r+1}(g; \bar w) ,$
then, in a neighbourhood of $0 \in \R^n$,  
$\mathrm{grad}f_{1}(x),\ldots,\mathrm{grad}f_{p}(x) $ are linearly independent  on $f^{-1}(0)\setminus \s.$ 

\end{enumerate}

\end{rmk} 

\begin{defn}[Condition ($\widetilde{K}_{\s}$)]
A map germ $f\in {\mathcal E}_{[r]}(n,p)$, $n\geq p$, satisfies
{\it condition} ($\widetilde{K}_{\s}$) if 
\begin{equation*}
d(x,\s)\kappa(df(x))+\|f(x)\|\succsim d(x,\s)^{r} 
\end{equation*} 
holds in some neighbourhood of $0 \in \R^n.$
\end{defn}
\begin{rmk}\label{remark210}
\begin{enumerate}[(1)]
\item Condition ($\widetilde{K}_{\s}$) was introduced in \cite{bekkakoike1}, 
in the case $\s=\{0\}$, in the proof of the equivalence between 
$V$-sufficiency and $SV$-sufficiency.
\item It is easy to see that condition ($\widetilde{K}_{\s}$) and the relative 
Kuo condition ($K_{\s}$) are equivalent.
\item The relative Kuiper-Kuo condition $(K$-$K_{\s}),$ the relative Kuo 
condition $(K_{\s})$, and condition ($\widetilde{K}_{\s}$) are invariant 
under rotation.
\end{enumerate}
\end{rmk} 
\begin{defn}[Condition ($\widetilde{K}^\d_{\s}$)]
A map germ $f\in {\mathcal E}_{[r+1]}(n,p)$, $n\geq p$, satisfies
{\it condition ($\widetilde{K}_{\s}^\d$})
if for any 
map $g\in {\mathcal E}_{[r+1]}(n,p)$ 
satisfying $j^{r}g(\s;0)=j^{r}f(\s;0)$ there exists 
$\delta>0$  (depending on $g$), such
that
\begin{equation*}
d(x,\s)\kappa(df(x))+\|g(x)\|\succsim d(x,\s)^{r+1-\d} 
\end{equation*} 
holds in some neighbourhood of $0 \in \R^n.$

\end{defn}

\begin{rmk}\label{remark211}
\begin{enumerate}[(1)]
\item The second relative Kuiper-Kuo condition $(K$-$K_{\s}^{\delta}),$ 
the second relative Kuo condition ($K^\d_{\s}$), and condition 
($\widetilde{K}^\d_{\s}$) are invariant under rotation.
\item Condition ($\widetilde{K}^\d_{\s}$) can be equivalently written as:
for any 
map $g\in {\mathcal E}_{[r+1]}(n,p)$ 
satisfying $j^{r}g(\s;0)=j^{r}f(\s;0)$ there exists 
$\delta>0$  (depending on $g$), such
that
\begin{equation*}
d(x,\s)\kappa(dg(x))+\|g(x)\|\succsim d(x,\s)^{r+1-\d} 
\end{equation*} 
holds in some neighbourhood of $0 \in \R^n.$

\end{enumerate}

\end{rmk}


\subsection{Bochnak-Lojasiewicz inequality}

Let us explain the role of the Bochnak-Lojasiewicz 
inequality playing in the problem of sufficiency of jets 
in the non-relative, function case.
Let $f : (\R^n ,0) \to (\R,0)$ be a $C^r$ function germ.
The $r$-jet of $f$ at $0 \in \R^n$, $j^r f(0)$, has a unique polynomial
representative $z$ of degree not exceeding $r$.
We do not distinguish the $r$-jet $j^r f(0)$ and the polynomial 
representative $z$ here.

\vspace{3mm}

\noindent {\bf Kuiper-Kuo condition.} There is a strictly positive number
$C$ such that
\begin{equation*}
\| \grad z(x) \| \ge C \| x\|^{r-1} 
\end{equation*} 
holds in some neighbourhood of $0 \in \R^n.$

\vspace{3mm}

The Kuiper-Kuo condition is equivalent to the $C^0$-sufficiency of $z$
in $C^r$ functions (N. Kuiper \cite{kuiper}, T.-C. Kuo \cite{kuo1}, 
J. Bochnak and S. Lojasiewicz \cite{bochnaklojasiewicz}).

\vspace{3mm}

\noindent {\bf Kuo condition.} There are strictly positive 
numbers $C, \alpha$ and $ \bar w$ such that
\begin{equation*}
\| \grad z(x) \| \ge C \| x\|^{r-1} \text{ in } {\mathcal H}^{\s}_{r}(f; \bar w)\cap \{\| x \| < \alpha\}.
\end{equation*} 

The Kuo condition is equivalent to the $V$-sufficiency of $z$
in $C^r$ functions (T.-C. Kuo \cite{kuo3}).

\vspace{3mm}

\noindent {\bf Condition ($\widetilde{K}$).}
There is a strictly positive number $C$ such that
\begin{equation*}
\| x\| \| \grad z(x) \| + |f(x)| \ge C \| x\|^r
\end{equation*} 
holds in some neighbourhood of $0 \in \R^n.$

\vspace{3mm}

This condition is the Kuo condition in a different way. 
Now, we recall the Bochnak-Lojasiewicz inequality .

\vspace{3mm}

\noindent {\bf Bochnak-Lojasiewicz inequality.} Let $f : (\R^n,0) \to
(\R ,0)$ be a $C^{\omega}$ function germ, and let $0 < \theta < 1.$
Then 
\begin{equation*} 
\| x \| \| \grad f(x)\| \ge \theta |f(x)|
\end{equation*} 
holds in some neighbourhood of $0 \in \R^n.$

\vspace{3mm}

From this inequality, it follows that the Kuo condition, or in fact 
condition ($\widetilde{K}$) is equivalent 
to the Kuiper-Kuo condition in the analytic case.
Therefore we can see that $V$-sufficiency in $C^r$ functions is equivalent 
to $C^0$-sufficiency in $C^r$ functions.

The Kuiper-Kuo condition, the Kuo condition and condition ($\widetilde{K}$)
are $r$-compatible in the sense of \cite{bekkakoike1}.
Therefore we can replace $z$ with $f$ in those conditions.

In a similar way to the $C^r$ case, we can see that $V$-sufficiency in 
$C^{r+1}$ functions is equivalent to $C^0$-sufficiency in $C^{r+1}$ 
functions, using the Bochnak-Lojasiewicz inequality.


\subsection{Integrability}

The standard condition for proving local integrability of vector fields is the 
Lipschitz condition. 
We shall use a more general controllability condition which yields the 
Lipschitz equivalence or even $C^k$-equivalence as a special case 
(see for instance T.-C. Kuo \cite{kuo1}, \cite{kuo3}, N. Kuiper \cite{kuiper},
F. Takens \cite{takens}, for the isolated singular case and E. Looijenga \cite{looijenga}, J. Damon \cite{damon} for family of isolated singularities). 

Let $\s$ be a germ of closed set of $ \mathbb{R}^n$ such that $0\in \s.$ 
Let $G$ be a  germ of $C^1$ vector field on $ \mathbb{R}^n\times \mathbb{R}^m\setminus \s\times \mathbb{R}^m$ which satisfies the {\it relative
Lipschitz condition:} 
$
\displaystyle \Vert G(x,\ t)\Vert\leq C d( x, \s)
$
where $ d( x, \s)$ denotes the distance of the point $x$ to $\s.$

For a fixed vector $v \in \{0\}\times\mathbb{R}^{m},$ we define
$X(x,t) := \begin{cases}
G(x,t)+v & \mathrm{ if }\ x \notin \s\\
v &  \mathrm{ if }\ x\in \s.
\end{cases}$

\vspace{3mm}

\noindent Then we have the following proposition.

\begin{prop}\label{integral}
 For $G(x,t)$  satisfying  the preceding conditions, $X(x,t)$  is locally integrable in the sense that there are a neighbourhood $W$ of $(x_{0},t_{0})$  in $ \mathbb{R}^{n}\times \mathbb{R}^m$,  $\delta>0$, and a family of homeomorphisms $\phi_{s}(x,t)$ defined on $W$  for $|s|<\delta$ so that $\phi_{0}=id$ and for $(x,t,s)\in W \times(-\delta,\ \delta)$,
$\dis
\frac{\partial\phi_{s}}{\partial s}= X\circ\phi_{s}.
$
\end{prop}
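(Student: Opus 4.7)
The plan is to build $\phi_s$ piecewise on and off $\Sigma\times\mathbb{R}^m$ and then show that the two constructions glue into a single homeomorphism thanks to the relative Lipschitz bound $\|G\|\le C\,d(x,\Sigma)$. Off $\Sigma\times\mathbb{R}^m$ the vector field $X=G+v$ is $C^1$, so the classical Picard--Lindel\"of theorem furnishes a $C^1$ local flow $\phi_s$ on any small open set whose closure is disjoint from $\Sigma\times\mathbb{R}^m$, defined for $|s|<\delta$. On $\Sigma\times\mathbb{R}^m$ the vector field equals the constant $v\in\{0\}\times\mathbb{R}^m$, which is tangent to this stratum; here one simply declares $\phi_s(x,t):=(x,t+sv)$ for $x\in\Sigma$.

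The central estimate is a Gronwall argument. If $(x(s),t(s))$ is an integral curve of $X$ starting off $\Sigma$, then since $v$ has zero $x$-component one has $\|\dot x(s)\|\le C\,d(x(s),\Sigma)$, and combining this with $|d(x(s),\Sigma)-d(x(0),\Sigma)|\le\int_0^s\|\dot x(u)\|\,du$ Gronwall yields
\[
e^{-C|s|}d(x(0),\Sigma)\le d(x(s),\Sigma)\le e^{C|s|}d(x(0),\Sigma).
\]
In particular, trajectories starting off $\Sigma$ remain off $\Sigma$ for all $|s|<\delta$, so the ODE has a unique solution there and the off-$\Sigma$ flow is well defined. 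The same estimate applied to any hypothetical solution starting on $\Sigma$ and leaving it forces $d(x(s),\Sigma)\equiv 0$; combined with $\dot t=v$ on $\Sigma$, the unique solution issuing from $(x_0,t_0)\in\Sigma\times\mathbb{R}^m$ must be $(x_0,t_0+sv)$. The two pieces therefore assemble into a well-defined set-theoretic map $\phi_s\colon W\to\mathbb{R}^n\times\mathbb{R}^m$ on a neighbourhood $W$ of $(x_0,t_0)$.

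The principal work, which I expect to be the main technical obstacle, is the continuity of $\phi_s$ across $\Sigma\times\mathbb{R}^m$. For a sequence $(x_n,t_n)\to(x_0,t_0)$ with $x_0\in\Sigma$ and $x_n\notin\Sigma$, writing $\phi_s(x_n,t_n)=(x^{(n)}(s),t^{(n)}(s))$, the preceding bound together with $\|\dot x^{(n)}\|\le C\,d(x^{(n)},\Sigma)\le C\,e^{C|s|}d(x_n,\Sigma)$ gives $\|x^{(n)}(s)-x_n\|\le C_1\,d(x_n,\Sigma)\to 0$, and the analogous bound $\|\dot t^{(n)}-v\|=\|G_t(x^{(n)},t^{(n)})\|\le C\,d(x^{(n)},\Sigma)$ yields $\|t^{(n)}(s)-t_n-sv\|\to 0$. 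Hence $\phi_s(x_n,t_n)\to(x_0,t_0+sv)=\phi_s(x_0,t_0)$, so $\phi_s$ is continuous on $W$. The composition $\phi_{-s}\circ\phi_s=\mathrm{id}$ holds off $\Sigma$ by the uniqueness half of Picard--Lindel\"of and on $\Sigma$ by direct inspection, hence on all of $W$ by continuity, which makes $\phi_s$ a homeomorphism with inverse $\phi_{-s}$. The uniform control of the various Gronwall constants and existence times on a single neighbourhood $W$ and interval $(-\delta,\delta)$ is handled by first fixing bounds for $\|G\|$ and its first derivatives on a compact subset of the complement of $\Sigma\times\mathbb{R}^m$ close to $(x_0,t_0)$.
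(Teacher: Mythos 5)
Your proposal is correct and takes essentially the same route as the paper: define the flow by the classical theory off $\Sigma\times\mathbb{R}^m$ and by translation $(x,t)\mapsto(x,t+sv)$ on it, control $d(x(s),\Sigma)$ by an exponential Gronwall bound (the paper's Lemma \ref{lemintegral}), deduce continuity across $\Sigma\times\mathbb{R}^m$ from the relative Lipschitz bound, and obtain the inverse from the flow of $-X$ together with uniqueness. Your derivation of the Gronwall estimate via the $1$-Lipschitz property of $d(\cdot,\Sigma)$ and the explicit limit $\|x^{(n)}(s)-x_n\|\precsim d(x_n,\Sigma)\to 0$ simply spell out details the paper states tersely, and they are sound.
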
 

\begin{lem}\label{lemintegral}
Let $U$ be an open subset of $\R^n\setminus \s$, 
let $0 \in (a,\ b)$ and let $G: U\times (a,\ b)\rightarrow \mathbb{R}^{n}$ 
be a continuous mapping which satisfies
$$
\displaystyle \Vert G(x,\ t)\Vert\leq C d( x, \s)
$$  
for some $C>0$ and  $(x,\ t)\in U\times (a,\ b)$. 
Let $\varphi$ : $(\alpha,\ \beta)\rightarrow U$  be an integral solution of the system of differential equations $y'=G(y,t)$
with the initial condition $\varphi(0) = x_{0}$ where $x_{0}\in U$
and $0 \in (\alpha ,\ \beta )\subset (a,\ b)$.
Then we have
$\dis 
d(x_{0}, \s)e^{-C |t|}\leq d(\varphi(t), \s)\leq d(x_{0}, \s) e^{C|t|}
$
for $t \in (\alpha,\ \beta)$.
\end{lem}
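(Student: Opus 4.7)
The plan is to reduce the claim to a scalar Gronwall inequality for the function $\psi(t) := d(\varphi(t), \s)$ on $(\alpha, \beta)$. Since $\varphi$ takes values in $U \subset \R^n \setminus \s$, one has $\psi(t) > 0$ throughout the interval, which legitimises the eventual passage to the logarithm (or equivalently, to the multiplier $e^{\pm Ct}$).

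First I would use the elementary fact that $x \mapsto d(x, \s)$ is $1$-Lipschitz on $\R^n$, so that
$$|\psi(t+h) - \psi(t)| \leq \|\varphi(t+h) - \varphi(t)\|$$
for all $t, t+h \in (\alpha, \beta)$. Combined with $\varphi'(t) = G(\varphi(t), t)$ and the standing hypothesis $\|G(x,t)\| \leq C\, d(x, \s)$, this yields
$$\limsup_{h \to 0} \frac{|\psi(t+h) - \psi(t)|}{|h|} \leq \|G(\varphi(t), t)\| \leq C\, \psi(t)$$
at every $t \in (\alpha, \beta)$. Because $\varphi$ is $C^1$ and $d(\cdot, \s)$ is $1$-Lipschitz, the composition $\psi$ is locally Lipschitz, hence absolutely continuous on any compact subinterval; consequently the pointwise estimate $|\psi'(t)| \leq C\, \psi(t)$ holds almost everywhere.

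Next I would integrate this differential inequality by a standard Gronwall-type argument. The upper estimate $\psi'(t) \leq C\, \psi(t)$ gives $\frac{d}{dt}\bigl(\psi(t) e^{-Ct}\bigr) \leq 0$ for $t \geq 0$ and $\geq 0$ for $t \leq 0$, so $\psi(t) \leq \psi(0) e^{C|t|}$; the lower estimate $\psi'(t) \geq -C\, \psi(t)$ gives analogously $\psi(t) \geq \psi(0) e^{-C|t|}$. Specialising to $\psi(0) = d(x_0, \s)$ produces exactly the claimed double inequality.

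The only mild obstacle is the lack of classical differentiability of $d(\cdot, \s)$, which prevents a naive chain rule; but the composition $\psi$ inherits Lipschitz regularity from the $1$-Lipschitzness of the distance function, and Rademacher's theorem (or simply the absolute continuity of a Lipschitz function of one real variable) supplies almost-everywhere differentiability, which is all that Gronwall's lemma requires. Everything else is routine integration.
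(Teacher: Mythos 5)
Your argument is correct, but it takes a genuinely different route from the paper's. The paper's proof sets $\rho(t)=\tfrac12\ln\Vert\varphi(t)\Vert^{2}$ (legitimate since $\varphi$ is $C^{1}$ and never vanishes, as $U\subset\R^{n}\setminus\s$ and $0\in\s$), bounds $|\rho'(t)|\le C$ by Cauchy--Schwarz together with $\Vert G(x,t)\Vert\le C\,d(x,\s)\le C\Vert x\Vert$, and concludes via the mean value theorem that $|\rho(t)-\rho(0)|\le C|t|$. That computation controls $\Vert\varphi(t)\Vert$, the distance to the origin, so taken literally it yields the stated conclusion only in the non-relative case $\s=\{0\}$; replacing $\Vert\cdot\Vert$ by $d(\cdot,\s)$ is exactly where classical differentiability fails. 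You instead work directly with $\psi(t)=d(\varphi(t),\s)$: the $1$-Lipschitz property of the distance function plus the $C^{1}$ regularity of $\varphi$ make $\psi$ locally Lipschitz, hence absolutely continuous with $|\psi'(t)|\le C\,\psi(t)$ almost everywhere, and the Gronwall integration then gives the two-sided exponential bound for an arbitrary closed $\s$. So your approach buys exactly what the lemma claims in the relative setting, at the modest price of invoking a.e.\ differentiability instead of the smooth chain rule, whereas the paper's smooth log-derivative argument is shorter but, as written, proves the estimate for $\Vert\varphi(t)\Vert$ rather than for $d(\varphi(t),\s)$. One small bookkeeping slip on your side: for $t\le 0$ the upper bound $\psi(t)\le\psi(0)e^{C|t|}$ follows from the lower differential inequality $\psi'\ge -C\psi$ (monotonicity of $\psi(t)e^{Ct}$), and the lower bound from $\psi'\le C\psi$; you have the two roles interchanged, but since you establish both halves of $|\psi'|\le C\psi$, the conclusion is unaffected.
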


\begin{proof}
Since $\Vert\varphi(t)\Vert>0$ for $t \in(\alpha,\ \beta)$, 
we can define the function $\rho$ : $(\alpha,\ \beta)\rightarrow \mathbb{R}$ 
by $\displaystyle \rho(t)=\frac{1}{2}\ln\Vert\varphi(t)\Vert^{2}$ 
for $t \in(\alpha,\ \beta)$.
This function is differentiable and
$$
\displaystyle \rho'(t)=\frac{\langle\varphi(t),\varphi'(t)\rangle}
{||\varphi(t)\Vert^{2}}=\frac{\langle\varphi(t),G(\varphi(t),t)\rangle}
{||\varphi(t)\Vert^{2}}
$$
for $t \in(\alpha,\ \beta)$.  
From the mean value theorem, for every $t \in(0,\ \beta)$ there exists 
$\theta\in(0,\ t)$ such that $\rho(t)-\rho(0)=\rho'(\theta)t$.
Then we have
$$
|\rho(t)-\rho(0)|\leq|\rho'(\theta)|t \displaystyle \leq\frac{\Vert\varphi(\theta)\Vert \Vert G(\varphi(\theta),\theta)\Vert}{\Vert\varphi(\theta)\Vert^{2}}t 
\displaystyle \leq Ct.
$$
Therefore for every $t \in(0,\ \beta)$,
$$
\displaystyle \rho(0)-Ct \displaystyle \leq\rho(t)\leq\rho(0)+Ct.
$$
The above inequalities hold also for $t =0$. 
This ends the proof of the lemma.
\end{proof}

\begin{proof}[ Proof of Proposition~\ref{integral}] 
Let  $\gamma(s),$  $|s|< \epsilon,$ be an integral curve of $X$ in 
$(\mathbb{R}^{n}\setminus\s)\times \mathbb{R}^m.$ 
Then, by  Lemma \ref{lemintegral}, $\gamma(s)$ stays within a compact subset 
of $(U\setminus\s)\times \mathbb{R}^{m}$ when $\gamma(0)$ does. 
Thus, together with $\gamma(\mathrm{s})=\gamma(0)+sv$ for 
$\gamma(0)\in \s\times \mathbb{R}^{m},$ we obtain a continuous flow 
$\phi_{\mathrm{s}},\ |s|<\epsilon$ for $\gamma(0)$ in a sufficiently small 
compact neighbourhood of $x_{0}$. 
Thus there is a compact neighbourhood $W$ of $(x_{0},t_{0})$ in 
$ \mathbb{R}^{n+m}$ and a positive number $\delta$ so that the integral curves 
$\gamma(s)$ of $X$ with $\gamma(0)\in W\setminus(\s\times\mathbb{R}^{m})$ 
are defined for $|s|\leqq\delta$ and belong to 
$W\setminus(\s\times\mathbb{R}^{m})$.  
Then, we define
$
\phi(x,t,s):W\times[-\delta,\ \delta]\rightarrow W
$
by $\phi(x,t,s)=\gamma(s)$ where $\gamma$ is the integral curve of $X$ with 
$\gamma(0)=(x,t)$ (if $(x,t)\in \s\times \mathbb{R}^{m},$ then 
$\ \gamma(s)=(x,t) +sv)$ ).
This flow has a continuous inverse (in a smaller neighbourhood) by the same 
argument applied to $-X$ and uniqueness. 
Thus, it is a parametrised family of local homeomorphisms.
\end{proof}


\subsection{Bochnak-Kuo Lemma}

J. Bochnak and T.-C. Kuo proved a lemma in \cite{bochnakkuo}
in order to show a characterisation of finite $V$-determinacy
of map-germs.
Using a similar argument to the lemma, we can show the following 
lemma which will be used to show characterisations of relative
$C^0$-sufficiency of jets and relative $V$-sufficiency of jets.

\begin{lem}\label{bochnakkuo1}
Let $\{u_{\nu}^{(1)},\ \ldots,\ u_{\nu}^{(p)}\}_{\nu\in \mathbb{N}}$ 
{\it be a sequence of} $p$-{\it tuples of vectors in} $\mathbb{R}^{n}$,
and let $s \in \mathbb{N}$.  
Suppose that there is a sequence of positive numbers 
$\alpha_{\nu},$ $ \alpha_{\nu}\rightarrow 0$ such that
$$
d\ (u_{\nu}^{(1)},\sum_{k=2}^p \mathbb{R} u_{\nu}^{(k)})
= o(\alpha_{\nu}^{s}).
$$
{\it Then we can find a sequence} 
$\{\lambda_{\nu}^{(2)},\ \ldots,\ \lambda_{\nu}^{(p)}\}_{\nu\in \mathbb{N}}$ 
{\it of} $(p-1)$-{\it tuples of vectors in} $\mathbb{R}^{n}$, 
{\it satisfying the following three conditions}:

(i)  $|\lambda_{\nu}^{(k)}|=o(\alpha_{\nu}^s),\ 2\leqq k\leqq p$;

(ii) {\it For each} $\nu\in \mathbb{N},\ u_{\nu}^{(2)}+\lambda_{\nu}^{(2)},\ 
\ldots$ , $u_{\nu}^{(p)}+\lambda_{\nu}^{(p)}$ {\it are linearly independent}; 

(iii)  For each $\nu\in \mathbb{N},\ u_{\nu}^{(1)}$ {\it belongs to the 
subspace spanned by} $u_{\nu}^{(k)}+\lambda_{\nu}^{(k)},\ 2\leqq k\leqq p$.
\end{lem}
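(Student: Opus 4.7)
My plan is to construct the perturbations $\lambda_\nu^{(k)}$ by working in the orthogonal complement of $V_\nu := \sum_{k=2}^{p}\mathbb{R}\,u_\nu^{(k)}$ and using the decomposition $u_\nu^{(1)} = v_\nu + w_\nu$ with $v_\nu = P_{V_\nu}(u_\nu^{(1)})$ and $w_\nu \in V_\nu^\perp$; the hypothesis gives $|w_\nu| = d(u_\nu^{(1)},V_\nu) = o(\alpha_\nu^s)$.

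First I would pass to a subsequence so that the rank $q := \dim V_\nu$ is constant and, after re-indexing, $(u_\nu^{(2)},\ldots,u_\nu^{(q+1)})$ is a basis of $V_\nu$ for every $\nu$. In the principal case $q \le p-2$, I would choose orthonormal vectors $e_\nu^{(q+2)},\ldots,e_\nu^{(p)} \in V_\nu^{\perp}$ with $e_\nu^{(q+2)} = w_\nu/|w_\nu|$ whenever $w_\nu \neq 0$, and set $\lambda_\nu^{(k)} := 0$ for $k \le q+1$ and $\lambda_\nu^{(k)} := \epsilon_\nu\,e_\nu^{(k)}$ for $k \ge q+2$, with $\epsilon_\nu := \alpha_\nu^{s+1}$. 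Then (i) is immediate; (ii) follows because the $V_\nu$-components of the new vectors span $V_\nu$ while the $V_\nu^\perp$-components $\{\epsilon_\nu\,e_\nu^{(k)}\}_{k \ge q+2}$ are linearly independent and transverse to $V_\nu$; and (iii) follows because the new span contains both $V_\nu$ and the direction $w_\nu = |w_\nu|\,e_\nu^{(q+2)}$, hence contains $u_\nu^{(1)} = v_\nu + w_\nu$.

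The main obstacle is the full-rank case $q = p-1$, in which $\{q+2,\ldots,p\}$ is empty and the perturbation must rotate the existing hyperplane $V_\nu$ toward $u_\nu^{(1)}$. Here I would expand $v_\nu = \sum_{k=2}^p b_\nu^{(k)} u_\nu^{(k)}$ uniquely and set $\lambda_\nu^{(k)} := t_\nu^{(k)}\,w_\nu$ with scalars $t_\nu^{(k)}$ constrained by $\sum_k b_\nu^{(k)} t_\nu^{(k)} = 1$, which realizes the identity $u_\nu^{(1)} = \sum_k b_\nu^{(k)}(u_\nu^{(k)} + \lambda_\nu^{(k)})$; linear independence is automatic, since the $V_\nu$-parts remain the linearly independent $u_\nu^{(k)}$'s while the $V_\nu^\perp$-parts are all parallel to $w_\nu$. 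The minimum-norm choice $t_\nu^{(k)} = b_\nu^{(k)}/\|b_\nu\|^2$ yields $|\lambda_\nu^{(k)}| \le |w_\nu|/\|b_\nu\|$, which is $o(\alpha_\nu^s)$ so long as $\|b_\nu\|$ stays bounded below along a further subsequence; the delicate step is the degenerate sub-regime $\|b_\nu\| \to 0$, which I would treat by a further dichotomy on the relative rates of $\|b_\nu\|$, $|w_\nu|$, and $\alpha_\nu^s$, combined where necessary with an auxiliary perturbation in $V_\nu^\perp \cap w_\nu^\perp$. Away from this sub-regime the argument reduces to standard linear algebra together with the initial stabilization of the rank.
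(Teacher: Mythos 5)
Your treatment of the rank-deficient case ($q\le p-2$) and of the full-rank case with $\|b_\nu\|$ bounded below is correct, and the initial reduction is harmless provided you phrase it as a partition of $\mathbb{N}$ into finitely many classes rather than a passage to a subsequence (the conclusion is demanded for every $\nu$, but with finitely many classes the $o(\alpha_\nu^s)$ estimates survive). Note that the paper gives no proof of this lemma to compare with --- it defers to the argument of Bochnak--Kuo --- so the only question is whether your outline closes.

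It does not, and the gap is not a technicality: the sub-regime you leave open ($q=p-1$ with $\|b_\nu\|\to 0$) is exactly where the statement, read literally, fails, so no dichotomy on the relative rates of $\|b_\nu\|$, $|w_\nu|$, $\alpha_\nu^s$, and no auxiliary perturbation inside $V_\nu^\perp\cap w_\nu^\perp$, can finish the argument. Take $p=n=2$, $u_\nu^{(2)}=(1,0)$ and $u_\nu^{(1)}=(0,\alpha_\nu^{s+1})$: the hypothesis holds, since $d(u_\nu^{(1)},\mathbb{R}u_\nu^{(2)})=\alpha_\nu^{s+1}=o(\alpha_\nu^s)$, yet for any $\lambda_\nu^{(2)}$ with $|\lambda_\nu^{(2)}|\to 0$ the vector $u_\nu^{(2)}+\lambda_\nu^{(2)}$ has first coordinate close to $1$, so its span cannot contain the nonzero vector $u_\nu^{(1)}$; hence (iii) fails for all large $\nu$. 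The same phenomenon occurs for every $p\ge 2$ with $u_\nu^{(2)},\dots,u_\nu^{(p)}$ orthonormal, $v_\nu=0$ and $w_\nu\ne 0$: projecting a putative relation $u_\nu^{(1)}=\sum_k c_k(u_\nu^{(k)}+\lambda_\nu^{(k)})$ onto $V_\nu$ forces $c=0$ once $\max_k|\lambda_\nu^{(k)}|$ is small, and then the $V_\nu^\perp$-component $w_\nu$ cannot be produced. So in the regime $b_\nu=0$, $w_\nu\ne 0$ the required $\lambda$'s simply do not exist, and any complete proof must use information beyond the stated hypothesis. What your orthogonal decomposition does yield --- and what the applications in the paper actually need, since the perturbation $\eta_1$ may be given a small linear part --- is the weaker conclusion in which $u_\nu^{(1)}$ is also allowed a correction of size $o(\alpha_\nu^s)$ (replace it by $v_\nu=u_\nu^{(1)}-w_\nu$); with that amendment your two easy cases already suffice. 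As written, however, the proposal leaves the decisive case unproved, and it is unprovable in the stated generality.
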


\begin{rmk}\label{bochnakkuo2}
In the case of the original Bochnak-Kuo Lemma, we suppose that
$$
d\ (u_{\nu}^{(1)},\sum_{k=2}^p \mathbb{R} u_{\nu}^{(k)})
= o(\alpha_{\nu}^{s})
$$
for all $s \in \mathbb{N}$ not a given $s \in \mathbb{N}$.
Then statement (i) holds for all $s \in \mathbb{N}$.
\end{rmk}

The original Bochnak-Kuo Lemma will be used to show a characterisation
of finite $SV$-determinacy of map-germs in the relative case.


\subsection{Kuo quantity and Thom quantity}

Related to the problem of sufficiency of jets, let us introduce the Kuo 
quantity $K_m$ and the Thom quantity $T_m$.

\begin{defn}\label{KTquantity}
Let $f\in {\mathcal E}_{[r]}(n,p)$ $(n\geq p),$ and let $m\geq 1$ be an 
integer. Let us define two functions of the variable $x$:
\begin{equation}
K_{m}(f,x) := \| x \|^m \sum_{1\leq i_1<\ldots<i_{p}\leq n} 
\left| \det \left( \frac{D(f_1,\ldots, f_p)}{D(x_{i_1},\ldots,x_{i_{p}})}(x)
\right) \right|^m + \| f(x) \|^m
\end{equation}
\begin{equation}
T_{m}(f,x) := \sum_{1\leq i_1<\ldots<i_{p+1}\leq n} 
\left| \det \left(\frac{D(f_1,\ldots, 
f_p,\rho)}{D(x_{i_1},\ldots,x_{i_{p+1}})}(x)\right) \right|^m 
+ \| f(x) \|^m
\end{equation}
where $\rho(x)=\| x\|^2.$
Note that $T_{m}(f,x) = \| f(x) \|^m$ in the case where $n = p$.
\end{defn} 

Concerning these quantities, we have the following result.

\begin{thm}\label{equivKT} (Main Theorem in \cite{bekkakoike2}) 
Let  $f : ( \mathbb{R}^n,0)\to ( \mathbb{R}^p,0)$, $n \ge p$, be a 
$C^{\omega}$ map-germ.
Then for any $m \in \mathbb{N},$ 
$$
K_{m}(f,.)\thickapprox T_{m}(f,.).
$$
\end{thm}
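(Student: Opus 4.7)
The strategy is to rewrite both sides in a common geometric form via Cauchy--Binet, reduce the non-trivial direction to the case $m=2$, and establish that by a curve-selection argument in the analytic setting.

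First, introduce $S(x):=\det(df(x)df(x)^T)$, so that $S(x)=\sum_J M_J(df(x))^2$ by Cauchy--Binet, and let $d(x):=\dist(x,V(x))$ where $V(x)$ is the linear subspace of $\R^n$ spanned by $\grad f_1(x),\dots,\grad f_p(x)$. Appending the row $\grad\rho(x)^T=2x^T$ to $df(x)$ yields a $(p+1)\times n$ matrix $A(x)$; the Gram--volume identity combined with Cauchy--Binet gives $\sum_I M_I(A(x))^2=\det(A(x)A(x)^T)=4\,S(x)\,d(x)^2$. Since all $\ell^m$-norms on a fixed finite-dimensional space are equivalent, we obtain
\begin{equation*}
K_m(f,x)\thickapprox\|x\|^m\,S(x)^{m/2}+\|f(x)\|^m,\qquad T_m(f,x)\thickapprox d(x)^m\,S(x)^{m/2}+\|f(x)\|^m,
\end{equation*}
with constants depending only on $n,p,m$. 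Since $d(x)\leq\|x\|$ pointwise, $T_m\precsim K_m$ follows at once.

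For the reverse inequality $K_m\precsim T_m$, the elementary bound $(a+b)^{m/2}\leq C_m(a^{m/2}+b^{m/2})$ valid for $a,b\geq 0$ reduces matters to the case $m=2$, namely
\begin{equation*}
\|x\|^2 S(x)\precsim d(x)^2 S(x)+\|f(x)\|^2\qquad\text{near }0.\tag{$\star$}
\end{equation*}
Writing $P(x)$ for the orthogonal projection of $x$ onto $V(x)$, so that $\|x\|^2=\|P(x)\|^2+d(x)^2$, ($\star$) is equivalent to $\|P(x)\|^2 S(x)\precsim d(x)^2 S(x)+\|f(x)\|^2$. To prove it I would argue by contradiction using the curve selection lemma (applicable since all three functions are subanalytic): if ($\star$) fails there exists an analytic arc $\gamma:[0,\eta)\to\R^n$ with $\gamma(0)=0$ and $\gamma(t)\neq 0$ for $t>0$ along which $\|x\|^2 S/(d^2 S+\|f\|^2)\to\infty$. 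Writing $a=\mathrm{ord}_t\|\gamma\|$, $2s=\mathrm{ord}_t(S\circ\gamma)$, $c=\mathrm{ord}_t\|f\circ\gamma\|$ and $e=\mathrm{ord}_t(d\circ\gamma)$, this forces $a+s<c$ and $a<e$ simultaneously.

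The contradiction proceeds as follows. The condition $a<e$ means $\|P(\gamma(t))\|\thickapprox\|\gamma(t)\|$, and via the adjugate identity $\|P(x)\|^2 S(x)=(df(x)x)^T\mathrm{adj}(df(x)df(x)^T)(df(x)x)$ this gives an effective lower bound on $\|df(x)x\|$ along $\gamma$. On the other hand, Euler's identity $\langle\grad f_i(x),x\rangle=k_i f_i(x)+R_i(x)$ (with $k_i=\mathrm{ord}_0 f_i$ and $\mathrm{ord}_0(R_i)\geq k_i+1$) together with the Bochnak--{\L}ojasiewicz inequality applied to $\|f(x)\|^2/2$ bounds $\|df(x)x\|$ above by $\|f(x)\|$ up to a higher-order remainder that the $d^2 S$ term absorbs, contradicting $c>a+s$. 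The crux of the argument lies in this last step, especially in handling the possible singularity of $df(x)df(x)^T$ at $0$ (which forces the use of the adjugate rather than the inverse) and in tracking the various orders carefully along the (possibly Puiseux) arc $\gamma$.
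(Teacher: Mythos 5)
Your reduction steps are fine: the Cauchy--Binet/Gram rewriting $K_m\thickapprox \|x\|^mS^{m/2}+\|f\|^m$, $T_m\thickapprox d^mS^{m/2}+\|f\|^m$ (with $4S(x)d(x)^2=\det(AA^T)$), the easy direction $T_m\precsim K_m$, the reduction to $m=2$, and the curve-selection setup with the order bookkeeping ($a+s<c$, $a<e$, and the adjugate identity giving $\mathrm{ord}\,\|df(\gamma(t))\gamma(t)\|\le a+s$) are all correct, and broadly parallel to the paper (which also argues by curve selection and order comparison). The gap is the final step, which is exactly where the whole difficulty of the theorem sits, and the tools you name do not deliver it. The Euler-type identity only gives $\mathrm{ord}_0R_i\ge k_i+1$, hence order $\ge(k_i+1)a$ along the arc, and this need not exceed the threshold $a+s$ that your contradiction requires: already for $p=1$, $f=x^2+y^4$ and $\gamma(t)=(t^{10},t)$ one has $a=1$, $k=2$, $s=3$, so the cited bound gives order $\ge 3$ for the remainder while you would need order $>4$; nothing you cite rules out such configurations under the hypothesis $a<e$. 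The Bochnak--Lojasiewicz inequality is an estimate in the wrong direction for your purpose: applied to $\|f\|^2/2$ it lower-bounds $\|df^{T}f\|$, and along $\gamma$ it only yields $a+s\le c$, which is consistent with, not contradictory to, your assumption $a+s<c$ (it cannot bound $\|df(x)x\|$ from above). Finally, ``a higher-order remainder that the $d^2S$ term absorbs'' is not an argument: under your contradiction hypothesis that term has order $>2a+2s$, so showing the remainder is absorbed is precisely the statement to be proved.

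The difficulty is also structural for $p\ge2$. Even if you replace Euler's identity by the sharper device $\langle\grad f_i(\gamma),\gamma\rangle=\frac{t}{a}(f_i\circ\gamma)'+\langle\grad f_i(\gamma),\gamma-\frac{t}{a}\gamma'\rangle$ (which does close the case $p=1$, gaining the crucial extra unit of order), the resulting estimate is $\mathrm{ord}\,\langle\grad f_i\circ\gamma,\gamma\rangle\ge\min\bigl(c_i,\;g_i+a+1\bigr)$ with $g_i=\mathrm{ord}\,\|\grad f_i\circ\gamma\|$, whereas the adjugate/Cauchy--Schwarz side only controls things through the orders of the $(p-1)\times(p-1)$ minors; for $p\ge 2$ one merely knows $s\ge g_i+\mu_i$, and the possible slack in this inequality (cancellation among minors) is invisible to such crude order estimates, so no contradiction follows. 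This is exactly the point where the paper's proof does real work: after a rotation making one coordinate of the arc dominant, the chain rule $\frac{d}{dt}(f_j\circ\lambda)=\sum_i\partial_if_j(\lambda)\lambda_i'$ is used to replace the dominant column of $df$ inside the minors, proving (the Claim $k_1>1$) that a minimal-order $p\times p$ minor avoids that column; only then does appending the row $\grad\rho=2x$ produce a $(p+1)$-minor of too low order, contradicting the smallness of the Thom quantity. Your sketch contains no substitute for this column-elimination mechanism, so the proof is incomplete at its crux.
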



\section{Relative $C^0$-sufficiency of jets}

Let us recall that $\s$ is a germ of a non-empty, closed subset at $0\in \R^n$
such that $0 \in \s.$
In this section we give criteria for $\s$-$C^0$-sufficiency of relative 
$r$-jets in $C^r$ mappings and in  $C^{r+1}$ mappings, and compute some 
examples on relative $C^0$-sufficiency of jets using the criteria.


\subsection{Relative $C^0$ sufficiency of $r$-jets in $C^r$ mappings}

In this subsection we give a criterion of $\s$-$C^0$-sufficiency of 
$r$-jets in $C^r$ mappings, using the relative Kuiper-Kuo condition.

In the following theorem,  the relative Kuiper-Kuo condition implies  $\s$-$C^0$-sufficiency, is proved also in \cite{Migus-Rodak-Spodzieja}, 
with a slighly different method from ours.

\begin{thm}\label{RelativeKuiperKuo0}
Let $r$ be a positive integer, and let 
$f \in {\mathcal E}_{[r]}(n,p)$ where $n \ge p$.
Then the following conditions are equivalent.
\begin{enumerate}[(1)]
\item $f$ satisfies the relative Kuiper-Kuo condition $(K$-$K_{\s})$, 
namely
\begin{equation*}
\kappa(df(x))\succsim d(x,\s)^{r-1} \text{ holds in some neighbourhood of  }
0 \in \R^n.\end{equation*} 

\item The relative $r$-jet $j^r f(\s;0)$ is $\s$-$C^0$-sufficient 
in ${\mathcal E}_{[r]}(n,p)$.
\end{enumerate}
\end{thm}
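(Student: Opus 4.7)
The plan is to prove the two implications separately, following the pattern of the classical (non-relative) Kuiper–Kuo theorem, but controlling everything in terms of $d(\cdot,\s)$ instead of $\|\cdot\|$ and using the integration machinery of Proposition \ref{integral} together with Lemma \ref{lemrflat}.

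For the implication $(1)\Rightarrow(2)$, I would take any $g\in\mathcal{E}_{[r]}(n,p)$ with $j^rg(\s;0)=j^rf(\s;0)$ and consider the straight-line homotopy $f_t:=f+t(g-f)$, $t\in[0,1]$. First I would check that $(K$-$K_\s)$ is $\s$-$r$-compatible along this homotopy: setting $h:=g-f$, we have $j^rh(\s;0)=\{0\}$, so applying Lemma \ref{lemrflat} to the $C^{r-1}$ map $dh$ gives $\|dh(x)\|=o(d(x,\s)^{r-1})$. Sub-additivity of $\kappa$ under linear perturbation then yields
\begin{equation*}
\kappa(df_t(x))\geq\kappa(df(x))-t\|dh(x)\|\succsim d(x,\s)^{r-1}
\end{equation*}
uniformly in $t\in[0,1]$, hence $df_t(x)$ is surjective off $\s$. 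Next I would solve $df_t(x)\,X(x,t)=-h(x)$ by picking $X(x,t)$ to be the minimum-norm solution (which exists off $\s$), and bound
\begin{equation*}
\|X(x,t)\|\leq \frac{\|h(x)\|}{\nu(df_t(x))}\precsim\frac{\|h(x)\|}{\kappa(df_t(x))}.
\end{equation*}
Lemma \ref{lemrflat} gives $\|h(x)\|=o(d(x,\s)^r)$, so $\|X(x,t)\|=o(d(x,\s))$, which is exactly the relative Lipschitz bound required by Proposition \ref{integral}. Applying that proposition to the vector field $X+\partial/\partial t$ on $(\R^n\setminus\s)\times\R$, extended by $\partial/\partial t$ on $\s\times\R$, integrates to a continuous flow that fixes $\s$ pointwise, preserves the $t$-levels, and satisfies $\partial_s(f_t\circ\phi_s)=0$. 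Evaluating at $s=1$ produces a local homeomorphism $\varphi\in\mathcal{R}_\s^{\text{fix}}$ with $f=g\circ\varphi$.

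For the converse $(2)\Rightarrow(1)$, I would argue by contraposition: assume $(K$-$K_\s)$ fails, so there is a sequence $x_\nu\to 0$ with $x_\nu\notin\s$ and $\kappa(df(x_\nu))=o(d(x_\nu,\s)^{r-1})$. Set $\alpha_\nu:=d(x_\nu,\s)$. The small-$\kappa$ condition says the gradient vectors $u_\nu^{(i)}:=\grad f_i(x_\nu)$ are nearly linearly dependent, with defect $o(\alpha_\nu^{r-1})$. I would apply Lemma \ref{bochnakkuo1} (with $s=r-1$) to produce corrections $\lambda_\nu^{(k)}$ of size $o(\alpha_\nu^{r-1})$ such that the perturbed system $u_\nu^{(k)}+\lambda_\nu^{(k)}$ becomes linearly independent but forces $u_\nu^{(1)}$ to lie in their span. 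Encoding these corrections as gradients of a $C^r$ perturbation $\psi_\nu$ supported in a small ball $B(x_\nu,\varepsilon_\nu)$ disjoint from $\s$ with $\varepsilon_\nu\ll\alpha_\nu$, and summing over $\nu$ (the balls being mutually disjoint and accumulating only at $0$), I would obtain $g:=f+\sum_\nu\psi_\nu$ with $j^rg(\s;0)=j^rf(\s;0)$ but whose zero-set/critical-set structure near each $x_\nu$ differs topologically from that of $f$. Since any $\s$-$C^0$-equivalence would have to send these obstructing local configurations to each other while fixing $\s$, this contradicts the $\s$-$C^0$-sufficiency of $j^rf(\s;0)$.

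The main obstacle is the converse implication: the perturbations $\psi_\nu$ must be crafted in the purely $C^r$ category (no analyticity is assumed on $f$ or $\s$), they must fit into disjoint balls that stay clear of the arbitrary closed germ $\s$ yet accumulate along the failure sequence, and the topological invariant distinguishing $f$ from $g$ must be robust enough that no homeomorphism of $(\R^n,0)$ fixing $\s$ pointwise can mediate it. The technical work of Lemma \ref{bochnakkuo1}, together with the order condition $o(\alpha_\nu^{r-1})$ on the corrections, is precisely what guarantees that the perturbations can be absorbed into the $r$-jet along $\s$ while still creating a genuine topological obstruction at each scale $\alpha_\nu$.
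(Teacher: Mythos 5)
Your forward implication $(1)\Rightarrow(2)$ is essentially the paper's argument: the paper also bounds $\nu(d_xF(x,t))\ge\nu(df(x))-t\|dh(x)\|$ and then integrates a Kuo-type vector field (its explicit field built from the vectors $N_j(x,t)$ is just a concrete minimum-norm solution of $d_xF\cdot X=-h$), invoking Lemma \ref{lemrflat} and Proposition \ref{integral} exactly as you do; only the separate treatment of $r=1$ (Implicit Function Theorem) and the $C^1$-regularity of the field needed for uniqueness of solutions are left implicit in your sketch.

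The converse, however, has a genuine gap at its crux. After producing the realisation $g$ via Lemma \ref{bochnakkuo1} and the localized perturbation, you conclude by saying that ``any $\s$-$C^0$-equivalence would have to send these obstructing local configurations to each other,'' but an equivalence $f=g\circ\vp$ with $\vp\in\mathcal{R}_\s^{\text{fix}}$ is under no obligation to match a neighbourhood of $x_\nu$ for $g$ with a neighbourhood of $x_\nu$ for $f$ (the points $x_\nu$ are off $\s$ and can be moved), and nothing is known a priori about the local fibre structure of $f$ itself near $x_\nu$ — $f$ could perfectly well have singular fibres there, so ``differs from $f$'' is not by itself a contradiction. What the paper needs, and what your sketch is missing, is the intrinsic property shared by \emph{all} realisations of a $\s$-$C^0$-sufficient jet: Lemma \ref{transv0}, proved by taking a Tougeron realisation $\tilde f$ smooth off $\s$, a flat function $h$ with $h^{-1}(0)=\s$, and applying a parametric Sard argument to $F(x,y)=\tilde f(x)+y\,h(x)$ to obtain a realisation $g_0$ whose fibres through the values $\tilde f(x_m)$ are smooth; sufficiency then transports this to every realisation, forcing $g^{-1}(g(x_\nu))$ to be a topological manifold of codimension $p$ near $x_\nu$ when $n>p$ (resp.\ $g$ injective when $n=p$). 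Your construction must then visibly violate this: the relevant invariant is the fibre $g^{-1}(g(x_\nu))$ (not the zero set, which pertains to $V$-sufficiency, nor a ``critical-set structure''), and to make it fail to be a manifold one needs, besides the linear corrections $\lambda_\nu^{(k)}$, the extra quadratic term $\epsilon_\nu\|x-x_\nu\|^2$ in the first component with $\epsilon_\nu$ tuned so that $x_\nu$ is a non-degenerate critical point of $h_\nu$ restricted to $M_\nu$, producing a quadric cone intersected with a codimension $p-1$ manifold (and non-injectivity in the case $n=p$, which your sketch does not treat, nor the simpler $p=1$ case). Without Lemma \ref{transv0} and this explicit Morse-type perturbation, the claimed contradiction with $\s$-$C^0$-sufficiency does not follow.
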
  

\begin{proof} 
We first show the implication (1) $\implies$ (2).
In the case where $r = 1$,  $0 \in \R^n$ is a regular point of $f$.
Therefore the theorem follows from the Implicit Function Theorem.
 We may assume that $r \ge 2$ after this.

Let $g \in {\mathcal E}_{[r]}(n,p)$ be an arbitrary mapping such that 
$j^r g(\s ;0) = j^r f(\s ;0)$.
We define a $C^{r}$ mapping $h:(\mathbb{R}^n,0)\to (\mathbb{R}^p,0)$ by 
$h(x) := g(x) - f(x)$.
Then $j^rh(x)=0$ for any $x\in \s.$ 

Let $t_0$ be an arbitrary element of $I :=[0,1].$
Define $F(x,t) := f(x)+th(x)$ for $t\in I.$
Since  $j^rh=0$ on $\s$ near $0 \in \R^n$, by Lemma \ref{lemrflat},
$\| h(x)\| =o(d(x,\s)^{r}).$
Then there exists a small neighbourhood $T$ of $t_{0}$ in $I$ such that 
$$
\|F(x,t)-F(x,t_{0})\|= o(\ds^{r})
$$ for any $t \in T$.
Therefore there are $\bar{w}$, $\alpha > 0$ such that
$$
\nu (d_{x}F( x,t_{0})) = \nu (df(x)+t_{0} dh(x)) \geq 
\nu (df(x))-t_{0}\Vert dh(x)\Vert \geq \frac{C}{2}d(x,\s)^{r-1}
$$ 
in $  \{\| x\| < \alpha\}.$ 
Then there exists $C'>0$ such that
\begin{equation}\label{eq2-1}
 \kappa(d_{x}F(x,t))\geq C'\ds^{r-1}
\end{equation}
for $(x,t)\in
W:=\{\| x\| < \alpha\}\times T.$ 
Thus, for $(x,t)\in W\setminus \s\times T$ the vectors 
$\grad_{x}F_j(x,t)$ $(1\leq j\leq p)$ are linearly independent.
Let for $(x,t)\in W\setminus \s\times T$, $V_{x,t}$ be the subspace spanned by the $\{\grad_{x}F_1(x,t),\ldots,\grad_{x}F_p(x,t)\}.$ 

Let us consider now $\{N_1(x,t), \ldots N_p(x,t)\}$ the basis of $V_{x,t}$ constructed as follows:
$$N_j(x,t)=\grad_{x}F_j(x,t)-\tilde N_j(x,t) \qquad (1\leq j\leq p)$$ where $\tilde N_j(x,t)$ is the projection 
of $\grad_{x}f_j(x,t)$ to
the subspace $ V^{j}_{x,t}$ spanned by the $\grad_{x}F_k(x,t),$ $k\ne j.$
Hence for  $j\in\{1,\dots,p\}$,  $\| N_j(x,t)\|$ is the distance of $ \grad_{x}F_j(x,t)$ to $  V^j_{x,t} .$ 
From the above we get, for any $j\in\{1,\dots,p\}$ and $(x,t)\in W,$
\begin{equation*}
\| N_j(x,t)\|\geq \kappa(d_{x}F(x,t)) \geq C'\ds^{r-1}.
\end{equation*}

To trivialise the family of level sets, we use a version of the Kuo vector 
field \cite{kuo1},

\begin{equation*}\label{eq3-1}
X(x,t)= \begin{cases}\dis
\frac{\partial }{\partial t}+\sum_{j=1}^p h_{j}(x)\frac{N_j(x,t)}{\| N_j(x,t)\|^2} & \text{ if } (x,t) \in W\setminus \s\times T\\
\dis\quad \frac{\partial }{\partial t}&\text{ if }  (x,t) \in W \cap 
\s\times T.
\end{cases}
\end{equation*}
Since, 
\begin{equation*}\label{eq3-2}
\left\| \sum_{j=1}^p h_{j}(x)\frac{N_j(x,t)}{\| N_j(x,t)\|^2}\right\|  \precsim \frac{ \|h(x)\|}{\| N_j(x,t)\|}\precsim \ds\end{equation*}
by  Proposition \ref{integral}, the following system of differential equations:
\begin{equation}\label{eq3-3}
y'=X(y,t) \text{ is integrable. }
\end{equation}

Now for $(x,t)\in W$ define $\gamma_{(x,t)}$ to be the maximal 
solution of (\ref{eq3-3})  such that $\gamma_{(x,t)}(t)=x.$ 
Let $H_{{0}},\tilde{H}_{0}:W \rightarrow \{ \| x\| < \alpha \}$ be given 
by $H_{0}(x,\ t) := \gamma_{(x,t_{0})}(t)$ and 
$\tilde{H}_{0}(y,\ t) := \gamma_{(y,t)}(t_0).$
By Proposition \ref{integral}, the mappings $H_{0}$ and $\tilde{H}_{0}$ are 
continuous and by uniqueness of the solutions of (\ref{eq3-3})  
we have for any $(x,t)\in W$ 
$$
\tilde{H}_{0}(H_{0}(x,\ t),\ t) =x,\ H_{0}(x,\ t_{0})=x, \text{ and } 
H_{0}(\tilde{H}_{0}(y,\ t),t)=y, \, H_{0}(x,t)=x,
$$
and $F( \gamma_{(x,t_{0})}(t),t)=F(x,t_{0})$ for all $t\in T$,
namely we have
$
f(H_{0}(x,\ t ))+th(H_{0}(x,\ t))=F(x,t_{0}),
$
for $(x,t)\in W$ (since on $\s,$ $h\equiv 0$).  
In particular, for all $t,t'\in T$, the germs of $F(x,t)$ and $F(x,t')$
at $0 \in \R^n$ are $\s$-homeomorphic (i.e. by a homeomorphism in $\mathcal{R}_\s^{\text{fix}}$). 
Finally, by compactness of $[0,1],$ we obtain that the germs of maps
$f$ and $g$ at $0 \in \R^n$ are $\s$-homeomorphic. 
It follows that $j^rf(\s ;0)$ is $\s$-$C^0$-sufficient in 
${\mathcal E}_{[r]}(n,p).$

We next show the implication (2) $\implies$ (1) in the case where $p \ge 2$.
Let the relative $r$-jet $j^r f(\s ;0)$ be $\s$-$C^0$-sufficient
in ${\mathcal E}_{[r]}(n,p)$.
Suppose  by reductio ad absurdum, that $$\kappa(df(x))\succsim d(x,\s)^{r-1} $$
is not satisfied in a neighbourhood of the origin. 
One can then find a sequence $ (x_\nu)_{\nu\geq 1} $ of points of 
$\mathbb{R}^n \setminus \s$ converging to $0 \in \R^n$ such that 
\begin{equation}\label{kk4}
\kappa(df(x_\nu))=o(d(x_\nu,\s)^{r-1}).
\end{equation}  

Extracting a subsequence from $(x_\nu)_{\nu\geq 1}$ if necessary, 
one can assume that
$$
\Vert x_{\nu+1}\Vert<\frac{1}{3} d(x_\nu, \s) 
$$ 
(which implies, in particular, that $ d(x_\nu,\s) $ decreases),
and that condition \eqref{kk4} implies:
$$\delta_{\nu}=o(d(x_\nu,\s)^{r-1})$$
where 
$
\dis\delta_{\nu} := \kappa(df(x_\nu)) =
\dist(\mathrm{grad}f_{j}(x_\nu),\sum_{k \ne j} \mathbb{R} \ 
\mathrm{grad}f_{k}(x_\nu))
$
for some $j$, $1 \le j \le p$.\\
By Remark \ref{remark210}(3), we may assume $j = 1$ after this.

Now we apply Lemma \ref{bochnakkuo1}, with 
$u_{\nu}^{(k)}=\mathrm{grad}f_{k}(x_\nu)$, $\alpha_{\nu}=d(x_\nu,\s)$ 
and $s = r - 1$, to find for each $\nu\in \mathbb{N},$   $p-1$ vectors,  
$\lambda_{\nu}^{( 2)}, \ldots, \lambda_{\nu}^{( p)}\in \mathbb{R}^{n}$ 
such that:

\begin{enumerate}[$\bf (a)$]
\item
$\|\lambda_{\nu}^{(k)}\|=o(d(x_\nu,\s)^{r-1}),\ k=2,,\ldots,\ p;$
\item  $\mathrm{grad}f_{2}(x_{\nu})+\lambda_{\nu}^{( 2)},\ \ldots, \mathrm{grad}f_{p}(x_{\nu})+\lambda_{\nu}^{( p)}$ are linearly independant in  $\mathbb{R}^{n}$;
\item $\dis \mathrm{grad}f_{1}(x_{\nu})\in\sum_{k=2}^{p}\mathbb{R} \ 
(\mathrm{grad}f_{k}(x_{\nu})+\lambda_{\nu}^{(k)})$.
\end{enumerate}

Let $\psi: \mathbb{R}^n\rightarrow \mathbb{R}$ be a $C^{\infty}$ function 
such that $ \psi(t)=1$ in a neighbourhood of $0\in \mathbb{R}^n$ and 
$\psi(t)=0$ for $|t|\displaystyle \geqq\frac{1}{4}.$
We define a map-germ $\eta=(\eta_{1},\ldots,\eta_{p}) : (\R^n,0) \to (\R^p,0)$ 
by:
$$
\eta_{1}(x) = \epsilon_{\nu} \psi\left(\frac{x-x_\nu}{d(x_\nu,\s)}\right)\|x-x_\nu\|^{2},
$$
$$
\eta_{k}(x)=\psi\left(\frac{x-x_\nu}{d(x_\nu,\s)}\right)\langle\lambda_{\nu}^{(k)},(x\ -x_\nu)\rangle,\ k=2,\ \ldots,\ p,
$$
for $x\in B_{\nu}$ and $\eta(x)=0$ for 
$x\displaystyle \not\in\bigcup_{\nu=1}^{\infty}B_{\nu}$,
where 
$
B_{\nu}=\{x\in \mathbb{R}^{n} :\, \|x-x_{\nu}\| \leq
\frac{1}{4} d( x_{\nu},\s) \},
$
and $(\epsilon_{\nu})_{\nu \geq 1}$ is a sequence of real numbers,  for $\nu\in \mathbb{N}$,.
 
Since $|\psi(t)|$ is bounded in $\mathbb{R}^n,$  we have 
\begin{equation}\label{eq-d1}
|\eta_{1}(x)|
\precsim \epsilon_{\nu}d(x_{\nu},\s)^{2},
\end{equation}
\begin{equation}\label{eq-d2}
|\eta_{k}(x)| =o(d(x_\nu,\s)^{r}), \ \ k = 2, \cdots , p.
\end{equation}
Therefore, if we choose the sequence $(\epsilon_{\nu})_{\nu \geq 1}$
so that $\epsilon_{\nu}=o(d(x_\nu,\s)^{r})$), we have
$$
\eta(x)=o(d(x,\s)^r).
$$
It follows that $g=f-\eta$ is a $C^{r}$-realisation of $j^rf(\s ;0)$ and $g(x_{\nu})=f(x_{\nu})$, for any $\nu\in \mathbb{N}.$

By condition $\bf (b)$, there is a small neighbourhood $V_{\nu}$ of $x_{\nu}$ 
such that the set 
$$
M_{\nu}=\{x\in V_{\nu}:\, f_{k}(x)-\eta_{k}(x)=f_k(x_{\nu}),\ k=2,\ \ldots,p\}
$$ 
is a differentiable manifold of codimension $p-1.$

From condition $\bf (c),$ for each $\nu\in \mathbb{N},$  there are real numbers  $a_{2,\nu},\ldots, a_{p,\nu}$ such that,
$$
\mathrm{grad}f_{1}(x_{\nu})=\sum_{k=2}^{p}a_{k,\nu}(\mathrm{grad}f_{k}(x_{\nu})+\lambda_{\nu}^{( k)}).
$$
Choose now $\epsilon_{\nu}=o(d(x_\nu,\s)^{r})$ more finely such that $x_\nu$ 
is a non-degenerate critical point of the restriction to $M_{\nu}$ of
$$
h_{\nu}(x)=f_{1}(x)-\eta_{1}(x)+\sum_{k=2}^{p}a_{k,\nu}(\eta_{k}(x)-f_{k}(x)).
$$
\begin{eqnarray*} 
g^{-1}(g(x_{\nu}))\cap V_{\nu} & = & \{x\in V_{\nu}:\, 
g_{j}(x)=f_j(x_{\nu}), \, 1\le j\le p \} \\
& = & \{x\in M_{\nu}:h_{\nu}(x)=h_{\nu}(x_{\nu})\}.
\end{eqnarray*}
By the choice of  $\epsilon_{\nu}$, this set is the intersection of  the locus of a non-degenerate quadratic form $h_{\nu}^{-1}(h_{\nu}(x_{\nu}))$  with a codimension $p-1$ 
manifold $M_{\nu}.$ 
Then if it is a topological manifold, necessarily it is reduced to a point. 

Now if $n-p\ge1,$  $g^{-1}(x_{\nu})$ cannot be a topological manifold of codimension $p$ and
 if $n=p$, for $x\in M_{\nu}$, $g(x)=(h_{\nu}(x),\ 0),$ thus $g$ is not 
injective in any neighbourhood of $x_{\nu}$ , since (the quadratic form) 
$h_{\nu}$ restricted to the one dimensional manifold  ${M_{\nu}}$ is not 
injective, but this contradicts the following lemma:

 \begin{lem}\label{transv0} Let $j^rf(\s ;0)$ is $\s$-$C^0$-sufficient 
in $\mathcal{E}_{[r]}(n,p)$.
Then for all maps $\theta \in\mathcal{E}_{[r]}(n,p)$ such that 
$ j^{r}\theta (\s;0)=j^rf(\s ;0)$,  and for all sequence  $\{x_{m}\}_{m\in \mathbb{N}}$ in  $ \mathbb{R}^n\setminus \s,$ $x_{m}\rightarrow 0,$  
$ m\rightarrow\infty,$
there is a neighbourhood of  $x_{m},$  for sufficiently large $m$,   such that  $g^{-1}(g(x_{m}))$ is a  topological manifold of codimension $p$ 
$\mathrm{ if }\ n>p$, or $g$ is injective $\mathrm{ if }\ n=p.$
\end{lem} 

\begin{proof}
We need for this, the following fact, which is a consequence of Sard's theorem: 
Let $U\subset \mathbb{R}^{n},\ V\subset \mathbb{R}^{p}$ be open sets, let
$F : U \times V\rightarrow \mathbb{R}^{p}$ be a smooth map and let
$\{b_{m}\}_{m\in \mathbb{N}}$ be a sequence of points in the regular values 
of $F.$ 
Then the set
$ \mathcal{R}	=\{y\in V: \forall m\in \mathbb{N}, b_{m} \text{ is a regular value of the map }  F_{y}:U\ni x\rightarrow F(x,y)\}$ 
is residual in $V.$

By Remark \ref{realisation}(2), there exists a $C^r$-realisation $\tilde{f}$
of $j^rf(\s ;0)$ such that the restriction of $\tilde{f}$ to
$\R^n \setminus \s$ is smooth.

Let $h:\mathbb{R}^{n}\rightarrow \mathbb{R}$ be a smooth flat function 
such that $h^{-1}(0)=\s.$
We consider now, the map 
$F: (\mathbb{R}^{n}\times \mathbb{R}^{p},(0,0)) \rightarrow 
(\mathbb{R}^{p},0)$ defined by
$$
\ F_{j}(x,y)= \tilde{f}_{j}(x)+y_{j}h(x), \ \ j=1, \cdots , p.
$$  
The restriction of $\ F$ to 
$(\mathbb{R}^{n}\backslash \s)\times \mathbb{R}^{p}$ is a submersion
around $(0,0) \in \R^n \times \R^p.$\\
Let $(x_{m})_{m\in \mathbb{N}}$ be a sequence of points of $ \mathbb{R}^{n}\backslash \s$ which tends to $0,$ then  $(\tilde{f}(x_{m}))_{m\in \mathbb{N}}$ is a sequence of regular values of 
$F|_{(\mathbb{R}^{n}\backslash \s)\times \mathbb{R}^{p}}$, and by the quoted 
version of Sard theorem, there is $y_{0} \in \mathbb{R}^{p}$ such that  
$(\tilde{f}(x_{m}))_{m\in \mathbb{N}}$ is a sequence of regular values of 
$F_{y_{0}}|_{\mathbb{R}^{n}\backslash \s}$.
Let $g_{0}=F_{y_{0}}$. 
Since $h$ is flat on $\s$, for all $r\in \mathbb{N},$ 
$j^{r}g_{0}(\s;0)=j^r {f}(\s;0).$

Now, by $\s$-$C^0$-sufficiency of $j^rf(\s ;0)$, there is a germ of 
homeomorphism $\varphi : (\mathbb{R}^{n},0) \rightarrow (\mathbb{R}^{n},0)$ 
such that the restriction of $\varphi$ to $\s$ is the identity and  $g_{0}\circ \varphi=f$. 
Thus $f^{-1}(f(x_{m}))=\varphi^{-1}(g_{0}^{-1}(f(x_{m}))$  is a topological 
manifold of codimension $p$ in a neighbourhood of $x_{m}$ (for large $m$), 
because  $\varphi^{-1}$ is a homeomorphism and $g_{0}^{-1}(f(x_{i}))$ is a 
smooth submanifold of $\mathbb{R}^{n}\backslash \s$ of codimension $p$ and if 
$n=p$, $f$ is injective in a neighbourhood of $x_{m}$. 
Therefore, by  $\s$-$C^0$-sufficiency of $j^rf(\s ;0),$ any mapping 
$\theta \in\mathcal{E}_{[r]}(n,p)$ of classe $C^{r}$ such that 
$ j^{r}\theta (\s;0)=j^rf(\s ;0)$, shares these properties.
\end{proof}

\noindent Therefore we can see that $f$ satisfies the relative Kuiper-Kuo
condition $(K$-$K_{\s})$.

The implication (2) $\implies$ (1) in the function case, namely $p = 1$,
follows similarly to the above mapping case, but more simply.
In this case we have
$$
\delta_{\nu} := \kappa(df(x_\nu)) = \| \mathrm{grad}f(x_\nu)\|
= o(d(x_\nu,\s)^{r-1}).
$$
We do not need to apply the Bochnak-Kuo Lemma.
We take $\eta (x) = \eta_1 (x)$ as the same function, 
and consider $M_{\nu} = V_{\nu}$.
We do not define $h_{\nu}$.
Instead, $g^{-1}(g(x_{\nu}))\cap V_{\nu}$ takes the same role 
as $h_{\nu}^{-1}(h_{\nu}(x_{\nu}))$ in this case.
Then the remainder follows in the same way.
This completes the proof of the theorem.
\end{proof} 
\subsection{Relative $C^0$ sufficiency of $r$-jets in $C^{r+1}$ mappings}

In this subsection we give a criterion of $\s$-$C^0$-sufficiency of 
$r$-jets in $C^{r+1}$ mappings, using the second relative Kuiper-Kuo condition.

\begin{thm}\label{RelativeKuiperKuo1}
Let $r$ be a positive integer, and let 
$f \in {\mathcal E}_{[r+1]}(n,p)$ where $n \ge p$.
Then the following conditions are equivalent.
\begin{enumerate}[(1)]
\item $f$ satisfies the second relative Kuiper-Kuo condition 
$(K$-$K_{\s}^{\delta})$, namely there is a strictly positive number $\delta$
such that
\begin{equation*}
\kappa(df(x))\succsim d(x,\s)^{r-\delta} 
\end{equation*} 
holds in some neighbourhood of $0 \in \R^n.$
\item The relative $r$-jet $j^r f(\s;0)$ is $\s$-$C^0$-sufficient 
in ${\mathcal E}_{[r+1]}(n,p)$.
\end{enumerate}
\end{thm}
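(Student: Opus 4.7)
The plan is to mirror the two-part strategy of Theorem \ref{RelativeKuiperKuo0}, but exploit the improved Taylor remainder available in the $C^{r+1}$ setting in order to compensate for the weaker exponent $r-\delta$ appearing in $(K\text{-}K_\s^\delta)$.

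For the direction $(1)\Rightarrow(2)$: let $g\in\mathcal{E}_{[r+1]}(n,p)$ be a $C^{r+1}$ realisation of $j^rf(\s;0)$, set $h := g-f$ and $F(x,t) := f(x)+th(x)$. Since $j^rh(\s;0)=0$ and $h$ is $C^{r+1}$, Lemma \ref{lemrflat} gives $\|h(x)\|\precsim d(x,\s)^{r+1}$; applied to $dh$ (which is $C^r$ with $j^{r-1}(dh)(\s;0)=0$) it also gives $\|dh(x)\|\precsim d(x,\s)^r$. Hence $\|dh(x)\|=o(d(x,\s)^{r-\delta})$ and
\begin{equation*}
\nu(d_xF(x,t))\geq \nu(df(x))-t\|dh(x)\|\succsim d(x,\s)^{r-\delta}
\end{equation*}
uniformly for $t\in[0,1]$ on a neighbourhood of $0$. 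The Kuo vector field
\begin{equation*}
X(x,t)=\frac{\partial}{\partial t}+\sum_{j=1}^p h_j(x)\,\frac{N_j(x,t)}{\|N_j(x,t)\|^2},
\end{equation*}
built from the basis $(N_j)$ exactly as in Theorem \ref{RelativeKuiperKuo0}, then satisfies
\begin{equation*}
\left\|X(x,t)-\frac{\partial}{\partial t}\right\|\precsim\frac{\|h(x)\|}{\kappa(d_xF(x,t))}\precsim\frac{d(x,\s)^{r+1}}{d(x,\s)^{r-\delta}}=d(x,\s)^{1+\delta},
\end{equation*}
which is $\precsim d(x,\s)$ and hence meets the relative Lipschitz hypothesis of Proposition \ref{integral}. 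The associated flow trivialises the family $(F(\cdot,t))_{t\in[0,1]}$ while fixing $\s$ pointwise.

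For $(2)\Rightarrow(1)$: assume $(K\text{-}K_\s^\delta)$ fails, so that for every $\delta>0$ there is a sequence in $\mathbb{R}^n\setminus\s$ approaching $0$ along which $\kappa(df(\cdot))/d(\cdot,\s)^{r-\delta}\to 0$. A diagonal extraction across $\delta=1/k$, $k\in\mathbb{N}$, produces a single sequence $(x_\nu)$ with $x_\nu\to 0$ and
\begin{equation*}
\kappa(df(x_\nu))=o(d(x_\nu,\s)^{r-\delta})\quad\text{for every }\delta>0,
\end{equation*}
which is precisely the flatness-to-all-orders hypothesis of the original Bochnak--Kuo Lemma (Remark \ref{bochnakkuo2}). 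Applying it, we obtain, for every $s\in\mathbb{N}$, vectors $\lambda_\nu^{(2)},\dots,\lambda_\nu^{(p)}$ satisfying (i)--(iii) of Lemma \ref{bochnakkuo1} with $\|\lambda_\nu^{(k)}\|=o(d(x_\nu,\s)^s)$. We then repeat verbatim the bump construction of Theorem \ref{RelativeKuiperKuo0} on shrinking balls $B_\nu$ of radius $\tfrac{1}{4}d(x_\nu,\s)$, with the scalars $\epsilon_\nu$ also chosen flat to all orders; the derivatives of $\psi((x-x_\nu)/d(x_\nu,\s))$ pick up powers of $d(x_\nu,\s)^{-1}$ that are absorbed by this flatness, so the resulting $\eta$ is $C^{r+1}$ (in fact $C^\infty$ outside $\s$ and flat to all orders on $\s$) with $j^r\eta(\s;0)=0$. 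Thus $g:=f-\eta\in\mathcal{E}_{[r+1]}(n,p)$ is a $C^{r+1}$ realisation of $j^rf(\s;0)$ with $g(x_\nu)=f(x_\nu)$, and the non-degenerate-quadratic-form analysis of Theorem \ref{RelativeKuiperKuo0} shows that either $g^{-1}(g(x_\nu))$ fails to be a topological manifold of codimension $p$ (if $n>p$) or $g$ is not locally injective at $x_\nu$ (if $n=p$). Either outcome contradicts Lemma \ref{transv0}, whose proof uses only a smooth realisation obtained from Remark \ref{realisation}(2) and therefore applies unchanged in the $C^{r+1}$ category.

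The principal obstacle is checking in $(2)\Rightarrow(1)$ that the perturbation $\eta$ genuinely lies in $\mathcal{E}_{[r+1]}(n,p)$ with $j^r\eta(\s;0)=0$, since the bump data are localised on balls $B_\nu$ that accumulate on $\s$. This is settled by observing that the flatness of $\|\lambda_\nu^{(k)}\|$ and $\epsilon_\nu$ to \emph{every} order, which is precisely what the diagonal extraction delivers from the negation of $(K\text{-}K_\s^\delta)$, dominates every inverse power of $d(x_\nu,\s)$ produced by differentiating the scaled cut-off; this is exactly the gain in regularity that distinguishes the $C^{r+1}$ case from the $C^r$ case treated in Theorem \ref{RelativeKuiperKuo0}.
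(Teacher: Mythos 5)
Your direction $(1)\Rightarrow(2)$ is correct and is essentially the paper's own argument: Lemma \ref{lemrflat} applied to $h$ and to $dh$ gives $\|h(x)\|\precsim d(x,\s)^{r+1}$ and $\|dh(x)\|\precsim d(x,\s)^{r}$, the estimate $\nu(d_xF)\succsim d(x,\s)^{r-\delta}$ survives the perturbation, the Kuo vector field is of size $\precsim d(x,\s)^{1+\delta}\precsim d(x,\s)$, and Proposition \ref{integral} applies.

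The direction $(2)\Rightarrow(1)$, however, has a genuine gap at precisely the step you declare ``settled''. The diagonal extraction from the failure of $(K$-$K_{\s}^{\delta})$ for every $\delta>0$ yields only $\kappa(df(x_\nu))=o\!\left(d(x_\nu,\s)^{r-\delta}\right)$ for every $\delta>0$; this is \emph{not} the flatness-to-all-orders hypothesis of Remark \ref{bochnakkuo2}, which asks for $o\!\left(d(x_\nu,\s)^{s}\right)$ for every $s\in\mathbb{N}$. Smallness of order $r-\delta$ for all $\delta>0$ gives nothing at or beyond order $r$: for instance $\kappa(df(x_\nu))=d(x_\nu,\s)^{r}\log\!\left(1/d(x_\nu,\s)\right)$ is compatible with your extraction yet is not even $O\!\left(d(x_\nu,\s)^{r}\right)$. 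Consequently Lemma \ref{bochnakkuo1} only provides $\|\lambda_{\nu}^{(k)}\|=o\!\left(d(x_\nu,\s)^{r-\delta}\right)$, and these vectors cannot be ``also chosen flat to all orders'': their size is forced to be comparable to $\kappa(df(x_\nu))$, because they must move $\grad f_{1}(x_\nu)$ into the span of the corrected gradients (only the free scalars $\epsilon_\nu$ may be taken as flat as you wish). Differentiating $\psi\!\left((x-x_\nu)/d(x_\nu,\s)\right)\langle\lambda_{\nu}^{(k)},x-x_\nu\rangle$ a total of $r+1$ times then gives bounds of size $\|\lambda_{\nu}^{(k)}\|\,d(x_\nu,\s)^{-r}=o\!\left(d(x_\nu,\s)^{-\delta}\right)$, which may be unbounded as $x_\nu\to 0$; hence your $\eta$ need not be of class $C^{r+1}$, $g=f-\eta$ need not lie in $\mathcal{E}_{[r+1]}(n,p)$, and no contradiction with the $C^{r+1}$ analogue of Lemma \ref{transv0} is reached. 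This is exactly where the $C^{r+1}$ case is harder than the $C^{r}$ case treated in Theorem \ref{RelativeKuiperKuo0} (and what the paper's laconic instruction ``replace $r-1$ by $r-\delta$'' conceals): any perturbation supported in a ball of radius comparable to $d(x_\nu,\s)$ whose differential at $x_\nu$ has norm comparable to $\kappa(df(x_\nu))$ necessarily has some derivative of order $r+1$ of size at least a constant times $\kappa(df(x_\nu))\,d(x_\nu,\s)^{-r}$, so to remain in the $C^{r+1}$ class one needs either points where $\kappa(df(x_\nu))\precsim d(x_\nu,\s)^{r}$ (which the mere failure of $(K$-$K_{\s}^{\delta})$ does not supply without an additional Lojasiewicz-type argument) or a genuinely different construction; your proposal supplies neither, so the converse direction remains unproved as written.
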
  

\begin{proof}
This theorem is shown in the same way as Theorem \ref{RelativeKuiperKuo0}.

In the case of $r-\d$, the implication (1) $\implies$ (2) follows,  
after noticing that, by Lemma \ref{lemrflat}, if $F(x,t) := f(x)+th(x)$ 
for $t\in I=[0,1]$ and  $j^rh(\s;0)=0$, 
then $\| h(x)\| \precsim d(x,\s)^{r+1}$ and there exists a small neighbourhood 
$T$ of $t_{0}$ in $I$ such that 
$$
\|F(x,t)-F(x,t_{0})\| \precsim d(x,\s)^{r+1}
$$ 
for any $t \in T.$

On the other hand, we can show the implication (2) $\implies$ (1) 
in the same way as above, by replacing everywhere $r-1$ with $r-\d.$
\end{proof}


\subsection{$\s$-$C^0$-sufficiency of jets in the function case}

In this subsection we restate Theorems \ref{RelativeKuiperKuo0}, 
\ref{RelativeKuiperKuo1} in the function case.
Related to these results, we shall discuss in the next section if
the Bochnak-Lojasiewicz inequality holds in the relative case, and 
the relationship between the relative $C^0$-sufficiency of jets
and the relative $V$-sufficiency of jets through the relationship
between the relative Kuiper-Kuo condition and condition 
$(\widetilde{K}_{\s})$.

\begin{thm}\label{RelativeKuiperKuoThm}
(1)  Let $r$ be a positive integer, and let 
$f\in {\mathcal E}_{[r]}(n,1).$
Then the inequality
\begin{equation*}
\| \grad f(x) \| \succsim d(x,\s)^{r-1} 
\end{equation*} 
holds in some neighbourhood of $0 \in \R^n$ if and only if
the relative $r$-jet $j^r f(\s;0)$ is $\s$-$C^{0}$-sufficient 
in ${\mathcal E}_{[r]}(n,1).$  

(2) Let $r$ be a positive integer, and let $f\in {\mathcal E}_{[r+1]}(n,1).$
Then there is a strictly positive number $\delta$
such that the inequality
\begin{equation*}
\| \grad f(x) \| \succsim d(x,\s)^{r-\delta} 
\end{equation*} 
holds in some neighbourhood of $0 \in \R^n$ if and only if
the relative $r$-jet $j^r f(\s;0)$ is 
$\s$-$C^{0}$-sufficient in ${\mathcal E}_{[r+1]}(n,1)$.
\end{thm}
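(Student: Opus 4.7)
The plan is to recognize that Theorem \ref{RelativeKuiperKuoThm} is nothing more than the specialization of Theorems \ref{RelativeKuiperKuo0} and \ref{RelativeKuiperKuo1} to the case $p=1$. Recall that the Kuo distance of a single vector $v \in \R^n$ equals its norm, i.e.\ $\kappa(v) = \|v\|$, as noted in the paragraph preceding Definition \ref{RKK}. For a $C^r$ (resp.\ $C^{r+1}$) function $f:(\R^n,0)\to(\R,0)$, the differential $df(x)$ is identified with $\grad f(x)$, so
\begin{equation*}
\kappa(df(x)) = \|\grad f(x)\|.
\end{equation*}
Under this identification, the relative Kuiper-Kuo condition $(K\text{-}K_{\s})$ of Definition \ref{RKK} becomes exactly the inequality $\|\grad f(x)\| \succsim d(x,\s)^{r-1}$ appearing in part (1), and the second relative Kuiper-Kuo condition $(K\text{-}K_{\s}^{\delta})$ of Definition \ref{SRKK} becomes exactly the inequality $\|\grad f(x)\| \succsim d(x,\s)^{r-\delta}$ of part (2).

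Consequently, part (1) of the theorem is obtained by applying Theorem \ref{RelativeKuiperKuo0} with $p=1$, and part (2) by applying Theorem \ref{RelativeKuiperKuo1} with $p=1$. Nothing new needs to be proved; one only needs to verify that the notion of $\s$-$C^0$-sufficiency used in Theorem \ref{RelativeKuiperKuoThm} agrees with the one from Theorems \ref{RelativeKuiperKuo0} and \ref{RelativeKuiperKuo1}, which is immediate from the definitions in Section 2.1 since a function is simply a mapping with target dimension $1$.

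There is essentially no obstacle. The function case was in fact already the easier branch of the proof of Theorem \ref{RelativeKuiperKuo0}: the implication (1)$\implies$(2) uses the Kuo vector field construction whose integrability (Proposition \ref{integral}) is independent of $p$, while the converse implication (2)$\implies$(1) simplifies further when $p=1$, as observed at the end of that proof — one does not invoke the Bochnak-Kuo Lemma \ref{bochnakkuo1}, sets $M_{\nu} = V_{\nu}$ with no auxiliary quadratic form $h_\nu$ to construct, and uses $g^{-1}(g(x_\nu))\cap V_\nu$ directly. The same remark applies to Theorem \ref{RelativeKuiperKuo1} upon replacing $r-1$ with $r-\delta$. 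Thus Theorem \ref{RelativeKuiperKuoThm} is a formal corollary of the preceding two results.
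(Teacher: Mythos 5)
Your proposal is correct and coincides with the paper's own treatment: the paper states this theorem explicitly as a restatement of Theorems \ref{RelativeKuiperKuo0} and \ref{RelativeKuiperKuo1} in the function case, where $\kappa(df(x)) = \|\grad f(x)\|$ since $p=1$. Nothing further is needed.
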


\begin{rmk}\label{remKuiperKuo}
X. Xu also has obtained in \cite{Xu} a result that the inequality in 
Theorem \ref{RelativeKuiperKuoThm} (1) implies 
$\s$-$C^{0}$-sufficiency in ${\mathcal E}_{[r]}(n,1).$
\end{rmk}

\begin{example}\label{examRelKK1}
Let $f : (\R^2,0) \to (\R,0)$ be a polynomial function defined by
$$
f(x,y) := x^3,
$$
and let $\s := \{ x = 0 \}$.
Then we can easily see that $d((x,y),\s ) = |x|$, and
$$
\| \mathrm{grad} f(x,y)\| \succsim |x|^2
$$
in a neighbourhood of $(0,0) \in \R^2$.
It follows from Theorem \ref{RelativeKuiperKuoThm}(1) that
$j^3f(\s ;0)$ is $\s$-$C^0$-sufficient in $\mathcal{E}_{[3]}(2,1)$.
\end{example}

\begin{example}\label{examRelKK2}
Let $f_m : (\R^2,0) \to (\R,0)$, $m \ge 3$, be a polynomial function 
(\cite{kuo2}) defined by
$
f_m (x,y) := x^3 - 3xy^m.
$
Then we have $\mathrm{grad} f_m (x,y) = 
(3(x^2 -  y^m), - 3mxy^{m-1})$.

(1) Let $\s := \{ (0,0) \}$.
Then we have $d((x,y),\s ) = \| (x,y)\|$, and
$$
\| \mathrm{grad} f_m (x,y)\| \succsim \| (x,y)\|^{\frac{3m}{2}-1}
$$
in a neighbourhood of $(0,0) \in \R^2$.
We can check the above inequality, dividing a neighbourhood 
of $0 \in \R^2$ into the following three regions:
$$
A := \{ 3|x^2 - y^m| \le |x|^2 \} , \ \
B := \{ 3|x|^2 \le |y|^m \} , \ \
\R^2 \setminus (A \cup B).
$$

By the Kuiper-Kuo theorem \cite{kuiper}, \cite{kuo1}, 
$j^{\frac{3m-1}{2}}f(0)$ is $C^0$-sufficient in 
$\mathcal{E}_{[\frac{3m+1}{2}]}(2,1)$ if $m$ is odd,
and $j^{\frac{3m}{2}}f(0)$ is $C^0$-sufficient in 
$\mathcal{E}_{[\frac{3m}{2}]}(2,1)$ if $m$ is even.

(2) Let $\s := \{ x = 0 \}$.
Then we can see that
\begin{equation}\label{Lojasiewiczinequality1}
\| \mathrm{grad} f_m (x,y)\| \succsim |x|^{3 - \frac{2}{m}}
\end{equation}
in a neighbourhood of $(0,0) \in \R^2$.

We can show (\ref{Lojasiewiczinequality1}) as follows.
Let $\lambda : (\R,0) \to (\R^2,0)$ be an arbitrary analytic arc on $\R^2$
passing through $(0,0) \in \R^2$, not identically zero, denoted by
$$
\lambda (t) = (a_k t^k + \cdots , b_s t^s + \cdots ).
$$

In the case where $\lambda (t) = ( 0, b_s t^s + \cdots )$, $b_s \neq 0$,
$\lambda$ is contained in $\s$, and $d((x,y), \s ) = |x| = 0$ on $\lambda$.
Therefore we have
$
\| \mathrm{grad} f_m (x,y)\| \succsim |x|
$
on $\lambda$.
Thus we may assume after this that $a_k \ne 0$.

In the case where $2k < ms$, we have
$
\| \mathrm{grad} f_m (x,y)\| \succsim |t|^{2k}, \ |x| \thickapprox |t|^k
$
on $\lambda$. Therefore we have
$
\| \mathrm{grad} f_m (x,y)\| \succsim |x|^2
$
on $\lambda$ near $(0,0) \in \R^2$.

In the case where $2k \ge ms$, we have
$$
\| \mathrm{grad} f_m (x,y)\| \ge |\frac{\partial f_m}{\partial y}(x,y)|  
\succsim |t|^k |t|^{(m-1)s} \succsim 
|t|^{k + \frac{2(m-1)}{m} k} = |t|^{(3 - \frac{2}{m})k}, \
\ |x| \thickapprox |t|^k
$$
on $\lambda$. Therefore we have
$
\| \mathrm{grad} f_m (x,y)\| \succsim |x|^{3 - \frac{2}{m}}
$
on $\lambda$ near $(0,0) \in \R^2$.

Thus we have (\ref{Lojasiewiczinequality1}) in a neighbourhood
of $(0,0) \in \R^2$.

Note that 
$\dis
\frac{\partial f_m}{\partial x}(t^k,t^s) \equiv 0, \ \ 
\left|\frac{\partial f_m}{\partial y}(t^k,t^s)\right| = 3m|t|^{(3-\frac{2}{m})k}
$
in the case where $2k = ms$.
Therefore it follows from Theorem \ref{RelativeKuiperKuoThm}(1), (2) that
$j^3f_m (\s ;0)$ is not $\s$-$C^0$-sufficient in $\mathcal{E}_{[3]}(2,1)$
but $\s$-$C^0$-sufficient in $\mathcal{E}_{[4]}(2,1)$ for any $m \ge 3.$

(3) Let $\s := \{ y = 0 \}$.
Then, using a similar computation to the above one, we can see that
$
\| \mathrm{grad} f_m (x,y)\| \succsim |y|^{\frac{3m}{2}-1}
$
in a neighbourhood of $(0,0) \in \R^2$.
It follows from Theorem \ref{RelativeKuiperKuoThm}(1), (2) that
$j^{\frac{3m-1}{2}}f(\s ;0)$ is $\s$-$C^0$-sufficient in 
$\mathcal{E}_{[\frac{3m+1}{2}]}(2,1)$ if $m$ is odd,
and $j^{\frac{3m}{2}}f(\s ;0)$ is $\s$-$C^0$-sufficient in 
$\mathcal{E}_{[\frac{3m}{2}]}(2,1)$ if $m$ is even.
\end{example}

\section{Relative $V$-sufficiency of jets}

\subsection{Relative $V$-sufficiency of $r$-jets in $C^r$ mappings}
In this subsection we discuss the relationship between the Kuo condition
and $V$-sufficiency of $r$-jets in $C^r$ mappings which are relative to 
the closed set $\s \subset \R^n$ such that $0 \in \s$.
\begin{thm}\label{RelativeKuoThm}
Let $r$ be a positive integer, and let
$f \in {\mathcal E}_{[r]}(n,p)$, $n \ge p$.
If $f$ satisfies condition $(\widetilde{K}_{\s})$,
then the relative $r$-jet, $j^r f(\s;0)$, is $\s$-$V$-sufficient 
in ${\mathcal E}_{[r]}(n,p)$.
\end{thm}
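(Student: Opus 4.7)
The plan is to follow a Kuo-type trivialisation of the family $F^{-1}(0)\subset\R^n\times I$. Fix an arbitrary realisation $g\in\mathcal{E}_{[r]}(n,p)$ of $j^rf(\s;0)$, set $h:=g-f$ and $F(x,t):=f(x)+th(x)$ for $t\in I:=[0,1]$. Lemma~\ref{lemrflat} applied to $h$, and with $k=r-1$ to each entry of $dh$, yields $\|h(x)\|=o(d(x,\s)^r)$ and $\|dh(x)\|=o(d(x,\s)^{r-1})$. Combining these with the inequality $\nu(T+T')\geq\nu(T)-\|T'\|$ and the equivalence $\nu\thickapprox\kappa$, condition $(\widetilde{K}_{\s})$ for $f$ propagates to $F_t$ uniformly in $t\in I$:
\[
d(x,\s)\,\kappa(dF(x,t))+\|F(x,t)\|\succsim d(x,\s)^r
\]
in a neighbourhood of $0$. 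Choosing $\bar w>0$ sufficiently small, this upgrades on the relative horn neighbourhood
\[
\mathcal{H}:=\{(x,t)\in\R^n\times I:\|F(x,t)\|\leq\bar w\,d(x,\s)^r\}
\]
to $\kappa(dF(x,t))\succsim d(x,\s)^{r-1}$, which then holds on $F^{-1}(0)\setminus\s\times I\subset\mathcal{H}$ and implies that $\grad_xF_1(x,t),\ldots,\grad_xF_p(x,t)$ are linearly independent there.

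On $\mathcal{H}\setminus\s\times I$ I introduce the Kuo vector field $Y(x,t):=\sum_{k=1}^pc_k(x,t)\grad_xF_k(x,t)$, where the coefficients $(c_k)$ are characterised by $\langle\grad_xF_j,Y\rangle=-h_j(x)$ for each $j=1,\ldots,p$. By construction the tangency equations $\partial_tF_j(x,t)+\langle\grad_xF_j(x,t),Y(x,t)\rangle=0$ hold, so $F$ is constant along integral curves of $X:=\partial/\partial t+Y$, and the Gram matrix $A(x,t):=(\langle\grad_xF_j,\grad_xF_k\rangle)$ has smallest eigenvalue $\nu(dF)^2\thickapprox\kappa(dF)^2$, giving
\[
\|Y(x,t)\|^2=\langle h(x),A(x,t)^{-1}h(x)\rangle\precsim\frac{\|h(x)\|^2}{\kappa(dF(x,t))^2}\precsim\frac{o(d(x,\s)^{2r})}{d(x,\s)^{2r-2}}=o(d(x,\s)^2).
\]
Hence $\|Y\|\precsim d(x,\s)$ on $\mathcal{H}$.

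To globalise, I multiply $Y$ by a smooth cutoff $\chi(x,t)$ equal to $1$ on an intermediate horn neighbourhood $\mathcal{H}'\subset\mathcal{H}$ containing $F^{-1}(0)$ and supported in $\mathcal{H}$, and extend $\chi Y$ by $0$ across $\s\times I$. The bound $\|\chi Y\|\precsim d(x,\s)$ persists on $(\R^n\setminus\s)\times I$, so Proposition~\ref{integral} produces a continuous local flow of $X:=\partial/\partial t+\chi Y$ around $0$ which fixes $\s\times I$ pointwise. Because $\chi Y=Y$ on $\mathcal{H}'$ and $Y$ is tangent there to the level sets of $F$, the orbit of any $(x_0,0)\in f^{-1}(0)\times\{0\}$ stays inside $F^{-1}(0)\cap\mathcal{H}'$, and following it from $t=0$ to $t=1$ yields the desired germ homeomorphism from $f^{-1}(0)$ onto $g^{-1}(0)$ fixing $f^{-1}(0)\cap\s$ pointwise. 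Thus $f$ and $g$ are $\s$-$V$-equivalent.

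The main obstacle is the Lipschitz-type bound $\|Y\|\precsim d(x,\s)$, as it is precisely what renders Proposition~\ref{integral} applicable; it rests on both the uniform lower bound $\kappa(dF)\succsim d(x,\s)^{r-1}$ inside the horn neighbourhood (equivalently the $\s$-$r$-compatibility of $(\widetilde{K}_{\s})$ in the direction $g$) and on the $o(d(x,\s)^r)$-flatness of $h$ coming from Lemma~\ref{lemrflat}. A secondary point, handled by the cutoff $\chi$, is to simultaneously keep $X$ integrable near $\s$ and tangent to $F^{-1}(0)$ on a neighbourhood of it.
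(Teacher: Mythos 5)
Your proposal is correct and takes essentially the same route as the paper: you propagate the relative Kuo estimate to $F_t=f+th$ using $\nu(T+T')\ge\nu(T)-\Vert T'\Vert$ together with the flatness of $h$ (and $dh$) from Lemma \ref{lemrflat}, construct the Kuo vector field tangent to the levels of $F$ (your Gram-matrix characterisation $\langle\grad_x F_j,Y\rangle=-h_j$ defines exactly the paper's field $\sum_j h_j N_j/\Vert N_j\Vert^2$ up to sign), bound it by $d(x,\s)$, and integrate via Proposition \ref{integral}. The remaining differences — a single uniform-in-$t$ estimate plus a cutoff to globalise, instead of the paper's localisation near each $t_0$ with compactness of $[0,1]$ and restriction to the horn neighbourhood — are presentational and do not affect the argument.
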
  
\begin{proof}
Because of the same reason as the theorem above, we may assume
that $r \ge 2$.

Let $g \in {\mathcal E}_{[r]}(n,p)$ be an arbitrary mapping such that 
$j^r g(\s ;0) = j^r f(\s ;0)$.
We define a $C^r$ mapping $h:(\mathbb{R}^n,0)\to (\mathbb{R}^p,0)$ by 
$h(x) := g(x) - f(x)$.
Then $j^rh(x)=0$ for any $x\in \s.$ 
By Lemma \ref{lemrflat}, $\| h(x)\| = o(d(x,\s)^r).$
Let $F(x,t) := f(x)+th(x)$ for $t\in I=[0,1],$ and $t_0 \in I.$
Then there exists a small neighbourhood $T$ of $t_{0}$ in $I$ such that 
$$
\|F(x,t)-F(x,t_{0})\|\leq \bar w\ \ds^{r}
$$ for any $t \in T$.
Thus $F(x,t)=0$ is contained in 
$\left({\mathcal H}^{\s}_r(F(x,t_{0}); \bar w)\cap 
\{\| x\| < \alpha\}\right)\times T$, 
hence we will concentrate our attention to this set.

Moreover there are $\bar{w}$, $\alpha > 0$ such that
$$
\nu (d_{x}F( x,t_{0})) = \nu (df(x)+t_{0} dh(x)) \geq 
\nu (df(x))-t_{0}\Vert dh(x)\Vert \geq \frac{C}{2}d(x,\s)^{r-1}
$$ 
for $x\in {\mathcal H}^{\s}_{r}(f(x); \bar w)\cap \{\| x\| < \alpha\}.$ 
Then   there exists $C'>0$ such that
\begin{equation}\label{eq2-1}
\kappa(d_{x}F(x,t))\geq C'\ds^{r-1}
\end{equation}
for $(x,t)\in \left({\mathcal H}^{\s}_r(F(x,t_{0}); \bar w)\cap 
\{\| x\| < \alpha\}\right)\times T.$

Set  
$W := \left({\mathcal H}^{\s}_{r}(F(x,t_{0}); \bar w)\cap 
\{\| x\| < \alpha\}\right)\times T.$ 
Thus, for $(x,t)\in W\setminus \s\times T$ the vectors 
$\grad_{x}F_j(x,t)$ $(1\leq j\leq p)$ are linearly independent.
Let for $(x,t)\in W\setminus \s\times T$, $V_{x,t}$ be the subspace spanned by the $\{\grad_{x}F_1(x,t),\ldots,\grad_{x}F_p(x,t)\}.$ 

Let us consider now $\{N_1(x,t), \ldots N_p(x,t)\}$ the basis of $V_{x,t}$ 
constructed in the case of relative $C^0$-sufficiency of jets:
$\dis N_j(x,t)=\grad_{x}F_j(x,t)-\tilde N_j(x,t), \ (1\leq j\leq p),$ where $\tilde N_j(x,t)$ is the projection 
of $\grad_{x}f_j(x,t)$ to
the subspace $ V^{j}_{x,t}$ spanned by the $\grad_{x}F_k(x,t),$ $k\ne j.$
Then, for any $j\in\{1,\dots,p\}$ and $(x,t)\in W,$
\begin{equation*}
\| N_j(x,t)\|\geq \kappa(d_{x}F(x,t)) \geq C'\ds^{r-1}.
\end{equation*}

To trivialise the family of zero sets, we use a version of Kuo vector field
as in the proof of Theorem \ref{RelativeKuiperKuo0}.

\begin{equation*}\label{eq3-1}
X(x,t)= \begin{cases}\dis
\frac{\partial }{\partial t}+\sum_{j=1}^p h_{j}(x)\frac{N_j(x,t)}{\| N_j(x,t)\|^2} & \text{ if } (x,t) \in W\setminus \s\times T\\
\dis\quad \frac{\partial }{\partial t}& \text{ if } (x,t) \in 
W \cap \s\times T.
\end{cases}
\end{equation*}

Since, 
\begin{equation*}\label{eq3-2}
\left\| \sum_{j=1}^p h_{j}(x)\frac{N_j(x,t)}{\| N_j(x,t)\|^2}\right\|  \precsim \frac{ \|h(x)\|}{\| N_j(x,t)\|}\precsim \ds\end{equation*}
by  Proposition \ref{integral}, the following system of differential equations:
\begin{equation}\label{eq5-52}
y'=X(y,t).
\end{equation}
is integrable.
Now for $(x,t)\in W$ define $\gamma_{(x,t)}$ to be the maximal 
solution of (\ref{eq5-52})  such that $\gamma_{(x,t)}(t)=x.$ 

Let $H_{{0}}, \tilde{H}_{0}: W \rightarrow 
{\mathcal H}^{\s}_{r}(F(x,t_{0}); \bar w ) \cap \{ \| x \| < \alpha \}$
be given by $H_{0}(x,\ t) =\gamma_{(x,t_{0})}(t)$ and 
$\tilde{H}_{0}(y,\ t) =\gamma_{(y,t)}(t_{0}).$
By Proposition \ref{integral}, the mappings $H_{0}$ and $\tilde{H}_{0}$ 
are continuous and by uniqueness of the solutions of (\ref{eq5-52}),  
we have for any $(x,t)\in W$ 
$$
\tilde{H}_{0}(H_{0}(x,\ t),\ t) =x,\ H_{0}(x,\ t_{0})=x, \text{ and } 
H_{0}(\tilde{H}_{0}(y,\ t),t) = y, \, H_{0}(x,t)=x
$$
and $F( \gamma_{(x,t_{0})}(t),t)=F(x,t_{0})$ for all $t\in T.$
We have 
$
f(H_{0}(x,\ t ))+th(H_{0}(x,\ t))=F(x,t_{0}),
$
for $(x,t)\in W$ since on $\s,$ $h\equiv 0.$  
In particular, for all $t,t'\in T$, the germs of $F(x,t)=0$ and $F(x,t')=0$
at $0 \in \R^n$ are $\s$-homeomorphic.

Finally, using the same compactness argument as above, we obtain that 
the germs of zero-sets $f(x)=0$ and $g(x)=0$ at $0 \in \R^n$ are 
$\s$-homeomorphic. 
\end{proof}

\begin{thm}\label{RelativeKuoThm1}
Let $r$ be a positive integer, and let 
$f \in {\mathcal E}_{[r]}(n,p)$ where $n > p$. 
Then the following conditions are equivalent.
\begin{enumerate}[(1)]
\item $f$ satisfies the relative Kuo condition $(K_{\s})$.
\item $f$ satisfies condition $(\widetilde{K}_{\s})$.
\item The relative $r$-jet $j^r f(\s;0)$ is $\s$-$V$-sufficient 
in ${\mathcal E}_{[r]}(n,p)$.
\end{enumerate}
\end{thm}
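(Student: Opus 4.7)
\bigskip

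The equivalence (1)$\iff$(2) is already noted in Remark \ref{remark210}(2), and (2)$\implies$(3) is exactly Theorem \ref{RelativeKuoThm}. It therefore remains only to establish (3)$\implies$(1), or equivalently (3)$\implies$(2). The plan is to argue by contradiction, adapting the strategy used for the $\s$-$C^0$-sufficiency direction of Theorem \ref{RelativeKuiperKuo0}, but carried out entirely inside the horn-neighbourhood so that the modified realisation acquires an $\s$-$V$-obstruction rather than only a $\s$-$C^0$-obstruction.

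Assume $(\widetilde{K}_\s)$ fails. Using the equivalence with $(K_\s)$ and a diagonal extraction, we first produce a sequence $x_\nu\to 0$ in $\R^n\setminus\s$ with
$$
\|f(x_\nu)\|=o(d(x_\nu,\s)^r),\qquad \kappa(df(x_\nu))=o(d(x_\nu,\s)^{r-1}),
$$
and we thin it so that $\|x_{\nu+1}\|<\tfrac13 d(x_\nu,\s)$, so the balls $B_\nu=\{\|x-x_\nu\|\le\tfrac14 d(x_\nu,\s)\}$ are pairwise disjoint and disjoint from $\s$. By Remark \ref{remark210}(3), a rotation lets us assume $\kappa(df(x_\nu))$ is realised by $\grad f_1(x_\nu)$, to which we apply Lemma \ref{bochnakkuo1} with $\alpha_\nu=d(x_\nu,\s)$ and $s=r-1$. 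This yields vectors $\lambda_\nu^{(k)}$ ($k=2,\dots,p$) with $\|\lambda_\nu^{(k)}\|=o(d(x_\nu,\s)^{r-1})$, linear independence of $\grad f_k(x_\nu)+\lambda_\nu^{(k)}$, and scalars $a_{k,\nu}$ with $\grad f_1(x_\nu)=\sum_{k\ge 2}a_{k,\nu}(\grad f_k(x_\nu)+\lambda_\nu^{(k)})$.

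Next, I construct a realisation $g$ of $j^rf(\s;0)$ with $g(x_\nu)=0$ and an engineered singularity of $g^{-1}(0)$ at $x_\nu$. Using a standard cut-off $\psi$ supported in $\{|t|\le\tfrac14\}$ and equal to $1$ near $0$, set
\begin{equation*}
\eta_k(x)=\psi\!\left(\frac{x-x_\nu}{d(x_\nu,\s)}\right)\!\bigl[f_k(x_\nu)-\langle\lambda_\nu^{(k)},x-x_\nu\rangle\bigr]\quad(k\ge 2),
\end{equation*}
\begin{equation*}
\eta_1(x)=\psi\!\left(\frac{x-x_\nu}{d(x_\nu,\s)}\right)\!\bigl[f_1(x_\nu)+\epsilon_\nu Q(x-x_\nu)\bigr],
\end{equation*}
on each $B_\nu$ (and zero elsewhere), with $Q$ a quadratic form of signature $(1,n-1)$ and $\epsilon_\nu=o(d(x_\nu,\s)^{r-2})$. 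The size bounds on $f(x_\nu)$, $\lambda_\nu^{(k)}$, and $\epsilon_\nu$ guarantee $\eta=o(d(x,\s)^r)$, so $g:=f-\eta$ realises the same relative $r$-jet. By construction $g(x_\nu)=0$, the $\grad g_k(x_\nu)$ ($k\ge 2$) are linearly independent, and $\grad g_1(x_\nu)=\sum_{k\ge 2}a_{k,\nu}\grad g_k(x_\nu)$. Hence the manifold $M_\nu=\{g_k=0,\ k\ge 2\}$ has dimension $n-p+1\ge 2$, the function $h_\nu:=\bigl(g_1-\sum a_{k,\nu}g_k\bigr)|_{M_\nu}$ has $x_\nu$ as a critical point with $h_\nu(x_\nu)=0$, and we pick $\epsilon_\nu$ (using $Q$) so that $x_\nu$ is a non-degenerate Morse critical point of intermediate index $0<\mu<\dim M_\nu$. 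Thus $g^{-1}(0)\cap V_\nu=h_\nu^{-1}(0)\cap M_\nu$ is, locally at $x_\nu$, a non-trivial quadric cone, hence not a topological manifold of codimension $p$.

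Finally, I derive the contradiction via the $V$-analogue of Lemma \ref{transv0}: if $j^rf(\s;0)$ is $\s$-$V$-sufficient, then for every realisation $\theta$ and every sequence $y_m\to 0$ in $\theta^{-1}(0)\setminus\s$, the set $\theta^{-1}(0)$ is a topological manifold of codimension $p$ near $y_m$ for large $m$. The proof mirrors Lemma \ref{transv0}, replacing $\theta^{-1}(\theta(x_m))$ by $\theta^{-1}(0)$: smooth out the realisation off $\s$ via Remark \ref{realisation}(2), then use the parametric Sard theorem to produce a smooth realisation $g_0$ of the same relative jet for which $0$ is a regular value on $\R^n\setminus\s$, and push the resulting smooth structure through the $\s$-$V$-homeomorphism. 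Applied to our $g$ and the sequence $\{x_\nu\}\subset g^{-1}(0)\setminus\s$, this forces $g^{-1}(0)$ to be a topological manifold of codimension $p$ near $x_\nu$ for large $\nu$, directly contradicting the previous paragraph. The decisive step—where the hypothesis $n>p$ is used and where I expect the main technical care to be required—is the Hessian-signature control in the construction of $h_\nu$: one must choose $Q$ and $\epsilon_\nu$ simultaneously respecting the smallness bound $\epsilon_\nu=o(d(x_\nu,\s)^{r-2})$, ensuring linear independence of $\grad g_k(x_\nu)+\lambda_\nu^{(k)}$, and yielding a non-degenerate intermediate-index quadratic form on $T_{x_\nu}M_\nu$ (which is possible precisely because $\dim M_\nu=n-p+1\ge 2$).
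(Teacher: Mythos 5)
Your skeleton is the paper's: (1)$\iff$(2) by Remark \ref{remark210}, (2)$\implies$(3) by Theorem \ref{RelativeKuoThm}, and for (3)$\implies$(2) the same contradiction scheme --- the sequence $x_\nu$, Lemma \ref{bochnakkuo1} with $s=r-1$, the bump-function realisation $g=f-\eta$ vanishing at the $x_\nu$, and a Sard-type lemma (your ``$V$-analogue of Lemma \ref{transv0}'' is in substance the paper's Lemma \ref{transv}, combined with the observation that all realisations have the same zero set on $\s$, so the zero-set homeomorphism cannot move $x_\nu\notin\s$ into $\s$). The one point where you genuinely depart is the construction of $\eta_1$: you replace $\epsilon_\nu\|x-x_\nu\|^2$ by $\epsilon_\nu Q(x-x_\nu)$ with $Q$ indefinite and require $x_\nu$ to become a Morse point of $h_\nu|_{M_\nu}$ of \emph{intermediate} index, so as to force a quadric cone. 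That step does not go through. Near $x_\nu$ the functions $\eta_k$, $k\ge 2$, are affine, so the Hessian of $h_\nu|_{M_\nu}$ at $x_\nu$ is the restriction to $T_{x_\nu}M_\nu$ of the second derivative of $f_1-\sum_{k\ge2}a_{k,\nu}f_k$ minus $2\epsilon_\nu$ times the matrix of $Q$; the admissible perturbation thus has norm of order $\epsilon_\nu$ with $\epsilon_\nu=o(d(x_\nu,\s)^{r-2})\to 0$, while the background form is dictated by $f$ and may perfectly well be definite with eigenvalues bounded below independently of $\nu$ (already for $p=1$, where $h_\nu=f-\eta_1$ and the Hessian of $f$ can be uniformly positive definite). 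In such cases no choice of $Q$ and of admissibly small $\epsilon_\nu$ produces mixed signature, so the step you yourself single out as decisive cannot be carried out in general.

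It is also unnecessary, and this is where your reading of the role of $n>p$ differs from the actual one. As in the paper, choose $\epsilon_\nu$ (avoiding the finitely many critical values of the parameter) only so that the critical point is \emph{non-degenerate}. If the restricted form is indefinite you get your cone; if it is definite, then $g^{-1}(0)\cap V_\nu=\{x_\nu\}$ is an isolated point, and an isolated point is not a topological manifold of dimension $n-p\ge 1$. The hypothesis $n>p$ is used exactly there --- to exclude the isolated-point case as well --- and not, as you suggest, to guarantee $\dim M_\nu\ge 2$ so that an intermediate index is available (when $n=p$ an isolated zero \emph{is} a codimension-$p$ manifold, which is why that case needs the separate subanalytic argument of Theorem \ref{RelativeKuoThm2}). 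With this correction, the rest of your argument coincides with the paper's proof.
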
  

\begin{thm}\label{RelativeKuoThm2}
Let $r$ be a positive integer, and let 
$f \in {\mathcal E}_{[r]}(n,n)$.
Suppose that $j^rf(\s ;0)$ has a subanalytic $C^r$-realisation
and that $\s$ is a subanalytic closed subset of $\R^n$ 
such that $0 \in \s$. 
Then the following conditions are equivalent.
\begin{enumerate}[(1)]
\item $f$ satisfies the relative Kuo condition $(K_{\s})$.
\item $f$ satisfies condition $(\widetilde{K}_{\s})$.
\item The relative $r$-jet $j^r f(\s;0)$ is $\s$-$V$-sufficient 
in ${\mathcal E}_{[r]}(n,n)$.
\end{enumerate}
\end{thm}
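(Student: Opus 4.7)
The equivalence (1) $\iff$ (2) is already contained in Remark \ref{remark210}(2), which holds with no restriction on $n$, $p$, or subanalyticity. The implication (2) $\implies$ (3) is exactly the content of Theorem \ref{RelativeKuoThm}, which is stated for all $n \geq p$. So the substantive new content, and the place where both the equidimensionality $n = p$ and the subanalytic hypothesis matter, is the converse implication (3) $\implies$ (1). The approach is to mirror the proof of Theorem \ref{RelativeKuiperKuo0} (the implication (2) $\implies$ (1), in the case $n = p$), replacing the construction used there by one that destroys the topology of $f^{-1}(0)$, and replacing the sequential ``bad points'' with an analytic arc extracted via the subanalytic curve selection lemma.

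Concretely, suppose (1) fails. For each $\nu \in \mathbb{N}$, the set
\[
A_\nu := \{x \in \R^n : \|f(x)\| \leq \tfrac{1}{\nu}\, d(x,\s)^r, \ \kappa(df(x)) \leq \tfrac{1}{\nu}\, d(x,\s)^{r-1}\}
\]
is subanalytic (since $\kappa \thickapprox \eta$ and $\eta$ is subanalytic in the entries of $df$, see Definition \ref{minor} and Lemma \ref{equiv2}), and $0 \in \overline{A_\nu}$. Applying the subanalytic curve selection lemma, I extract an analytic arc $\lambda : [0,\beta) \to \R^n$, $\lambda(0)=0$, $\lambda(t) \notin \s$ for $t > 0$, on which both inequalities are violated in the $o(\cdot)$ sense. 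Along a sparse subsequence $x_\nu := \lambda(t_\nu) \to 0$ with $\|x_{\nu+1}\| < \tfrac13 d(x_\nu, \s)$, the Kuo distance $\kappa(df(x_\nu))$ is realised (up to comparability) by a $j_\nu$ such that $\mathrm{grad}\,f_{j_\nu}(x_\nu)$ is $o(d(x_\nu,\s)^{r-1})$-close to $\sum_{k\ne j_\nu} \R\,\mathrm{grad}\,f_k(x_\nu)$. Applying the Bochnak-Kuo Lemma \ref{bochnakkuo1} with $s = r-1$, I obtain perturbation vectors $\lambda_\nu^{(k)} = o(d(x_\nu,\s)^{r-1})$, and build $g = f - \eta$ with $\eta$ supported in disjoint balls $B_\nu \subset \{\|x - x_\nu\| \le \tfrac14 d(x_\nu,\s)\}$, $\|\eta\| = o(d(\cdot,\s)^r)$, exactly as in the proof of Theorem \ref{RelativeKuiperKuo0}. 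Then $g$ is a $C^r$-realisation of $j^r f(\s;0)$, and, by a suitable choice of the scale $\epsilon_\nu = o(d(x_\nu,\s)^r)$, one arranges simultaneously that $g(x_\nu) = 0$ and that, in the $n = p$ case, the restriction of $g$ to a $1$-dimensional auxiliary manifold $M_\nu$ through $x_\nu$ is a non-degenerate quadratic form, hence not injective. This yields a point $y_\nu \in B_\nu \setminus (\s \cup \{x_\nu\})$ with $g(y_\nu) = g(x_\nu) = 0$.

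To finish, I invoke an $\s$-$V$-analogue of Lemma \ref{transv0}. Using again the subanalyticity of $f$ and Sard's theorem applied to a smooth extension $F(x,y) = \tilde f(x) + y\cdot h(x)$ (with $h$ a flat function vanishing exactly on $\s$), the germ $f^{-1}(0) \setminus \s$ is subanalytic and off the exceptional set $\s$ has only isolated, non-clustering points accumulating at $0$; any $\s$-$V$-equivalent realisation inherits this property through $\varphi \in \mathcal{R}_\s^{\text{fix}}$. But $g^{-1}(0)$ contains the pair $\{x_\nu, y_\nu\}$ in each $B_\nu$, producing clusters that accumulate at $0$ at rate $d(x_\nu,\s)$, and no germ of homeomorphism fixing $\s$ can identify this clustered structure with the non-clustered one of $f^{-1}(0)\setminus \s$. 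Contradiction.

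\textbf{Main obstacle.} The hard step is the last one: making precise, in the equidimensional case, the topological invariant of $f^{-1}(0)$ near $\s$ that rules out the clusters in $g^{-1}(0)$. In $n > p$ (Theorem \ref{RelativeKuoThm1}) one can argue through fibre dimension, which is unavailable here because fibres are $0$-dimensional. The substitute is the subanalytic Sard argument via $F(x,y)$, and this is precisely where the assumptions that $\s$ be subanalytic closed and that $j^r f(\s;0)$ admit a subanalytic $C^r$-realisation are indispensable; without them the curve selection lemma and the Sard extraction of a generic nearby realisation both fail, and the implication (3) $\implies$ (1) genuinely breaks down.
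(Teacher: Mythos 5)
Your reduction is the same as the paper's: (1) $\iff$ (2) is Remark \ref{remark210}(2), (2) $\implies$ (3) is Theorem \ref{RelativeKuoThm}, and the substance is (3) $\implies$ (2) by contradiction, using Lemma \ref{bochnakkuo1} to build a realisation $g=f-\eta$, supported in disjoint balls, with prescribed zeros $x_\nu\notin\s$. Up to that point your outline essentially matches the paper, with one caveat: your sets $A_\nu$ are built from $f$, which is only $C^r$ and not assumed subanalytic, so they need not be subanalytic and the curve selection lemma cannot be applied to them as stated; you would first have to replace $f$ by the subanalytic realisation $\overline f$ (legitimate, since $(K_\s)$ and $(\widetilde K_\s)$ are conditions on the jet, by Lemma \ref{lemrflat} applied to $g-f$ and to its differential). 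In fact the paper needs no curve at this stage at all --- a sequence of bad points suffices for the construction of $g$.

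The genuine gap is in your endgame. In the equidimensional case the auxiliary manifold $M_\nu$ is $1$-dimensional, and $h_\nu|_{M_\nu}$ has a non-degenerate critical point at $x_\nu$ with critical value $0$; a non-degenerate quadratic function of one variable has an isolated zero, so $g^{-1}(0)\cap V_\nu=\{x_\nu\}$. Non-injectivity of $h_\nu$ produces pairs of points with equal \emph{nonzero} values --- that is the mechanism used for $\s$-$C^0$-sufficiency (Lemma \ref{transv0}), where level sets $g^{-1}(g(x))$ matter --- but it produces no second zero $y_\nu$, so the ``clustered zero set'' you want to contrast with $f^{-1}(0)$ does not exist. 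Moreover your claim that $f^{-1}(0)\setminus\s$ consists of isolated, non-clustering points is unsupported: $f$ itself is not subanalytic, the Sard argument gives information about a perturbed realisation rather than about $f$, and $\s$-$V$-equivalence provides only a homeomorphism of zero-set germs fixing $f^{-1}(0)\cap\s$, not an ambient $\varphi\in\mathcal{R}_\s^{\text{fix}}$. The paper's contradiction runs in the opposite direction: the constructed $g$ merely certifies that $g^{-1}(0)\setminus\s\neq\emptyset$, hence by $\s$-$V$-sufficiency $f^{-1}(0)\setminus\s\neq\emptyset$ and then $\overline f^{-1}(0)\setminus\s\neq\emptyset$ for the subanalytic realisation $\overline f$; the curve selection lemma (this is exactly where subanalyticity of $\overline f$ and of $\s$ is used) yields an analytic arc of zeros of $\overline f$ off $\s$, i.e.\ a one-dimensional zero locus, whereas Lemma \ref{transv} (the Sard argument with $F(x,y)=\tilde f(x)+y\,h(x)$) produces a realisation $g_0$ whose zero set off $\s$ is a non-empty smooth submanifold of codimension $n$, i.e.\ zero-dimensional; these two facts are incompatible with $\s$-$V$-sufficiency. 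Your argument would need to be rebuilt along these lines.
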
  

\begin{proof}[Proofs of Theorem \ref{RelativeKuoThm1} and Theorem 
\ref{RelativeKuoThm2}]

We first assume that $f \in {\mathcal E}_{[r]}(n,p)$, $n \ge p$,
and we do not necessarily assume that $j^rf(\s ;0)$ has a subanalytic 
$C^r$-realisation or $\s$ is subanalytic in the case where $n = p$.

As mentioned in Remark \ref{remark210}(2), conditions (1) and (2) are 
equivalent.
The implication (1) $\implies$ (3) follows from Theorem \ref{RelativeKuoThm}.
Therefore we shall show that condition (3) implies condition (2).
Namely, $\s$-$V$-sufficiency of jets implies condition
$(\widetilde{K}_{\s})$.

Let the relative $r$-jet $j^r f(\s ;0)$ be $\s$-$V$-sufficient
in ${\mathcal E}_{[r]}(n,p)$.
Suppose by reductio ad absurdum, that $(\widetilde{K}_{\s})$
is not satisfied. 
One can then find a sequence $ (x_\nu)_{\nu\geq 1} $ of points of 
$\mathbb{R}^n \setminus \s$ converging to $0 \in \R^n$ such that 
\begin{equation}\label{kk3}
d(x_\nu,\s)\kappa(df(x_\nu))+\|f(x_\nu)\|=o(d(x_\nu,\s)^{r}).
\end{equation}  

Extracting a subsequence from $(x_\nu)_{\nu\geq 1}$ if necessary, 
one can assume that
$$
\Vert x_{\nu+1}\Vert<\frac{1}{3} d(x_\nu, \s) 
$$ 
(which implies, in particular, that $ d(x_\nu,\s) $ decreases),
and that condition \eqref{kk3} implies:
\begin{enumerate}[1)]
\item $|f_{k}(x_\nu)|=o(d(x_\nu,\s)^{r})$, for all  $1\leqq k\leqq p$;
\item $\delta_{\nu}=o(d(x_\nu,\s)^{r-1})$
where 
$
\dis\delta_{\nu} := \kappa(df(x_\nu)) =
\dist(\mathrm{grad}f_{j}(x_\nu),\sum_{k \ne j} \mathbb{R} \ 
\mathrm{grad}f_{k}(x_\nu))
$\\
for some $j$, $1 \le j \le p$.
By Remark \ref{remark210}(3), we may assume $j = 1$ after this.
\end{enumerate}

Now we apply Lemma \ref{bochnakkuo1}, with 
$u_{\nu}^{(k)}=\mathrm{grad}f_{k}(x_\nu)$, $\alpha_{\nu}=d(x_\nu,\s)$ 
and $s = r - 1$, to find for each $\nu\in \mathbb{N},$ $p-1$ vectors, 
$\lambda_{\nu}^{( 2)}, \ldots, \lambda_{\nu}^{( p)}\in \mathbb{R}^{n}$ 
such that:

\begin{enumerate}[$\bf (a)$]
\item $\|\lambda_{\nu}^{(k)}\|=o(d(x_\nu,\s)^{r-1}),\ k=2,,\ldots,\ p;$
\item  $\mathrm{grad}f_{2}(x_{\nu})+\lambda_{\nu}^{( 2)},\ \ldots, \mathrm{grad}f_{p}(x_{\nu})+\lambda_{\nu}^{( p)}$ are linearly independant in  
$\mathbb{R}^{n}$;
\item $\dis \mathrm{grad}f_{1}(x_{\nu})\in\sum_{k=2}^{p}\mathbb{R} \ 
(\mathrm{grad}f_{k}(x_{\nu})+\lambda_{\nu}^{(k)})$.
\end{enumerate}

Let $\psi: \mathbb{R}^n\rightarrow \mathbb{R}$ be a $C^{\infty}$ function 
such that $ \psi(t)=1$ in a neighbourhood of $0\in \mathbb{R}^n$ and 
$\psi(t)=0$ for $|t|\displaystyle \geqq\frac{1}{4}.$
We define a map-germ $\eta=(\eta_{1},\ldots,\eta_{p}) : (\R^n,0) \to (\R^p,0)$ 
by:
$$
\eta_{1}(x)=\psi\left(\frac{x-x_\nu}{d(x_\nu,\s)}\right)(f_{1}(x_\nu)+\epsilon_{\nu}\|x-x_\nu\|^{2}),
$$
$$
\eta_{k}(x)=\psi\left(\frac{x-x_\nu}{d(x_\nu,\s)}\right)(f_{k}(x_\nu)-\langle\lambda_{\nu}^{(k)},(x\ -x_\nu)\rangle),\ k=2,\ \ldots,\ p,
$$
for $x\in B_{\nu}$ and $\eta(x)=0$ for 
$x\displaystyle \not\in\bigcup_{\nu=1}^{\infty}B_{\nu}$,
where 
$\dis
B_{\nu}=\{x\in \mathbb{R}^{n} :\, \|x-x_{\nu}\| \leq
\frac{1}{4} d( x_{\nu},\s) \},
$
and $(\epsilon_{\nu})_{\nu \geq 1}$ is a sequence of real numbers, for $\nu\in \mathbb{N}$.
 
Let $K>0$ such that $|\psi(t)|\leq K$ in $\mathbb{R}^n.$ 
Then we have 
\begin{equation}\label{eq-d1}
|\eta_{1}(x)| \leq K(|f_{1}(x_\nu)|+\epsilon_{\nu}\|x-x_\nu\|^{2})
\precsim o(d(x_\nu,\s)^{r})+\epsilon_{\nu}d(x_{\nu},\s)^{2},
\end{equation}
\begin{equation}\label{eq-d2}
|\eta_{k}(x)| \leq K(|f_{k}(x_\nu)|+\|\lambda_{\nu}^{(k)}\|\|x\ -x_\nu\|) 
\precsim  o(d(x_\nu,\s)^{r}), \ \ k = 2, \cdots , p.
\end{equation}
Therefore, if we take the sequence $(\epsilon_{\nu})_{\nu \geq 1}$
so that $\epsilon_{\nu}=o(d(x_\nu,\s)^{r})$, we  have
$$
\eta(x)=o(d(x,\s)^r).
$$
It follows that $g=f-\eta$ is a $C^{r}$-realisation of $j^rf(\s ;0).$

By condition $\bf (b)$, there is a small neighbourhood $V_{\nu}$ of $x_{\nu}$ 
such that the set 
$$
M_{\nu}=\{x\in V_{\nu}:\, f_{k}(x)-\eta_{k}(x)=0,\ k=2,\ \ldots,p\}
$$ 
is a differentiable manifold of codimension $p-1.$

From condition $\bf (c),$ for each $\nu\in \mathbb{N},$  there are real numbers  $a_{2,\nu},\ldots, a_{p,\nu}$ such that,
$$
\mathrm{grad}f_{1}(x_{\nu})=\sum_{k=2}^{p}a_{k,\nu}(\mathrm{grad}f_{k}(x_{\nu})+\lambda_{\nu}^{( k)}).
$$
Choose now $\epsilon_{\nu}=o(d(x_\nu,\s)^{r})$ more finely such that $x_\nu$ 
is a non-degenerate critical point of
$
h_{\nu}(x)=f_{1}(x)-\eta_{1}(x)+\sum_{k=2}^{p}a_{k,\nu}(\eta_{k}(x)-f_{k}(x)).
$\\
Then
\begin{eqnarray*}
g^{-1}(0)\cap V_{\nu} & = & \{x\in V_{\nu}:\, 
g_{1}(x)=g_{2}(x)=\ldots=g_{p}(x)=0\} \\
& = & \{x\in M_{\nu}:h_{\nu}(x)=0\}.
\end{eqnarray*}
By the choice of  $\epsilon_{\nu}$, this set is the intersection of  the locus of a non-degenerate quadratic form $h_{\nu}^{-1}(0)$  with a codimension $p-1$ 
manifold $M_{\nu}.$ 
Therefore, modifying the sequence $\epsilon_{\nu},$ 
$\epsilon_{\nu}=o(d(x_\nu,\s)^{r})$), if necessary,
in the case where $n > p,$ $g^{-1}(0)$ cannot be a topological 
manifold of codimension $p,$ around $x_{\nu}$, $\nu \in \mathbb{N}$.

By construction, the map-germ $g$ has the same $r$-jet as $f$ as mentioned 
above, and its zero set $g^{-1}(0)$ contains the sequence 
$(x_{\nu})_{\nu\in \mathbb{N}}$ which is not in $\s$.
Therefore the germ $g^{-1}(0) \setminus \s$ at $0 \in \R^n$ is not empty. 
Since $j^rf(\s ;0)$ is $\s$-$V$-sufficient, the germ $f^{-1}(0) \setminus \s$ 
at $0 \in \R^n$ is not empty, either. 

It follows from the above arguments that we have the following properties
for $f \in \mathcal{E}_{[r]}(n,p)$ under the assumption that 
$j^rf(\s ;0)$ is $\s$-$V$-sufficient in $\mathcal{E}_{[r]}(n,p)$, but $f$ 
does not satisfy condition $(\widetilde{K}_{\s})$:

\vspace{2mm}

\noindent (P1) The germ $f^{-1}(0) \setminus \s$ at $0 \in \R^n$ is not empty.

\vspace{1mm}

\noindent (P2) In the case where $n > p$, $j^rf(\s ;0)$ has a $C^r$-realisation $g \in \mathcal{E}_{[r]}(n,p)$ such that near each $x_\nu,$ 
$ g^{-1}(g(x_{\nu}))=g^{-1}(0)$ is not a topological manifold 
of codimension $p$.

\vspace{2mm}

On the other hand, we have the following lemma.

\begin{lem}\label{transv} Let $j^rf(\s ;0)$ is $\s$-$V$-sufficient 
in $\mathcal{E}_{[r]}(n,p)$.
Suppose that the germ $f^{-1}(0) \setminus \s$ at $0 \in \R^n$ is not empty.
Then there exists $g_0 \in \mathcal{E}_{[r]}(n,p)$ with  
$j^rg_0(\s ;0) = j^rf(\s ;0)$ such that the germ of 
$g_0^{-1}(0) \setminus \s$ at $0 \in \R^n$ is not empty and 
a smooth submanifold of $\R^n$ of codimension $p.$
\end{lem}
 
\begin{proof}
We need for this, the following fact, which is a consequence of 
Sard's theorem: 
Let $U\subset \mathbb{R}^{n},\ V\subset \mathbb{R}^{p}$ be open sets, 
let $F:U\times V \rightarrow \mathbb{R}^{p}$ be a smooth map.
and let $b$ be a regular values of $F.$ 
Then for almost all $y \in V$, $b$ is a regular value of a map
$F_{y}:U\ni x\mapsto F(x,y)$.

By Remark \ref{realisation}(2), there exists a $C^r$-realisation $\tilde{f}$
of $j^rf(\s ;0)$ such that the restriction of $\tilde{f}$ to
$\R^n \setminus \s$ is smooth.
Since the jet $j^rf(\s ;0)$ is $\s$-$V$-sufficient in $\mathcal{E}_{[r]}(n,p)$,
the germ of $\tilde{f}^{-1}(0) \setminus \s$ at $0 \in \R^n$ is not empty.

Let $h:\mathbb{R}^{n}\rightarrow \mathbb{R}$ be a smooth flat function 
such that $h^{-1}(0)=\s.$
We consider now, the map 
$F: (\mathbb{R}^{n}\times \mathbb{R}^{p},(0,0)) \rightarrow 
(\mathbb{R}^{p},0)$ defined by
$$
\ F_{j}(x,y)= \tilde{f}_{j}(x)+y_{j}h(x), \ \ j=1, \cdots , p.
$$  
The restriction of $\ F$ to 
$(\mathbb{R}^{n}\backslash \s)\times \mathbb{R}^{p}$ is a submersion
around $(0,0) \in \R^n \times \R^p.$
Since the germ of $\tilde{f}^{-1}(0) \setminus \s$ at $0 \in \R^n$ 
is not empty,
$(F|_{(\mathbb{R}^{n}\backslash \s)\times \mathbb{R}^p})^{-1}(0) \ne 
\emptyset$ as germs at $(0,0) \in \R^n \times \R^p.$
In addition, $0 \in \R^p$ is a regular value of 
$F|_{(\mathbb{R}^{n}\backslash \s)\times \mathbb{R}^{p}}$. 
Therefore, by the above fact, there is  $y_{0} \in \mathbb{R}^{p}$ close to
$0 \in \R^p$ such that $0 \in \mathbb{R}^p$ is a regular value of 
$F_{y_{0}}|_{\mathbb{R}^{n}\backslash \s}$.

Now we let $g_0 = F_{y_{0}}$.
By construction, $g_0 \in \mathcal{E}_{[r]}(n,p)$, and 
$j^r g_0 (\s ;0) = j^rf(\s ;0).$
Since $j^rf(\s ;0)$ is $\s$-$V$-sufficient in $\mathcal{E}_{[r]}(n,p)$,
$g_{0}^{-1}(0) \setminus \s$ is not empty and a germ of a smooth submanifold 
of $\R^n$ of codimension $p.$ 
\end{proof}

We first consider the case where $n > p$.
Since $j^rf(\s ;0)$ is $\s$-$V$-sufficient in $\mathcal{E}_{[r]}(n,p)$, 
property $(P2)$ contradicts Lemma \ref{transv}.
Therefore $f$ satisfies condition $(\widetilde{K}_{\s})$.
This completes the proof of Theorem \ref{RelativeKuoThm1}.

We next consider the case where $n = p$.
In this case we are assuming that $j^rf(\s ;0)$ has a subanalytic 
$C^r$-realisation $\overline{f}$, and that $\s$ is a subanalytic subset 
of $\R^n$.
By property $(P1)$, the germ of $f^{-1}(0) \setminus \s$ at $0 \in \R^n$ 
is not empty.
Since $j^rf(\s ;0)$ is $\s$-$V$-sufficient in $\mathcal{E}_{[r]}(n,p)$, 
the germ of $\overline{f}^{-1}(0) \setminus \s$ at $0 \in \R^n$ 
is not empty.
Then, by the Curve Selection Lemma, there exists a $C^{\omega}$ arc 
$\lambda : [0,\delta) \to \R^n$, $\delta > 0$, such that 
$\lambda (0) = 0 \in \R^n$ and $\lambda (t) \in \overline{f}^{-1}(0)$, 
$t \in [0,\delta)$.
Therefore, because of $\s$-$V$-sufficiency of $j^rf(\s ;0)$, 
this contradicts Lemma \ref{transv}.
Therefore $f$ satisfies condition $(\widetilde{K}_{\s})$.
This completes the proof of Theorem \ref{RelativeKuoThm2}.
\end{proof}
\begin{rmk}\label{rmkBochnakLojasiewicz}
In the non-relative case $C^0$-sufficiency of $r$-jets in 
${\mathcal E}_{[r]}(n,1)$ is equivalent to $V$-sufficiency of 
$r$-jets in ${\mathcal E}_{[r]}(n,1)$.
The Bochnak-Lojasiewicz inequality takes a very important role
in the proof of the equivalence.
Therefore it may be natural to ask whether the Bochnak-Lojasiewicz
inequality holds also in the relative case.
More precisely, if we let $f : (\R^n,0) \to (\R,0)$ a $C^{\omega}$ 
function germ, then we ask whether the following inequality
$$
d(x,\s ) \| \mathrm{grad} f(x)\| \succsim |f(x)|
$$
holds in a neighbourhood of $0 \in \R^n$.

If this Bochnak-Lojasiewicz inequality holds in the relative case, 
then it follows that the relative Kuiper-Kuo condition ($K$-$K_{\s}$) 
and condition ($\widetilde{K}_{\s}$) are equivalent
like in the non-relative case.
But we give an example below to show that conditions ($K$-$K_{\s}$) and  
($\widetilde{K}_{\s}$) are not necessarily equivalent in the relative case.
As a result, we can see that the Bochnak-Lojasiewicz inequality does not 
always hold in the relative case, and it follows from Theorems 
\ref{RelativeKuiperKuo0}, \ref{RelativeKuoThm1} that $\s$-$V$-sufficiency 
of $r$-jets in ${\mathcal E}_{[r]}(n,1)$ does not always imply 
$\s$-$C^0$-sufficiency of $r$-jets in ${\mathcal E}_{[r]}(n,1)$.
\end{rmk}

\begin{example}\label{nonequivalentexample1}
Let us recall the situation in Example \ref{examRelKK2}(2).
Namely, $f_m (x,y) = x^3 - 3xy^m$, $m \ge 3$, and $\s = \{ x = 0 \}$.
Let $r = 3$.
In this setting, the relative Kuiper-Kuo condition is
$$
\| \mathrm{grad} f_m (x,y) \| \succsim |x|^{3-1}
$$
in a neighbourhood of $(0,0) \in \R^2$.
But as seen in Example \ref{examRelKK2}(2), the above inequality
does not hold along an analytic arc $\lambda (t) = (t^m,t^2)$
for $m \ge 3.$
In other words, the relative Kuiper-Kuo condition ($K$-$K_{\s}$)
is not satisfied.
Therefore, by Theorem \ref{RelativeKuiperKuo0}, $j^3 f_m (\s ; 0)$
is not $\s$-$C^0$-sufficient in ${\mathcal E}_{[3]}(2,1)$.

On the other hand, condition ($\widetilde{K}_{\s}$) is
$$
|x| \| \mathrm{grad} f_m (x,y)\| + |f_m (x,y)| \succsim |x|^3 
$$
in a neighbourhood of $(0,0) \in \R^2$.
We show that $f_m$, $m \ge 3$, satisfies this condition.
Let
$$
\lambda (t) = (a_k t^k + \cdots , b_s t^s + \cdots )
$$
be an analytic arc passing through $(0,0) \in \R^2$ as in 
Example \ref{examRelKK2}.
Then we may assume $a_k \ne 0$, and $|x| \thickapprox |t|^k.$

In the case where $2k < ms$, we have
$$
|x| \| \mathrm{grad} f_m (x,y)\| \ge 3|x||x^2 - y^m| \ge |x||x|^2
\thickapprox |t|^k|t|^{2k} = |t|^{3k}
$$
on $\lambda$ near $(0,0) \in \R^2$.

In the case where $2k > ms$, we have
$$
|x| \| \mathrm{grad} f_m (x,y)\| \ge 3|x||x^2 - y^m| \ge |x||y|^m
\succsim |t|^k|t|^{2k} = |t|^{3k}
$$
on $\lambda$ near $(0,0) \in \R^2$.

In the case where $2k = ms$ and $a_k \ne b_s$, we have
$$
|x| \| \mathrm{grad} f_m (x,y)\| \ge 3|x||x^2 - y^m| \succsim |x| |x|^2
\thickapprox |t|^k|t|^{2k} = |t|^{3k}
$$
on $\lambda$ near $(0,0) \in \R^2$.

In the case where $2k = ms$ and $a_k = b_s$, we have
$$
|f_m (x,y)| = |x^3 - 3xy^m| = |x||x^2 - 3 y^m| \succsim |x| |x|^2
\thickapprox |t|^k|t|^{2k} = |t|^{3k}
$$
on $\lambda$ near $(0,0) \in \R^2$.

On any analytic arc $\lambda$, condition ($\widetilde{K}_{\s}$) is
satisfied.
Therefore we can see that $f_m$, $m \ge 3$, satisfies condition 
($\widetilde{K}_{\s}$).
It follows that conditions ($K$-$K_{\s}$) and ($\widetilde{K}_{\s}$) 
are not necessarily equivalent in the relative case.
In addition, by Theorem \ref{RelativeKuoThm1}, we see that 
$j^3f_m(\s ;0)$ is $\s$-$V$-sufficient in $\mathcal{E}_{[3]}(2,1)$ 
for any $m \ge 3.$

Incidentally, the Bochnak-Lojasiewicz inequality does not hold 
along an analytic arc $\lambda (t) = (t^m,t^2)$ for $m \ge 3$.
\end{example}

As a corollary of the proofs of Theorems \ref{RelativeKuoThm1} and 
\ref{RelativeKuoThm2}, we have the following.

\begin{cor}\label{manifold}
Let $r$ be a positive integer, and let $f \in {\mathcal E}_{[r]}(n,p)$ such that $j^rf(\s,0)$ is $\s$-$V$-sufficient in ${\mathcal E}_{[r]}(n,p).$

\begin{enumerate}[1)]
\item if $n>p,$  then  for any $C^{r}$ realisation $g$ of $j^rf(\s,0)$,  $g^{-1}(0)\setminus \s$ is a  germ of  $C^r$ submanifold of codimension $p$ at $0$ or empty.

\item if $n=p,$ $j^rf(\s ;0)$ has a subanalytic $C^r$-realisation
and  $\s$ is a germ at $0 \in \R^n$ of a closed subanalytic subset of $\R^n$, 
then for any $C^{r}$ realisation $g$ of $j^rf(\s,0)$, the set-germs 
$(g^{-1}(0),0)$ and $(f^{-1}(0),0)$ are equal and are contained in $(\s,0).$

\end{enumerate}
\end{cor}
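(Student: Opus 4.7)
The plan is to leverage the equivalence between $\s$-$V$-sufficiency and the relative Kuo condition $(K_{\s})$ established in the preceding theorems, together with Remark~\ref{rmk2ndRelativeKuo}, to pin down the local structure of $g^{-1}(0)\setminus\s$. First, $\s$-$V$-sufficiency of $j^rf(\s;0)$ gives, by Theorem~\ref{RelativeKuoThm1} when $n>p$ and by Theorem~\ref{RelativeKuoThm2} when $n=p$ (the latter using the subanalyticity hypothesis), that $f$ satisfies $(K_{\s})$. For an arbitrary realisation $g$, I would write $g=f+h$ with $j^rh(\s;0)=0$; Lemma~\ref{lemrflat} applied to $h$ and componentwise to $dh$ yields $\|h(x)\|=o(d(x,\s)^{r})$ and $\|dh(x)\|=o(d(x,\s)^{r-1})$, so the horn-neighbourhoods of $f$ and $g$ are comparable and, by the Rabier-function estimate $\nu(df+dh)\geq \nu(df)-\|dh\|$ combined with $\nu\thickapprox\kappa$, one obtains $\kappa(dg(x))\succsim d(x,\s)^{r-1}$ on a horn-neighbourhood of $g$. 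Hence $g$ also satisfies $(K_{\s})$, and Remark~\ref{rmk2ndRelativeKuo}(1) yields that $\grad g_1(x),\ldots,\grad g_p(x)$ are linearly independent at every point of $g^{-1}(0)\setminus\s$; the implicit function theorem then identifies the germ of $g^{-1}(0)\setminus\s$ as a $C^r$-submanifold of $\R^n$ of codimension $p$ (or empty), which is exactly the content of part~1) when $n>p$.

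For part~2), with $n=p$, codimension~$p$ forces $g^{-1}(0)\setminus\s$ to be $0$-dimensional, i.e.\ discrete. To upgrade this to emptiness of the germ at $0$, I would apply the preceding analysis to the hypothesised subanalytic realisation $\bar f$: since $\bar f$ is subanalytic, $\bar f^{-1}(0)$ is a subanalytic subset of $\R^n$, so $\bar f^{-1}(0)\setminus\s$ is $0$-dimensional and subanalytic, hence locally finite near $0$; since $0\in\s$ is never a point of $\bar f^{-1}(0)\setminus\s$, this set has empty germ at $0$, so $\bar f^{-1}(0)\subset\s$ as germs. Setting $Z_{\s}:=f^{-1}(0)\cap\s$---which coincides with $g^{-1}(0)\cap\s$ and with $\bar f^{-1}(0)\cap\s$ because any two realisations of $j^rf(\s;0)$ agree on $\s$ near $0$---we get $\bar f^{-1}(0)=Z_{\s}$ as germs at $0$.

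The step I expect to be the most delicate is upgrading the topological $\s$-$V$-equivalence to the set-theoretic equalities $g^{-1}(0)=f^{-1}(0)=Z_{\s}$. By $\s$-$V$-sufficiency there is a germ homeomorphism $\phi\colon g^{-1}(0)\to \bar f^{-1}(0)=Z_{\s}$ with $\phi(x)=x$ for every $x\in g^{-1}(0)\cap\s=Z_{\s}$. Since $Z_{\s}$ is contained in the domain $g^{-1}(0)$, since $\phi$ is a bijection onto $Z_{\s}$ and since $\phi$ restricts to the identity on $Z_{\s}$, any $x\in g^{-1}(0)$ satisfies $\phi(x)=b$ for some $b\in Z_{\s}$; combined with $\phi(b)=b$ and the injectivity of $\phi$, this forces $x=b\in Z_{\s}$. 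Thus $g^{-1}(0)=Z_{\s}$ as germs, and the same argument applied to $f$ versus $\bar f$ gives $f^{-1}(0)=Z_{\s}$, so that $g^{-1}(0)=f^{-1}(0)=Z_{\s}\subset\s$ as germs at $0$, completing part~2).
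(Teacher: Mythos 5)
Your proposal is correct and follows essentially the route the paper intends: the corollary is stated as a consequence of the proofs of Theorems \ref{RelativeKuoThm1} and \ref{RelativeKuoThm2}, i.e.\ one extracts the relative Kuo condition from $\s$-$V$-sufficiency, transfers it to an arbitrary realisation $g$ via Lemma \ref{lemrflat} and the Rabier/Kuo-distance estimates already used there, and then applies the implicit function theorem off $\s$ for part 1) and, for part 2), subanalyticity of $\bar f^{-1}(0)\setminus\s$ together with the $\s$-fixing homeomorphism from $\s$-$V$-equivalence. Your only deviations are cosmetic (local finiteness of a $0$-dimensional subanalytic set in place of the curve selection lemma, and the minor germ-level care needed when invoking injectivity of the homeomorphism), so the argument matches the paper's.
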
 
\begin{rmk}\label{importantrmk}
It is well-known that the Kuiper-Kuo condition and $V$-sufficiency of jets
are equivalent for function-germs.
But, by Example \ref{nonequivalentexample1} and Theorem \ref{RelativeKuoThm1}, 
we can see that they are not always equivalent in the relative case.
\end{rmk}

\begin{rmk}
It is worth to mention that if   $f\in \mathcal{E}_{r}(n,p)$ and a subanalytic mapping then it has a subanalytic realisation in $ \mathcal{E}_{q}(n,p)$ for any $q\ge r$ (see \cite{kurdykapawlucki}).
\end{rmk} 


\subsection{Relative V-sufficiency of $r$-jets in  $C^{r+1}$ mappings }
In this subsection we give some characterisations for the relative
$r$-jets to be $\s$-$V$-sufficient in $C^{r+1}$ mappings.

\begin{thm}\label{RelativeKuoThm3}
Let $r$ be a positive integer, and let
$f \in {\mathcal E}_{[r+1]}(n,p)$, $n \ge p$.
If $f$ satisfies condition $(K^\d_{\s})$,
then the relative $r$-jet, $j^r f(\s;0)$ is $\s$-$V$-sufficient 
in ${\mathcal E}_{[r+1]}(n,p)$.
\end{thm}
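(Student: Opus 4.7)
The plan is to adapt the proof of Theorem \ref{RelativeKuoThm} (the $C^r$ case with $(\widetilde{K}_{\s})$), replacing the exponent $r-1$ by $r-\delta$ and the horn of degree $r$ by the horn of degree $r+1$ attached to the test realisation. Given an arbitrary realisation $g\in{\mathcal E}_{[r+1]}(n,p)$ of $j^rf(\s;0)$, I would set $h:=g-f$ and $F(x,t):=f(x)+th(x)$ for $t\in I=[0,1]$. Since $h$ is $C^{r+1}$ with $j^rh(\s;0)=0$, Lemma \ref{lemrflat} gives $\|h(x)\|\precsim d(x,\s)^{r+1}$, and the same lemma applied to $dh$ (which is $C^r$ with $j^{r-1}(dh)(\s;0)=0$) yields $\|dh(x)\|\precsim d(x,\s)^{r}$.

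Next I would exploit $(K^{\delta}_{\s})$ applied to $f$ in the direction of the fixed realisation $g$: there exist $\delta,C,\alpha,\bar w>0$ such that $\kappa(df(x))\ge C\,d(x,\s)^{r-\delta}$ on ${\mathcal H}^{\s}_{r+1}(g;\bar w)\cap\{\|x\|<\alpha\}$. Because $\|dh(x)\|=o(d(x,\s)^{r-\delta})$, the estimate
\[
\nu(d_xF(x,t))\ge \nu(df(x))-t\|dh(x)\|\ge \tfrac{C}{2}\,d(x,\s)^{r-\delta}
\]
holds uniformly in $t\in I$ on a possibly smaller horn neighbourhood, which by the equivalence $\nu\thickapprox\kappa$ gives $\kappa(d_xF(x,t))\succsim d(x,\s)^{r-\delta}$. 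A key observation for the $V$-version is that the whole family of zero-sets lies in this horn: on $F_t^{-1}(0)$ one has $g(x)=(1-t)h(x)$, hence $\|g(x)\|\precsim d(x,\s)^{r+1}$, so $F_t^{-1}(0)\subset {\mathcal H}^{\s}_{r+1}(g;\bar w)$ for every $t\in I$.

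Following the proof of Theorem \ref{RelativeKuoThm}, I would then construct, on the set $W:=({\mathcal H}^{\s}_{r+1}(g;\bar w)\cap\{\|x\|<\alpha\})\times T$, an orthogonalised basis $N_1(x,t),\ldots,N_p(x,t)$ of $\mathrm{span}\{\grad_xF_1,\ldots,\grad_xF_p\}$ with $\|N_j(x,t)\|\ge \kappa(d_xF(x,t))\succsim d(x,\s)^{r-\delta}$, and the modified Kuo vector field
\[
X(x,t)=\frac{\partial}{\partial t}+\sum_{j=1}^{p} h_j(x)\,\frac{N_j(x,t)}{\|N_j(x,t)\|^2}
\qquad\text{on } W\setminus(\s\times T),
\]
extended by $\partial/\partial t$ on $W\cap(\s\times T)$. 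The improved orders give
\[
\Bigl\|\sum_{j=1}^{p} h_j(x)\,\frac{N_j(x,t)}{\|N_j(x,t)\|^2}\Bigr\|\precsim \frac{\|h(x)\|}{d(x,\s)^{r-\delta}}\precsim d(x,\s)^{1+\delta},
\]
so the relative Lipschitz estimate of Proposition \ref{integral} is satisfied with room to spare, and $X$ is locally integrable. The resulting flow fixes $\s$ pointwise, preserves $F(\cdot,t)$, and in particular maps $F_{t_0}^{-1}(0)$ onto $F_t^{-1}(0)$ through $\s$-homeomorphisms; a standard compactness argument on $[0,1]$ patches these into a $\s$-homeomorphism between $f^{-1}(0)$ and $g^{-1}(0)$, which is the required $\s$-$V$-equivalence.

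The main technical point — and the only place where something could conceivably go wrong — is ensuring that the horn-neighbourhood containing $\bigcup_tF_t^{-1}(0)$ is the same one on which $\kappa(df)$ can be lower-bounded via $(K^{\delta}_{\s})$; since that condition is stated for an arbitrary realisation of the jet, applying it with the fixed $g$ gives horn-parameters $(\bar w,\alpha,\delta)$ that work uniformly in $t$ once $\bar w$ is shrunk to absorb the factor $(1-t)\le 1$. Everything else is routine, being a bookkeeping adaptation of the $C^r$-case proof.
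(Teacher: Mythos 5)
Your overall strategy coincides with the paper's: estimate $\|h\|\precsim d(x,\s)^{r+1}$ and $\|dh\|\precsim d(x,\s)^{r}$ by Lemma \ref{lemrflat}, perturb $\kappa(df)$ to get $\kappa(d_xF(x,t))\succsim d(x,\s)^{r-\delta}$ on a horn neighbourhood of degree $r+1$, build the Kuo vector field with $\|X-\partial/\partial t\|\precsim d(x,\s)^{1+\delta}$, integrate via Proposition \ref{integral}, and conclude by compactness in $t$. However, one step fails as written: the claim that $F_t^{-1}(0)\subset\mathcal{H}^{\s}_{r+1}(g;\bar w)$ for \emph{every} $t\in[0,1]$, where $\bar w$ is the width supplied by $(K_{\s}^{\delta})$ for the direction $g$. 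On $F_t^{-1}(0)$ you only obtain $\|g(x)\|=(1-t)\|h(x)\|\le K\,d(x,\s)^{r+1}$, with $K$ the constant coming from Lemma \ref{lemrflat}, and nothing in the definition of $(K_{\s}^{\delta})$ guarantees $K\le\bar w$: the width is whatever the condition hands you for that direction, and it may be smaller than $K$. Your proposed remedy, shrinking $\bar w$ "to absorb the factor $(1-t)\le 1$", goes the wrong way: shrinking $\bar w$ shrinks the horn on which the lower bound $\kappa(df(x))\ge C\,d(x,\s)^{r-\delta}$ is available, so already at $t=0$ (where $\|g\|=\|h\|$ on $f^{-1}(0)$) parts of the zero set may lie outside the region where the vectors $N_j$ are bounded below, and there the vector field is not even defined.

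The repair is the paper's localisation in $t$, which you essentially perform anyway for the flow: fix $t_0\in[0,1]$ and observe that $F(\cdot,t_0)=f+t_0h$ is itself a $C^{r+1}$ realisation of $j^rf(\s;0)$; apply $(K_{\s}^{\delta})$ in that direction to get $C,\alpha,\delta,\bar w$ (depending on $t_0$) valid on $\mathcal{H}^{\s}_{r+1}(F(\cdot,t_0);\bar w)\cap\{\|x\|<\alpha\}$, and then choose the interval $T\ni t_0$ so small that $\|F(x,t)-F(x,t_0)\|=|t-t_0|\,\|h(x)\|\le\bar w\,d(x,\s)^{r+1}$ for $t\in T$. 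This forces $F_t^{-1}(0)\subset\mathcal{H}^{\s}_{r+1}(F(\cdot,t_0);\bar w)$ for all $t\in T$ no matter how small $\bar w$ is, because $T$ shrinks with $\bar w$. With this change the rest of your argument (including your explicit derivative estimate $\|dh\|\precsim d(x,\s)^{r}$, which the paper leaves implicit) goes through as in Theorem \ref{RelativeKuoThm}, and compactness of $[0,1]$, now with $\delta$ and $\bar w$ allowed to vary with $t_0$, completes the proof.
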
  

\begin{proof}
Because of the same reason as the theorem above, we may assume
that $r \ge 2$.

Let $g \in {\mathcal E}_{[r+1]}(n,p)$ be an arbitrary mapping such that 
$j^r g(\s ;0) = j^r f(\s ;0)$.
We define a $C^{r+1}$ mapping $h:(\mathbb{R}^n,0)\to (\mathbb{R}^p,0)$ by 
$h(x) := g(x) - f(x)$.
Then $j^rh(x)=0$ for any $x\in \s.$ 

Let $F(x,t) := f(x)+th(x)$ for $t\in I=[0,1].$
Since  $j^rh=0$ on $\s$ near $0 \in \R^n$, by Lemma \ref{lemrflat},
$\| h(x)\| \precsim d(x,\s)^{r+1}.$
Then there exists a small neighbourhood $T$ of $t_{0}$ in $I$ such that 
$$
\|F(x,t)-F(x,t_{0})\|\leq \bar w\ \ds^{r+1}
$$ for any $t \in T$.
Thus the zero-set $F(x,t)=0$ is contained in 
$$
\left({\mathcal H}^{\s}_{r+1}(F(x,t_{0}); \bar w)\cap 
\{\| x\| < \alpha\}\right)\times T,
$$ 
hence we will concentrate our attention to this set.

Moreover there are $\bar{w}$, $\alpha > 0$ such that
$$
\nu (d_{x}F( x,t_{0})) = \nu (df(x)+t_{0} dh(x)) \geq 
\nu (df(x))-t_{0}\Vert dh(x)\Vert \geq \frac{C}{2}d(x,\s)^{r-\delta}
$$ 
in $ {\mathcal H}^{\s}_{r+1}(f; \bar w)\cap \{\| x\| < \alpha\}.$ 
Then there exists $C'>0$ such that
\begin{equation}\label{eq2-1}
 \kappa(d_{x}F(x,t))\geq C'\ds^{r-\d}
\end{equation}
for $(x,t)\in \left({\mathcal H}^{\s}_{r+1}(F(x,t_{0}); \bar w)\cap 
\{\| x\| < \alpha\}\right)\times T.$ 
Set  
$$
W := \left({\mathcal H}^{\s}_{r+1}(F(x,t_{0}); \bar w)\cap 
\{\| x\| < \alpha\}\right)\times T.
$$ 

Now we consider as in the proof of Theorem \ref{RelativeKuoThm} the basis  $\{N_1(x,t), \ldots N_p(x,t)\}$ of $V_{x,t}$ constructed as follows:
$$
N_j(x,t)=\grad_{x}F_j(x,t)-\tilde N_j(x,t), \ (1\leq j\leq p),
$$
where $\tilde N_j(x,t)$ is the projection 
of $\grad_{x}f_j(x,t)$ to
the subspace $ V^{j}_{x,t}$ spanned by the $\grad_{x}F_k(x,t),$ $k\ne j.$

From the above we get, for any $j\in\{1,\dots,p\}$ and $(x,t)\in W,$
\begin{equation*}
\| N_j(x,t)\|\geq \kappa(d_{x}F(x,t)) \geq C'\ds^{r-\d}.
\end{equation*}
and then we use the same vector field of trivialisation as above

\begin{equation*}\label{eq3-1}
X(x,t)= \begin{cases}\dis
\frac{\partial }{\partial t}+\sum_{j=1}^p h_{j}(x)\frac{N_j(x,t)}{\| N_j(x,t)\|^2} & \text{ if } (x,t) \in W \setminus \s \times T\\
\dis\quad \frac{\partial }{\partial t} & \text{ if }  (x,t) \in 
W \cap \s \times T.
\end{cases}
\end{equation*}

Since
\begin{equation*}\label{eq3-2}
\left\| \sum_{j=1}^p h_{j}(x)\frac{N_j(x,t)}{\| N_j(x,t)\|^2}\right\|  \precsim \frac{ \|h(x)\|}{\| N_j(x,t)\|}\precsim \ds^{1+\d},\end{equation*}
we use  Proposition \ref{integral} to end the proof as in Theorem \ref{RelativeKuoThm}.
\end{proof}

\begin{example}\label{nonequivalentexample2}
Let $f: (\R^n,0) \to (\R,0)$, $n \ge 3$, be a polynomial function defined by
$
f(x_{1},x_{2},\dots,x_{n}) := x_{1}^3 - 3x_{1}x_{2}^{5}
$
and $\s := \{ x_{1}=x_{2}=0 \}.$
Then we have 
$
\grad f_m (x,y) = (3(x_{1}^2 -  x_{2}^{5}), - 15x_{1}x_{2}^{4}),
$ 
and $d(x,\s ) = \| (x_{1},x_{2})\|.$
From the computation in Example \ref{examRelKK2}(1), 
$
\| \mathrm{grad} f (x)\| \succsim d(x,\s )^{7-\frac{1}2}
$
in a neighbourhood of $0 \in \R^n$.
Therefore, by Theorem \ref{RelativeKuiperKuoThm}(2), $j^{7}f(\s;0)$ is 
$\s$-$C^0$-sufficient in $\mathcal{E}_{[8]}(n,1).$
Now, since   $g(x)=x_{1}^3 - 3x_{1}x_{2}^{5}+x_{2}^{\frac{15}{2}}=(x_{1}-x_{2}^{ \frac{5}{2}})^2(x_{1}+2x_{2}^{ \frac{5}{2}})$ is a realisation of the jet $j^7f(\s;0)$ in $\mathcal{E}_{[7]}(n,1)$, which is not $\s$-$V$-equivalent
to $f;$ therefore $j^{7}f(\s;0)$ is not $\s$-$V$-sufficient in 
$\mathcal{E}_{[7]}(n,1).$ The proof can be carried out like in \cite{koku}.
\end{example}

By Lemma \ref{lemrflat}, we have the following as a corollary
of Theorem \ref{RelativeKuoThm3}.

\begin{cor}\label{RelativeKuoCor2}
Let $r$ be a positive integer, and let $f \in {\mathcal E}_{[r+1]}(n,p)$,
$n \ge p$.
If there exists $\delta>0$ such that
\begin{equation*}
d(x,\s)\kappa(df(x))+\|f(x)\|\succsim d(x,\s)^{r+1-\d}
\end{equation*} 
holds in some neighbourhood of $0 \in \R^n,$
then $j^rf(\s;0)$ is $\s$-$V$-sufficient in ${\mathcal E}_{[r+1]}(n,p).$
\end{cor}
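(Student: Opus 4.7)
The plan is to deduce this corollary from Theorem \ref{RelativeKuoThm3} by showing that the displayed hypothesis implies that $f$ satisfies the second relative Kuo condition $(K^\d_{\s})$. Recall that $(K^\d_{\s})$ requires, for every $g\in {\mathcal E}_{[r+1]}(n,p)$ with $j^{r}g(\s;0)=j^{r}f(\s;0)$, the existence of constants $C,\alpha,\delta,\bar w>0$ (depending on $g$) such that $\kappa(df(x))\geq C\,d(x,\s)^{r-\delta}$ throughout $\mathcal{H}^{\s}_{r+1}(g;\bar w)\cap\{\|x\|<\alpha\}$. So I would fix an arbitrary such $g$ and produce these constants.

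First I would set $h:=g-f$. Since $j^{r}h(\s;0)=0$ and $h\in {\mathcal E}_{[r+1]}(n,p)$, Lemma \ref{lemrflat} gives a constant $C_{1}>0$ with $\|h(x)\|\le C_{1}\,d(x,\s)^{r+1}$ in a neighbourhood of the origin. Combined with the triangle inequality $\|f(x)\|\le \|g(x)\|+\|h(x)\|$, on the horn-neighbourhood $\mathcal{H}^{\s}_{r+1}(g;\bar w)$ we obtain
\begin{equation*}
\|f(x)\|\le \bar w\,d(x,\s)^{r+1}+C_{1}\,d(x,\s)^{r+1}=(\bar w+C_{1})\,d(x,\s)^{r+1}.
\end{equation*}

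Next I would plug this bound into the hypothesis. By assumption there exist $c,\alpha>0$ with $d(x,\s)\kappa(df(x))+\|f(x)\|\ge c\,d(x,\s)^{r+1-\delta}$ for $\|x\|<\alpha$. Since $r+1>r+1-\delta$, for $x$ sufficiently close to the origin we have $(\bar w+C_{1})\,d(x,\s)^{r+1}\le \tfrac{c}{2}\,d(x,\s)^{r+1-\delta}$. Hence, on $\mathcal{H}^{\s}_{r+1}(g;\bar w)\cap\{\|x\|<\alpha'\}$ for some $\alpha'\le\alpha$,
\begin{equation*}
d(x,\s)\kappa(df(x))\ge c\,d(x,\s)^{r+1-\delta}-\|f(x)\|\ge \tfrac{c}{2}\,d(x,\s)^{r+1-\delta},
\end{equation*}
which divides out to $\kappa(df(x))\ge \tfrac{c}{2}\,d(x,\s)^{r-\delta}$. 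Thus $f$ satisfies $(K^\d_{\s})$, and Theorem \ref{RelativeKuoThm3} yields that $j^{r}f(\s;0)$ is $\s$-$V$-sufficient in ${\mathcal E}_{[r+1]}(n,p)$.

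There is no real obstacle here: the argument is purely a matching of inequalities, and the only subtle point is that the constants $\bar w,\alpha',\delta$ produced depend on $g$ (through $C_{1}$), which is exactly what $(K^\d_{\s})$ permits. In particular, no subanalyticity or curve selection is needed at this stage, since Lemma \ref{lemrflat} handles the flatness of $h$ on $\s$ in full generality.
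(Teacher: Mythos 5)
Your proposal is correct and is essentially the paper's own route: the paper derives this corollary from Theorem \ref{RelativeKuoThm3} via Lemma \ref{lemrflat}, exactly the combination you use to verify the second relative Kuo condition $(K^\d_{\s})$ for an arbitrary realisation $g$. Your only (harmless) variation is checking $(K^\d_{\s})$ directly on the horn-neighbourhood rather than first passing through condition $(\widetilde{K}^\d_{\s})$, which amounts to the same inequality matching.
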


\begin{rmk}\label{nonequivremark}
In the non-relative case $C^0$-sufficiency of $r$-jets in 
${\mathcal E}_{[r+1]}(n,1)$ is equivalent to $V$-sufficiency of 
$r$-jets in ${\mathcal E}_{[r+1]}(n,1)$, too.
But this does not holds in the relative case, namely
we give an example below to show that $\s$-$V$-sufficiency of $r$-jets 
in $C^{r+1}$ functions does not always imply $\s$-$C^0$-sufficiency of 
$r$-jets in $C^{r+1}$ functions, either.
\end{rmk}

\begin{example}\label{nonequivalentexample3}
Let $f : (\R^2,0) \to (\R,0)$ be a polynomial function defined by
$
f(x,y) := (x - y^3)^2 + y^{10},
$
and let  $\s = \{ x = 0 \}$.
Then we have 
$$
\mathrm{grad} f(x,y) = (2(x -  y^3), - 6y^2(x - y^3) + 10y^9).
$$
Let
$
\lambda (t) = (a_k t^k + \cdots , b_s t^s + \cdots )
$
be an analytic arc passing through $(0,0) \in \R^2$ as in 
Example \ref{examRelKK2}.
Then we may assume $a_k \ne 0$, and then $|x| \thickapprox |t|^k.$

In the case where $k < 3s$, we have
$$
\| \mathrm{grad} f(x,y)\| \ge 2|x - y^3| \ge |x|
$$
on $\lambda$ near $(0,0) \in \R^2$.

In the case where $k > 3s$, we have
$$
\| \mathrm{grad} f(x,y)\| \ge 2|x - y^3| \ge |y|^3 \ge |x|
$$
on $\lambda$ near $(0,0) \in \R^2$.

In the case where $k = 3s$, $|x| \thickapprox |y|^3.$
Therefore we have
$$
|f(x)| = (x - y^3)^2 + y^{10} \ge y^{10} \thickapprox |x|^{4-\frac{2}{3}}
$$
on $\lambda$ near $(0,0) \in \R^2$.

On any analytic arc $\lambda$, 
$\dis
|x| \| \mathrm{grad} f(x,y)\| + |f(x)| \succsim |x|^{4-\frac{2}{3}}
$
holds near $(0,0) \in \R^2$.
Therefore the above inequality holds in a neighbourhood of $(0,0) \in \R^2.$
It follows from Corollary \ref{RelativeKuoCor2} that
$j^3 f(\s ;0)$ is $\s$-$V$-sufficient in ${\mathcal E}_{[4]}(2,1).$

Let $\lambda (t) := (t^3,t).$
Then $|x| = |t|^3 = |y|$ on $\lambda.$
Therefore we have 
$$
\| \mathrm{grad} f(x,y)\| = \| (0,10t^9 )\| = 10 |t|^9 
= 10 |x|^3
$$
on $\lambda$ near $(0,0) \in \R^2 .$
By Theorem \ref{RelativeKuiperKuoThm}(2), $j^3 f(\s ;0)$ cannot 
be $\s$-$C^0$-sufficient in ${\mathcal E}_{[4]}(2,1).$
\end{example}

We gave a sufficient condition for the relative $r$-jets 
to be $\s$-$V$-sufficient in $C^{r+1}$ mappings.
We next give a necessary condition.

\begin{defn}
Let $f \in {\mathcal E}_{[r]}(n,p)$ and $d\in \mathbb{N}.$
We say that the horn neighbourhood of f, $\mathcal{H}^{\s}_{d}(f)$ 
is $\s$-{\em regular} if for some $w>0,$
$$
\kappa(df(x))\succsim d(x,\s)^{d-1} \text{ for $ x\in \mathcal{H}^{\s}_{d}(f;w),\ x$ near $0$.  }
$$

\end{defn} 

\begin{rmk}For germ $f\in {\mathcal E}_{[r]}(n,p)$, $n\geq p$, the following conditions 
are equivalent:
\begin{enumerate}[1)]
\item $ \mathcal{H}^{\s}_{r}(f)$ is $\s$-regular 
\item $f$ satisfies  condition ($\widetilde{K}_{\s}$).
\end{enumerate}
\end{rmk} 

\begin{prop}\label{RelativeKuoThm4}
Let $r$ be a positive integer, and let
$f \in {\mathcal E}_{[r+1]}(n,p)$, $n \ge p$, such that the relative $r$-jet $j^r f(\s;0)$ is $\s$-$V$-sufficient in ${\mathcal E}_{[r+1]}(n,p).$ 
Then for any realisation  $g$ of  $j^rf(\s ;0)$ in ${\mathcal E}_{[r+1]}(n,p)$, the horn neighbourhood $ \mathcal{H}^{\s}_{r+1}(g)$ is $\s$-regular.

\end{prop}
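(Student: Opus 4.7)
The plan is to argue by contradiction, repeating the perturbation construction from the proof of Theorem~\ref{RelativeKuoThm1}, but with every order of vanishing shifted up by one so as to land in the $C^{r+1}$ setting. Suppose some realisation $g\in\mathcal{E}_{[r+1]}(n,p)$ of $j^rf(\s;0)$ has a non-$\s$-regular horn neighbourhood $\mathcal{H}^{\s}_{r+1}(g)$. Negating the defining inequality for each $w>0$ and extracting diagonally yields a sequence $(x_\nu)$ in $\R^n\setminus\s$ with $x_\nu\to 0$,
\[
\|g(x_\nu)\|=o(d(x_\nu,\s)^{r+1}),\qquad \kappa(dg(x_\nu))=o(d(x_\nu,\s)^{r}),
\]
and, after thinning, $\|x_{\nu+1}\|<\tfrac{1}{3}d(x_\nu,\s)$, so the balls $B_\nu=\{\|x-x_\nu\|\le\tfrac{1}{4}d(x_\nu,\s)\}$ are pairwise disjoint and disjoint from $\s$. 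By Remark~\ref{remark210}(3) I may assume $j=1$ realises the infimum in $\kappa(dg(x_\nu))$, and invoke Lemma~\ref{bochnakkuo1} with $u_\nu^{(k)}=\mathrm{grad}\,g_k(x_\nu)$, $\alpha_\nu=d(x_\nu,\s)$ and $s=r$ to obtain vectors $\lambda_\nu^{(2)},\dots,\lambda_\nu^{(p)}\in\R^n$ with $\|\lambda_\nu^{(k)}\|=o(d(x_\nu,\s)^{r})$ satisfying the linear-independence condition (b) and the spanning condition (c) of that lemma.

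Next I define a perturbation $\eta=(\eta_1,\dots,\eta_p)$ exactly as in Theorem~\ref{RelativeKuoThm1}, by setting on each $B_\nu$
\[
\eta_1(x)=\psi\!\left(\tfrac{x-x_\nu}{d(x_\nu,\s)}\right)\bigl(g_1(x_\nu)+\epsilon_\nu\|x-x_\nu\|^2\bigr),
\]
and, for $2\le k\le p$,
\[
\eta_k(x)=\psi\!\left(\tfrac{x-x_\nu}{d(x_\nu,\s)}\right)\bigl(g_k(x_\nu)-\langle\lambda_\nu^{(k)},x-x_\nu\rangle\bigr),
\]
with $\eta\equiv 0$ off $\bigcup_\nu B_\nu$ and a sequence $\epsilon_\nu=o(d(x_\nu,\s)^{r+1})$ to be fine-tuned later. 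The crucial new step is to verify that $\eta\in\mathcal{E}_{[r+1]}(n,p)$ with $j^r\eta(\s;0)=0$. Differentiating the scaled bump factor picks up at most $d(x_\nu,\s)^{-|\alpha|}$, while the multiplying coefficients are of size $o(d(x_\nu,\s)^{r+1})$ (for $|g_k(x_\nu)|$ and $\|\lambda_\nu^{(k)}\|\,d(x_\nu,\s)$) or $o(d(x_\nu,\s)^{r+3})$ (for $\epsilon_\nu\|x-x_\nu\|^2$), so a routine Leibniz estimate gives $|\partial^\alpha\eta(x)|=o(d(x,\s)^{r+1-|\alpha|})$ on each $B_\nu$ for every $|\alpha|\le r+1$. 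Since $d(x,\s)\thickapprox d(x_\nu,\s)$ on $B_\nu$ and the $B_\nu$ accumulate only at $0$, this forces $\eta\in C^{r+1}$ with $j^{r+1}\eta(0)=0$, and $\eta\equiv 0$ in a neighbourhood of every point of $\s\setminus\{0\}$, whence $j^r\eta(\s;0)=0$ and $\tilde g:=g-\eta$ is a realisation of $j^rf(\s;0)$ in $\mathcal{E}_{[r+1]}(n,p)$.

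Finally I exploit $\tilde g$ as at the end of the proof of Theorem~\ref{RelativeKuoThm1}: by construction $\tilde g(x_\nu)=0$, the gradients $\mathrm{grad}\,\tilde g_k(x_\nu)=\mathrm{grad}\,g_k(x_\nu)+\lambda_\nu^{(k)}$, $k\ge 2$, are linearly independent, and $\mathrm{grad}\,\tilde g_1(x_\nu)=\mathrm{grad}\,g_1(x_\nu)$ lies in their span. Hence $M_\nu:=\{\tilde g_k=0,\ k\ge 2\}$ is locally a $C^{r+1}$-submanifold of codimension $p-1$ through $x_\nu$, and a further fine tuning of $\epsilon_\nu$ (still $o(d(x_\nu,\s)^{r+1})$) makes $x_\nu$ a non-degenerate critical point of $\tilde g_1|_{M_\nu}$ with value $0$. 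In the case $n>p$ the dimension of $M_\nu$ is $n-p+1\ge 2$, so $\tilde g^{-1}(0)\cap V_\nu$ is a quadric cone which fails to be a topological manifold of codimension $p$ at $x_\nu$; since $x_\nu\in\tilde g^{-1}(0)\setminus\s$ and $x_\nu\to 0$, the $\s$-$V$-sufficiency hypothesis combined with the $C^{r+1}$-analogue of Lemma~\ref{transv} (whose Sard-theoretic construction, based on a smooth flat function $h$ with $h^{-1}(0)=\s$, goes through verbatim in the $C^{r+1}$-setting) produces a competitor $g_0$ whose zero-set off $\s$ is a $C^{r+1}$ submanifold of codimension $p$, in contradiction with the topology of $\tilde g^{-1}(0)$ near $x_\nu$. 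The main obstacle is the $C^{r+1}$-regularity check for $\eta$, which dictates both the sharper decay of $\epsilon_\nu$ and the exponent $s=r$ in the Bochnak--Kuo lemma; a second, more subtle point is the limit case $n=p$, where $\dim M_\nu=1$ precludes the indefinite-Hessian obstruction and an additional device is needed (for instance, a signature/injectivity argument near $x_\nu$ matched against the generic smooth realisation provided by the $C^{r+1}$-version of Lemma~\ref{transv}).
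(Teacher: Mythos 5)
Your argument is, in substance, the paper's own: negate the $\s$-regularity of $\mathcal{H}^{\s}_{r+1}(g)$ for some realisation, extract a sequence $x_\nu\in\R^n\setminus\s$, $x_\nu\to 0$, with $\|g(x_\nu)\|=o(d(x_\nu,\s)^{r+1})$ and $\kappa(dg(x_\nu))=o(d(x_\nu,\s)^{r})$, run the Bochnak--Kuo perturbation of Theorems~\ref{RelativeKuoThm1} and \ref{RelativeKuoThm2} with every exponent shifted by one ($s=r$ in Lemma~\ref{bochnakkuo1}, $\epsilon_\nu=o(d(x_\nu,\s)^{r+1})$), so that the perturbed map is a $C^{r+1}$ realisation of $j^rf(\s;0)$ vanishing at the $x_\nu$ with the degenerate gradient configuration, and then contradict Lemma~\ref{transv}. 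Your explicit Leibniz verification that $\eta\in\mathcal{E}_{[r+1]}(n,p)$ with $j^r\eta(\s;0)=0$ is more detailed than the paper, which only records the estimates $|\eta_k(x)|\precsim o(d(x_\nu,\s)^{r+1})$, and your treatment of $n>p$ matches the paper's conclusion (the intersection with $M_\nu$ is either not a manifold at $x_\nu$ or reduces to a point, and neither is a topological manifold of codimension $p$ when $n>p$).

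The genuine gap is the case $n=p$, which the proposition covers and which you explicitly leave open (\emph{an additional device is needed}). Moreover the device you gesture at, an injectivity or signature argument near $x_\nu$, is the one used in Lemma~\ref{transv0} for $\s$-$C^0$-sufficiency, where an ambient homeomorphism conjugates the two realisations; under mere $\s$-$V$-sufficiency one only controls the zero sets, so injectivity of the realisation near $x_\nu$ is not a property you can transfer. The paper closes this case by following the proof of Theorem~\ref{RelativeKuoThm2}: the perturbed realisation vanishes at the points $x_\nu\notin\s$ accumulating at $0$, so the germ at $0$ of its zero set off $\s$ is non-empty; by $\s$-$V$-sufficiency the same holds for a (subanalytic) realisation of the jet, the curve selection lemma then yields an analytic arc in the zero set off $\s$, and this contradicts Lemma~\ref{transv}, whose competitor realisation has zero set off $\s$ a smooth submanifold of codimension $p$, hence discrete when $n=p$. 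That route relies on the subanalytic setting of Theorem~\ref{RelativeKuoThm2}; to cover $n=p$ you must either work in that setting or supply a substitute argument, and your proposal currently does neither.
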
  

\begin{proof} 
If not, then we can find a realisation 
$\tilde g$ of $j^rf(\s ;0)$ in ${\mathcal E}_{[r+1]}(n,p),$ 
a sequence $ (x_\nu)_{\nu\geq 1} $ of points of $\mathbb{R}^n \setminus \s$ 
converging to $0 \in \R^n$ such that 
\begin{equation}\label{kk2}
d(x_\nu,\s) \kappa(d \tilde g (x_\nu))+\| \tilde g(x_\nu)\| = 
o(d(x_\nu,\s)^{r+1}).
\end{equation}  

Extracting a subsequence from $(x_\nu)_{\nu\geq 1}$ if necessary, 
one can assume that
$$
\Vert x_{\nu+1}\Vert<\frac{1}{3} d(x_\nu, \s) 
$$ 
which implies, in particular, that $ d(x_\nu,\s) $ decreases,
and that condition \eqref{kk2} implies:
\begin{enumerate}[1)]
\item $| \tilde g_{k}(x_\nu)|=o(d(x_\nu,\s)^{r+1})$, for all  $1\leqq k\leqq p$;
\item $\delta_{\nu}=o(d(x_\nu,\s)^{r})$
where 
$$
\dis\delta_{\nu} := \kappa(d \tilde g(x_\nu)) =
\dist(\mathrm{grad}\tilde g_{j}(x_\nu),\sum_{k \ne j} \mathbb{R} \ 
\mathrm{grad}\tilde g_{k}(x_\nu)) \text{ for some  $j$, $1 \le j \le p.$ } 
$$
\end{enumerate}
Now  adapting the proofs of Theorem \ref{RelativeKuoThm1} and Theorem 
\ref{RelativeKuoThm2}, we first notice that the germ $\eta$,  in this case 
satisfies the inequalities 
\begin{equation}\label{eq-d3}
|\eta_{1}(x)| \leq K(|\tilde g_{1}(x_\nu)|+\epsilon_{\nu}\|x-x_\nu\|^{2})
\precsim o(d(x_\nu,\s)^{r+1})+\epsilon_{\nu}d(x_{\nu},\s)^{2},
\end{equation}
\begin{equation}\label{eq-d4}
|\eta_{k}(x)| \leq K(|\tilde g_{k}(x_\nu)|+\|\lambda_{\nu}^{(k)}\|\|x\ -x_\nu\|) 
\precsim o(d(x_\nu,\s)^{r+1}), \ \ k = 2, \cdots, p.
\end{equation}
Then for a suitable choice of the sequence $(\epsilon_{\nu})_{\eta\geq 1}$, ($\epsilon_{\nu}=o(d(x_\nu,\s)^{r+1})$), we  have $\eta(x)=o(d(x,\s)^{r+1}), $ and then $g=\tilde g-\eta$ is a $C^{r+1}$-realisation of $j^rf(\s ;0).$ We carry on the same argument to contruct in each cases, $n>p$ and $n=p,$ a realisation which contradict Lemma \ref{transv}. 
\end{proof} 

\section{ Rigidity and Relative $SV$-determinacy}

Let $\mathcal{E}(n)^p$, $n \ge p$, be the set of $C^{\infty}$ 
map-germs : $\R^n \to \R^p$ at $0 \in \R^n$, and let $\s$ be 
a germ of closed subset of $\mathbb{R}^n$ such that $0\in \s.$
We say that $f \in \mathcal{E}(n)^p$ is {\em finitely} 
$\s-SV$-{\em determined} (resp. {\em finitely} $\s-V$-{\em determined}) 
if there is a positive integer $k$
such that for any $g \in \mathcal{E}(n)^p$ with $j^k g(\s ;0) = 
j^k f(\s ;0),$ $g$ is $\s-SV$-equivalent (resp. $\s-V$-equivalent) to $f$.
Concerning finite $SV$-determinacy or finite $V$-determinacy in the 
non-relative case, lots of characterisations have been obtained 
(see J. Bochnak - T.-C. Kuo \cite{bochnakkuo}). 

Let $\varphi=(\varphi_{1},\ldots,\varphi_{p}): \R^n \to \R^p$, $n \ge p$,
be a $C^{\infty}$ map-germ at $0 \in \R^n$.
We denote by $I_{K}(\varphi)$ the ideal of ${\mathcal E}(n)$ generated by 
$ \varphi_1,\ldots, \varphi_p$ and the Jacobian determinants
$$
\dis \frac{D(\varphi_1,\ldots, \varphi_p)}{D(x_{i_1},\ldots, x_{i_p})}(x),
\ (1\leq i_1<\ldots<i_p\leq n),
$$
and we let 
$\dis
\dis Z({\varphi},x):=\sum_{1\leq i_1<\ldots<i_{p}\leq n} 
\left|\frac{D(\varphi_1,\ldots, \varphi_p)}
{D(x_{i_1},\ldots,x_{i_{p}})}(x)\right|^2 + \sum_{j=1}^{p} \varphi_j(x)^2.
$

In the case where $n > p$, we define also the ideal of ${\mathcal E}(n)$, 
denoted by $I_{T}(\varphi)$, generated by $ \varphi_1,\ldots, \varphi_p$ and 
the Jacobian determinants
$$
\dis \frac{D(\varphi_1,\ldots, \varphi_p,\rho)}
{D(x_{i_1},\ldots,x_{i_{p+1}})}(x), \ (1\leq i_1<\ldots<i_{p+1}\leq n)
$$
here $\rho(x)=\|x\|^2.$
In the case where $n = p$, we define the ideal $I_{T}(\varphi)$
of ${\mathcal E}(n)$, as the ideal generated by only
$ \varphi_1,\ldots, \varphi_p$.

Let ${\frak m}_{\Sigma}^\infty$ be the ideal of ${\mathcal E}(n)$ consisting of germs $f$ 
such that $j^\infty f(x)=0$ for all $x\in \Sigma$, namely 
$\dis {\frak m}_{\Sigma}^\infty = \{f\in {\mathcal E}(n):\, j^{\infty} f(\s ;0)=0\}.$

Let $r$ be a positive integer, and let
$\varphi=(\varphi_{1},\ldots,\varphi_{p}): \R^n \to \R^p$, $n \geq p$, 
be a $C^r$ map-germ at $0 \in \R^n$.
We denote by $\mathcal{E}_{[r]}(n)$ be the ring of $C^r$ function-germs : 
$\R^n \to \R$ at $0 \in \R^n$, by $\mathcal{E}_{[r]}(n)^p$ the set 
of $C^r$ map-germs : $\R^n \to \R^p$ at $0 \in \R^n$, and by
$\mathcal{E}_{[r]}(n)( \varphi)$ the ideal of $\mathcal{E}_{r}(n)$ 
generated by $\varphi_1,\ldots, \varphi_p$.

\begin{defn} We call $\varphi \in \mathcal{E}(n)^p$ 
$\s$-$C^r$-{\em rigid} if there is a positive integer $k$ for which the 
following holds:\\
for any $\psi\in \mathcal{E}(n)^p$ such that $j^k\varphi=j^k\psi$ 
on $\s,$ there exists 
$\tau \in \mathcal{R}_\s^{\text{fix}}$ such that 
$$ 
\mathcal{E}_{[r]}(n)( \varphi\circ \tau)= \mathcal{E}_{[r]}(n)( \psi). 
$$
\end{defn} 

\begin{defn} Let $I$ be an ideal of $ \mathcal{E}(n).$
We say that $I$ is {\it $\s$-elliptic} if there is $f\in I$ such that 
$\dis |f(x)|\geq Cd(x,\s)^\alpha$
in a neighbourhood of $0$, where $C$ and $\alpha$ are positive constants.
We call such  $f$ an {\em elliptic element} of $I.$
\end{defn} 

\begin{rmk} If  the ideal $I$ is $\s$-elliptic and generated by $f_{1},\ldots, f_{k}$, then $f_{1}^2+\ldots+f_{k}^2$ is  an elliptic element of $I.$
\end{rmk} 

We have the following Lemma, which is a slight modification  of a result of  J.-C. Tougeron and  J. Merrien \cite{MT}.
We give the proof for completeness.

\begin{lem}\label{elliptic}
Let $I$ be  a finitely generated ideal of $ \mathcal{E}(n).$
Then the following conditions are equivalent:
\begin{enumerate}[(1)]
\item $I$ is $\s$-elliptic.
\item ${\frak m}_{\Sigma}^\infty\subset {\frak m}_{\Sigma}^\infty I$
\item ${\frak m}_{\Sigma}^\infty\subset I$
\end{enumerate}

\end{lem}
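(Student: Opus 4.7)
The plan is to prove $(1) \Rightarrow (2) \Rightarrow (3) \Rightarrow (1)$, with the last implication being the substantive one.

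For $(1) \Rightarrow (2)$, let $f \in I$ be elliptic with $|f(x)| \ge c\, d(x,\s)^{\alpha}$ near $0$, and let $g \in {\frak m}_{\Sigma}^{\infty}$. Define $h := g/f$ on $\R^{n}\setminus \s$ and extend by $0$ on $\s$; then $g = f\cdot h$, so it suffices to prove $h \in {\frak m}_{\Sigma}^{\infty}$, which would give $g \in {\frak m}_{\Sigma}^{\infty}\cdot I$. On $\R^{n}\setminus \s$, Leibniz applied to $h = g\cdot(1/f)$ together with a Fa\`a di Bruno-type formula for derivatives of $1/f$ yields
\[
|\partial^{\beta} h(x)| \;\le\; C_{\beta,N}\, d(x,\s)^{\,N - (|\beta|+1)\alpha}
\]
for every multi-index $\beta$ and every $N \in \N$. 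This combines Lemma \ref{lemrflat} for $g$ and its partial derivatives (which also lie in ${\frak m}_{\Sigma}^{\infty}$), local boundedness of the derivatives of $f$, and $|1/f(x)|\le c^{-1}d(x,\s)^{-\alpha}$. Since $N$ is arbitrary, a standard Taylor-remainder (Whitney-type) extension argument shows that the zero-extended $h$ is $C^{\infty}$ on $\R^{n}$ and infinitely flat on $\s$. The implication $(2) \Rightarrow (3)$ is immediate, since ${\frak m}_{\Sigma}^{\infty}\cdot I \subset I$.

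For $(3) \Rightarrow (1)$, write $I = (f_{1},\ldots,f_{k})$ and consider $G := \sum_{i=1}^{k} f_{i}^{2} \in I$. I would argue by contradiction: if $G$ is not elliptic, pick a sequence $x_{\nu} \to 0$ in $\R^{n}\setminus \s$ with $G(x_{\nu}) \le \delta_{\nu}^{2\nu}$, where $\delta_{\nu} := d(x_{\nu},\s)$, and, after extraction, $\|x_{\nu+1}\| < \tfrac{1}{3}\delta_{\nu}$, so that the balls $B(x_{\nu}, r_{\nu})$ with $r_{\nu} := \tfrac{1}{4}\delta_{\nu}$ are pairwise disjoint and disjoint from $\s$. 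Fix $\psi \in C_{c}^{\infty}(\R^{n})$ with $\psi(0)=1$ and $\operatorname{supp}(\psi) \subset B(0,1)$, set $\epsilon_{\nu} := \delta_{\nu}^{\nu/2}$, and define
\[
\phi(x) \;:=\; \sum_{\nu\ge 1}\, \epsilon_{\nu}\, \psi\!\left(\tfrac{x-x_{\nu}}{r_{\nu}}\right).
\]
I would verify that $\phi \in C^{\infty}(\R^{n}) \cap {\frak m}_{\Sigma}^{\infty}$. However, if $\phi \in I$, writing $\phi = \sum a_{i}f_{i}$ with smooth $a_{i}$ and applying Cauchy--Schwarz gives $\phi(x)^{2} \le A(x)\,G(x) \le A_{0}\,G(x)$ locally near $0$, where $A = \sum a_{i}^{2}$. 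Evaluating at $x_{\nu}$ yields $\delta_{\nu}^{\nu} \le A_{0}\,\delta_{\nu}^{2\nu}$, which fails for large $\nu$. Hence $\phi \notin I$, contradicting (3), so $G$ must be elliptic.

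The main obstacle lies in the last step: verifying $\phi \in C^{\infty}\cap {\frak m}_{\Sigma}^{\infty}$, particularly infinite flatness at points $y \in \s$ lying in the closure of $\{x_{\nu}\}$ (notably at $y=0$). The derivative estimate $|\partial^{\beta}[\epsilon_{\nu}\psi((\cdot - x_{\nu})/r_{\nu})]| \le C_{\beta}\,\delta_{\nu}^{\nu/2 - |\beta|}$, together with the geometric inequality $d(x,y) \ge \tfrac{3}{4}\delta_{\nu}$ whenever $x \in B(x_{\nu},r_{\nu})$ and $y \in \s$, must be handled by splitting, for each order $N$ of flatness and each $\beta$, the finitely many small-$\nu$ bumps (whose supports are bounded away from $y$ and thus contribute nothing for $x$ in a small enough neighbourhood of $y$) from the tail $\nu \ge 2(|\beta|+N)$, for which $\delta_{\nu}^{\nu/2-|\beta|} \precsim d(x,y)^{N}$ and the required estimate follows directly.
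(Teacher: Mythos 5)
Your proof is correct and follows essentially the same route as the paper: $(1)\Rightarrow(2)$ by dividing a flat function by an elliptic element and checking the quotient is flat (the paper delegates this to Tougeron, Prop.\ IV.4.2, where you sketch the derivative estimates and flat-extension argument directly), and $(3)\Rightarrow(1)$ by the same bump-sum construction at points where the generators are too small, your Cauchy--Schwarz step with $\sum f_i^2$ replacing the paper's triangle-inequality estimate with $\sum |f_i|$. These are only cosmetic variations, so no further comparison is needed.
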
 

\begin{proof}
Let $f_{1},\ldots, f_{l}$ be the generators of $I.$ 

We first show the implication $(1)\Rightarrow (2).$
Let $f=f_{1}^2+\ldots+f_{l}^2.$
By Leibniz formula and the assumption on $I$, in a neighbourhood of $0$, we 
have: for each multi-index $\beta$  there exists $C_{\beta} > 0$ such that 
$$
\left|\displaystyle \frac{\partial^{|\beta|}(1/f)(x)}
{\partial x^{\beta}}\right|\leq\frac{C_{\beta}}{|f(x)|^{(|\beta|+1)}},
$$ 
and there exists $C > 0$ such that 
$$|f(x)|\geq Cd(x,\s)^\alpha.$$

Now, by Proposition VI $  \, 4.2$ of \cite{tougeron},  for any 
$\varphi\in {\frak m}_{\Sigma}^\infty$, 
$\dis \frac{\varphi}{f}\in {\frak m}_{\Sigma}^\infty$.
It follows that $ \varphi\in {\frak m}_{\Sigma}^\infty I.$

The implication $(2)\Rightarrow (3)$ is obvious.

We lastly show the implication $(3)\Rightarrow (1)$.
Suppose that $I$ is not $\s$-elliptic.
Then we can construct 
a sequence of points $x_{k}\in  \mathbb{R}^n\setminus \s,$ converging to 
$0 \in \R^n$ such that 
$\dis  \sum_{i=1}^l |f_{i}(x_{k})|\leq d(x_{k},\s)^{k+1}.$ 
Taking a subsequence if necessary, we may assume that the balls 
$B_{k}=B(x_{k}, \frac{1}{2}d(x_{k},\s))$ are all disjoints.

Let $g_{k}\in \mathcal{E}(n)$ such that $g_{k}(x_{k})=1$
and $g_{k}=0$ on the complement of $B_{k}$ and satisfying:\\ 
for each multi-index $\beta$  there exists a positive constant $C_{\beta}$ 
such that on $B_{k}$
$$
\left|\displaystyle \frac{\partial^{|\beta|}g(x)}
{\partial x^{\beta}}\right|\leq\frac{C_{\beta}}{d(x_{k},\s)^{|\beta|}}.
$$  
Then $\dis \sum_{k\in \mathbb{N}}  g_{k}d(x_{k},\s)^{k}$ converges to a 
function $g\in {\frak m}_{\Sigma}^\infty.$ 
 
By the assumption $(3)$ we have $g\in I$.
Then it follows that there exists $C>0$ such that 
$\dis |g(x_{k})|\leq C \sum_{i=1}^k |f_{i}(x_{k})|$.
Therefore we have $d(x_{k},\s)^k\leq Cd(x_{k},\s)^{k+1}$, which is impossible.
This is a contradiction.
Thus $I$ is $\s$-elliptic
\end{proof} 

As a consequence we have the following proposition:

\begin{prop}\label{prop664}
For $\varphi\in \mathcal{E}(n)^p$ , the following conditions are 
equivalent:

\begin{enumerate}[(1)]
\item There exist $C,\alpha ,\beta > 0$ such that $Z(\varphi,x)
\geq Cd(x,\s)^{\alpha}$ for $\vert x\vert < \beta .$
\item ${\frak m}_{\Sigma}^\infty \subset I_{K}(\varphi).$\\
If moreover  $\s$ is subanalytic and $\varphi$ is analytic, they are also equivalent to:
\item ${\frak m}_{\Sigma}^\infty \subset I_{T}(\varphi).$
\item The set germ at $0$, $\sing(\varphi)\cap \varphi^{-1}(0)$, is contained in $\s.$
\end{enumerate}
\end{prop}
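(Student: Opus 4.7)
The plan is to prove all equivalences by reducing to the ellipticity of canonical sum-of-squares elements and then invoking Theorem \ref{equivKT} and the Lojasiewicz inequality.

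For the equivalence $(1) \iff (2)$: the function $Z(\varphi, x) = \sum_I |J_I|^2 + \sum_j \varphi_j^2$ is a sum of squares of generators of $I_K(\varphi)$, hence lies in $I_K(\varphi)$. Condition $(1)$ is therefore precisely the statement that $Z(\varphi, \cdot)$ is an elliptic element of $I_K(\varphi)$. Conversely, by the remark preceding Lemma \ref{elliptic}, $I_K(\varphi)$ is $\s$-elliptic iff its canonical sum-of-squares $Z(\varphi, \cdot)$ is elliptic, and Lemma \ref{elliptic} then yields the equivalence with $\mathfrak{m}_{\Sigma}^\infty \subset I_K(\varphi)$.

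Now assume $\varphi$ analytic and $\s$ subanalytic. The same reasoning shows that $(3)$ is equivalent to $T_2(\varphi, x) = \sum_{I'}|J'_{I'}|^2 + \|\varphi\|^2 \succsim d(x,\s)^\beta$ for some $\beta > 0$. To match this with $(1)$, I compare $Z$ with the Kuo quantity $K_2(\varphi, x) = \|x\|^2 \sum|J_I|^2 + \|\varphi\|^2$. Near $0$ one has $K_2 \leq Z$ trivially. For the reverse direction, assuming $Z \succsim d(x,\s)^\alpha$, I split into two cases according to whether $\|\varphi\|^2 \geq Z/2$ or $\sum|J_I|^2 \geq Z/2$; using $d(x,\s) \leq \|x\|$ (since $0 \in \s$), in either case one obtains $K_2 \succsim d(x,\s)^{\alpha + 2}$. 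Hence $K_2$ is elliptic iff $Z$ is, and Theorem \ref{equivKT} then gives $K_2 \thickapprox T_2$, from which $(1) \iff (3)$.

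For $(1) \iff (4)$: note that $Z(\varphi, x) = 0$ iff $\varphi(x) = 0$ and every $J_I(x) = 0$, so $Z^{-1}(0) = \sing(\varphi) \cap \varphi^{-1}(0)$ as germs at $0$. The implication $(1) \Rightarrow (4)$ is immediate, since a lower bound $Z \succsim d(x,\s)^\alpha$ forces every zero of $Z$ to lie in $\s$. For $(4) \Rightarrow (1)$, the inclusion $Z^{-1}(0) \subset \s = d(\cdot,\s)^{-1}(0)$, together with the continuity and subanalyticity of both $Z(\varphi, \cdot)$ and $d(\cdot,\s)$, permits an application of the Lojasiewicz inequality (as in the proof of Proposition \ref{prop223}, following \cite{lojasiewicz} \S 18), yielding $d(x,\s)^\alpha \precsim Z(\varphi, x)$ locally, which is exactly $(1)$. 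The main subtlety, though routine, is the bookkeeping in the comparison between $Z$ and the weighted Kuo quantity $K_2$; the substantive input is Theorem \ref{equivKT}, whose proof via the curve selection lemma is what forces the analytic hypothesis on $\varphi$.
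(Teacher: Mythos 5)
Your proof is correct and takes essentially the same route as the paper: condition (1) is ellipticity of the canonical sum of squares, hence of $I_{K}(\varphi)$, and Lemma \ref{elliptic} with the preceding remark gives (1)$\iff$(2) and likewise reduces (3) to ellipticity of $T_{2}$, while Theorem \ref{equivKT} bridges $K_{2}$ and $T_{2}$ and the Lojasiewicz inequality gives (4)$\Rightarrow$(1). Your explicit two-case comparison of $Z$ with $K_{2}$ merely fills in a step the paper leaves implicit.
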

\begin{proof} The equivalence between $(1)$, $(2)$ and $(3)$ follows from 
Theorem \ref{equivKT} and Lemma \ref{elliptic};
$(1)$ implies $(4)$ trivially and the converse is the inequality of 
Lojasiewicz, since $\s$ is subanalytic, $Z(\varphi,x)$ is analytic and 
$\{Z(\varphi,x)=0\}=\sing(\varphi)\cap \varphi^{-1}(0).$
\end{proof} 

\begin{defn}\label{coherent}
A germ of closed subset $\s$  of $ \mathbb{R}^n$ is called {\it coherent} if ${\frak m}_{\Sigma}$ is a finitely generated ideal of $ \mathcal{E}(n).$
\end{defn} 

This definition is inspired by the following result of W. Kucharz proved in \cite{kucharz}: an analytic and semi-algebraic subset  $X$ in an open subset $U$ of $ \mathbb{R}^n$ is coherent if and only if ${\frak m}_{X}$ is a finitely generated ideal of $ \mathcal{E}(n).$ In particular,  $\s=\{0\}$ is coherent.

Let us give a generalisation of the Bochnak-Kuo theorem in \cite{bochnakkuo} 
as follows.

\begin{thm}\label{T1} Let $\s$ be a coherent  germ of closed subset of $\mathbb{R}^n$ such that $0\in \s.$
Then the following conditions are equivalent for 
$\varphi \in \mathcal{E}(n)^p$ where $n> p$:

\begin{enumerate}[(1)]
\item For each $r \in \mathbb{N}$, $\varphi$ is $\Sigma$-$C^{r}$-rigid.
\item $\varphi$ is finitely $\Sigma$-SV-determined.
\item $\varphi$ is finitely $\Sigma$-V-determined.
\item  $I_{K}(\varphi)$ is $\s$-elliptic.
\item ${\frak m}_{\Sigma}^\infty \subset I_{K}(\varphi).$\\
If moreover $\varphi$ is analytic, they are also equivalent to:
\item ${\frak m}_{\Sigma}^\infty \subset I_{T}(\varphi).$

\end{enumerate}
\end{thm}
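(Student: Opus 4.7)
The plan is to prove the equivalences via the cycle $(1) \Rightarrow (2) \Rightarrow (3) \Rightarrow (5) \Rightarrow (1)$, supplemented by the algebraic equivalences $(4) \Leftrightarrow (5)$ and, in the analytic case, $(5) \Leftrightarrow (6)$. These algebraic equivalences are immediate from earlier results: applying Lemma \ref{elliptic} to the (finitely generated) ideal $I_K(\varphi)$ gives $(4) \Leftrightarrow (5)$, while $(5) \Leftrightarrow (6)$ in the analytic case is precisely Proposition \ref{prop664}. The implication $(1) \Rightarrow (2)$ is short: $\s$-$C^0$-rigidity at determinacy order $k$ produces $\tau \in \mathcal{R}_\s^{\text{fix}}$ with $\mathcal{E}_{[0]}(n)(\varphi \circ \tau) = \mathcal{E}_{[0]}(n)(\psi)$, whence the common zero-set condition $(\varphi\circ\tau)^{-1}(0) = \psi^{-1}(0)$ makes $\tau$ the required $\s$-$SV$-equivalence; and $(2) \Rightarrow (3)$ is immediate from the definitions.

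The substantive direction $(3) \Rightarrow (5)$ I would argue by contradiction, adapting the strategy of Theorem \ref{RelativeKuoThm1}. If ${\frak m}_\s^\infty \not\subset I_K(\varphi)$, Lemma \ref{elliptic} produces a sequence $x_\nu \to 0$ in $\R^n \setminus \s$ along which $Z(\varphi,x_\nu) = o(d(x_\nu,\s)^s)$ for every $s \in \N$; in particular both $\|\varphi(x_\nu)\|$ and $\kappa(d\varphi(x_\nu))$ are flat to every order in $d(x_\nu,\s)$. Applying the strong Bochnak-Kuo Lemma (Remark \ref{bochnakkuo2}) yields perturbation vectors $\lambda_\nu^{(k)}$ flat to all orders such that the vectors $\grad \varphi_2(x_\nu)+\lambda_\nu^{(2)},\ldots,\grad \varphi_p(x_\nu)+\lambda_\nu^{(p)}$ are linearly independent and span a subspace containing $\grad \varphi_1(x_\nu)$. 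Using the bump-function perturbation $\eta$ of the proof of Theorem \ref{RelativeKuoThm1}, but choosing the coefficients $\epsilon_\nu$ to decay faster than every power of $d(x_\nu,\s)$, the map $g = \varphi - \eta$ lies in $\mathcal{E}(n)^p$ with $j^\infty g(\s;0) = j^\infty \varphi(\s;0)$, while the non-degeneracy choice of $\epsilon_\nu$ together with $n > p$ forces $g^{-1}(0)$ to fail being a codimension-$p$ topological manifold at each $x_\nu$. The Sard-theoretic argument of Lemma \ref{transv} then shows that $g$ cannot be $\s$-$V$-equivalent to $\varphi$, contradicting finite $\s$-$V$-determinacy at any order $k$.

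For $(5) \Rightarrow (1)$ I would integrate a Kuo-type vector field. Given $\psi$ with $j^k\psi(\s;0) = j^k\varphi(\s;0)$ for $k$ large (depending on $r$), set $h = \psi - \varphi$ and consider the interpolation $F_t = \varphi + th$ for $t \in [0,1]$. The containment ${\frak m}_\s^\infty \subset I_K(\varphi)$ lets me decompose each component of $h$ as a $C^r$-combination of the components $\varphi_j$ and of the maximal minors of $d\varphi$, with coefficients flat along $\s$ to sufficient order; the minor terms absorb the transverse part of $\partial_t F_t$, while the $\varphi_j$-terms furnish an invertible matrix factor $A(x,t)$ realising a contact equivalence. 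Solving the infinitesimal equation $\partial_t F_t + dF_t \cdot X = A\,F_t$ for a time-dependent vector field $X$ with $\|X\| \precsim d(\cdot,\s)$, extending $X$ by zero on $\s$, and integrating via Proposition \ref{integral}, the time-$1$ flow $\tau \in \mathcal{R}_\s^{\text{fix}}$ yields $\psi = M \cdot (\varphi\circ\tau)$ for an invertible $C^r$ matrix germ $M$, which is exactly $\mathcal{E}_{[r]}(n)(\varphi\circ\tau) = \mathcal{E}_{[r]}(n)(\psi)$.

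The principal obstacle is $(3) \Rightarrow (5)$: the construction must simultaneously keep the perturbation $\eta$ flat on $\s$ to every order (which is why the strong form of the Bochnak-Kuo Lemma, rather than the finite-order version used in Theorem \ref{RelativeKuoThm1}, is indispensable here), arrange non-degeneracy destroying the codimension-$p$ manifold structure of $g^{-1}(0)$ at every $x_\nu$, and finally extract a genuine contradiction with $V$-determinacy via Lemma \ref{transv}. A secondary difficulty, which is where the coherence hypothesis on $\s$ enters crucially, is verifying in $(5)\Rightarrow(1)$ that the homological solution $X$ satisfies the relative Lipschitz bound $\|X\| \precsim d(\cdot,\s)$ required by Proposition \ref{integral}: this reduces to controlling the division of a flat $C^r$-germ by an elliptic element of $I_K(\varphi)$, an argument of Tougeron-Merrien type that relies on finite generation of ${\frak m}_\s$.
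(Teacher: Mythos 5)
Your reductions $(1)\Rightarrow(2)\Rightarrow(3)$, the equivalences $(4)\Leftrightarrow(5)$ via Lemma \ref{elliptic} and $(5)\Leftrightarrow(6)$ via Proposition \ref{prop664}, and your contradiction argument for $(3)\Rightarrow(5)$ (non-ellipticity gives a sequence $x_\nu\to 0$ on which $Z(\varphi,\cdot)$ is flat to every order in $d(\cdot,\s)$, the strong form of the Bochnak--Kuo Lemma from Remark \ref{bochnakkuo2}, the bump perturbation $\eta$ flat on $\s$, failure of $g^{-1}(0)$ to be a codimension-$p$ topological manifold near $x_\nu$ when $n>p$, and the Sard-type argument of Lemma \ref{transv}) follow the paper's proof of $(3)\Rightarrow(4)$ essentially step by step, so that part is fine.

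The genuine gap is in $(5)\Rightarrow(1)$. Condition (1) asks for $\tau\in\mathcal{R}_\s^{\text{fix}}$ such that $\varphi\circ\tau$ is a $C^r$ map and $\mathcal{E}_{[r]}(n)(\varphi\circ\tau)=\mathcal{E}_{[r]}(n)(\psi)$, an equality of ideals in the ring of $C^r$ germs. You integrate a vector field $X$ satisfying only the relative Lipschitz bound $\|X\|\precsim d(\cdot,\s)$ by appealing to Proposition \ref{integral}; but that proposition produces merely a continuous flow, so the time-one map $\tau$ is only a homeomorphism and the fundamental solution $M$ of the linear system $y'=A\,y$ along its (continuous) trajectories is only continuous in $x$. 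Then $\varphi\circ\tau$ need not be $C^r$, the ideal $\mathcal{E}_{[r]}(n)(\varphi\circ\tau)$ is not even defined (its generators must lie in $\mathcal{E}_{[r]}(n)$), and the claim that $M$ is an invertible $C^r$ matrix germ is unsupported: what your construction yields is a $C^0$ contact equivalence, strictly weaker than $\s$-$C^r$-rigidity for $r\ge 1$. To salvage the flow approach you would need to (i) solve the homological equation along the whole family $F_t=\varphi+th$, not just at $t=0$, which requires a Nakayama-type stability of $I_K$ under flat perturbations that you only hint at, and (ii) show, by taking the jet order $k$ large relative to $r$ and to the \L ojasiewicz exponent of the elliptic element $Z(\varphi,\cdot)$, that the coefficients (hence $X$ and $A$) are of class $C^r$ up to $\s$, and then invoke classical $C^r$ dependence of ODE solutions instead of Proposition \ref{integral}. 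The paper sidesteps integration altogether: coherence supplies generators $f_1,\dots,f_k$ of ${\frak m}_{\Sigma}$, the division argument gives $f_i^s/Z$ of class $C^{r+1}$ and hence ${\frak m}_{\Sigma}^{q}\subset\mathcal{E}_{[r+1]}(n)(I_K(\varphi))$ for suitable $q$, and then Nakayama's Lemma together with Tougeron's Implicit Function Theorem produces directly a $C^{r+1}$ diffeomorphism $\tau$ fixing $\s$ with the required ideal equality; this is also exactly where the coherence hypothesis is used, rather than in the division of a single flat germ as you suggest.
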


\begin{proof}
The implications $(1) \implies (2)\implies (3)$ are obvious by definition,
and the equivalence $(4) \Longleftrightarrow (5)$ follows from 
Lemma \ref{elliptic}.
Concerning the equivalence $(5) \Longleftrightarrow (6)$ in the analytic case,
see Proposition \ref{prop664}.

We first show the implication $(5) \Longrightarrow (1)$, namely 
we will show that ${\frak m}_{\Sigma}^\infty \subset I_{K}(\varphi)$ 
implies that for any $r\in \mathbb{N},$
there exists $s\in \mathbb{N}$ such that ${\frak m}_{\Sigma}^s \subset  \mathcal{E}_{[r+1]}(n)( I_{K}(\varphi)).$
Let  $\{f_{1},\ldots, f_{k}\}$ be a system of generators of 
${\frak m}_{\Sigma}.$

Since condition $(5)$ is equivalent to condition $(4)$, for $s$ large enough,  
$\dis\frac{f_{i}^{s}}{{Z}(\varphi,x)}$ is of class $C^{r+1}$ for any 
$i\in\{1,\ldots, k\}.$ 
Hence $f_{i}^{s}\in \mathcal{E}_{[r+1]}(n)( I_{K}(\varphi))$ and then  
${\frak m}_{\Sigma}^{(s-1)k+1} \subset 
\mathcal{E}_{[r+1]}(n)( I_{K}(\varphi)).$
We set $q := (s - 1)k + 1.$

We now show that $j^{2q}(\varphi)$ is $\s$-$ C^{r+1}$-rigid in $\mathcal{E}(n)^p$. Let $\psi\in\mathcal{E}^p$ be any element with $j^{2q}(\psi)=j^{2q}(\varphi)$ on $\s.$ Then $\mathcal{E}(n)(\varphi-\psi)\subset {\frak m}_{\Sigma}^{2q+1}$, hence
\begin{equation}\label{implicit}
\mathcal{E}_{[r+1]}(n)(\varphi-\psi)\subset\mathcal{E}_{[r+1]}(n)( {\frak m}_{\Sigma})(\mathcal{E}_{[r+1]}(n)(I_{K}(\varphi)))^{2}.
\end{equation}
We  first remark that 
$$
\mathcal{E}_{[r+1]}(n)(I_{K}(\varphi)) = \mathcal{E}_{[r+1]}(n)(\varphi)+\mathcal{E}_{[r+1]}(n)(J(\varphi)),
$$ 
where $J(\varphi)$ 
is the Jacobian ideal of $\varphi.$ 
From \eqref{implicit}, we have 
$$\mathcal{E}_{[r+1]}(n)(I_{K}(\varphi))=\mathcal{E}_{[r+1]}(n)(\psi)+ \mathcal{E}_{[r+1]}(n)(J(\varphi))+ \mathcal{E}_{[r+1]}(n)( {\frak m}_{\Sigma})(\mathcal{E}_{[r+1]}(n)(I_{K}(\varphi)))^2$$
and by Nakayama's Lemma we obtain:
$$\mathcal{E}_{[r+1]}(n)(\psi)+ \mathcal{E}_{[r+1]}(n)(J(\varphi))=\mathcal{E}_{[r+1]}(n)(I_{K}(\varphi)).$$
Thus  there exist $$\varphi_{1} \in \mathcal{E}_{[r+1]}(n)( {\frak m}_{\Sigma})(\mathcal{E}_{[r+1]}(n)(J(\varphi)))^{2}\text{ and } \psi_{1}\in \mathcal{E}_{[r+1]}(n)( {\frak m}_{\Sigma})(\mathcal{E}_{[r+1]}(n)(\psi))$$ such that
$\varphi-\psi=\psi_{1}-\varphi_{1}.$

For $x$ and $y$ in $ (\mathbb{R}^n,0) $, we define $F$ by 
$$ 
F(x,y) := \varphi(x+y)-\varphi(x)-\varphi_{1}(x) .
$$
Since, for $ i=1,\ldots,n $,   $F_{i}(x,0)$  belongs to  $\mathcal{E}_{[r+1]}(n)( {\frak m}_{\Sigma})(\mathcal{E}_{[r+1]}(n)(J(\varphi)))^{2}$, by Tougeron's 
Implicit Function Theorem (\cite{tougeron} page 56), there is a map $ g\,:\, (\mathbb{R}^n,0)\longrightarrow(\mathbb{R}^n,0) $ with components in $  \mathcal{E}_{[r+1]}(n)( {\frak m}_{\Sigma})(\mathcal{E}_{[r+1]}(n)(J(\varphi)))$ such that $ F(x,g(x))=0 $. Let $ \tau(x)=x+g(x) $. Clearly $ \tau $ is a germ
of diffeomorphism at the origin, and it coincides with the identity on
$ \s $, namely $\tau\in\mathcal{R}_{\s}^{\mathrm{fix}} .$
Now, by construction, we have  
$$
\varphi(\tau(x))=\varphi(x) + \varphi_{1}(x)=\psi(x) + \psi_{1}(x).
$$
Thus $\mathcal{E}_{[r+1]}(n)(  \varphi\circ\tau)=\mathcal{E}_{[r+1]}(n)(\psi),$ namely $\varphi$ is $\s$-$C^r$-rigid.


It remains to prove the implication $(3) \implies (4).$
Let $\varphi$ be a finitely $\Sigma$-V-determined germ.
We suppose that the germ at $0 \in \R^n$ of $\varphi^{-1}(0)\setminus \s$ 
is not empty.
Let $r\in \mathbb{N}$ such that $\varphi$ is $\s$-V-determined at degree $r$.
Then, using a similar argument to the proof of Lemma \ref{transv}, we can see 
that for all map $g \in \mathcal{E}_{[\infty ]}(n,p)$ with  
$j^rg(\s ;0) = j^r\varphi(\s ;0)$  the germ of 
$g^{-1}(0) \setminus \s$ at $0 \in \R^n$ is not empty and 
a topological manifold of $\R^n$ of codimension $p.$

We suppose that, on the contrary, condition $(4)$ is not satisfied. 
One can then find a sequence $ (x_\nu)_{\nu\geq 1} $ of points of 
$ \mathbb{R}^n $ converging to $0 \in \R^n$ and such that 
\begin{equation}\label{small}
Z(\varphi, x_\nu)=o(d(x_\nu,\s)^s)\text{ for any }s\in \mathbb{N} \text{ and any } \nu\ge 1.
\end{equation}
Extracting a subsequence if necessary, one can assume that
$ \Vert x_{\nu+1}\Vert<\frac{1}{3} d(x_\nu, \s) $, 
which implies, in particular, that $ d(x_\nu,\s) $ decreases. 
As in the proofs of Theorems \ref{RelativeKuoThm1} and 
\ref{RelativeKuoThm2}, we may assume that
$\dis
\kappa (df(x_{\nu}))
= d\ (\mathrm{grad}\varphi_{1}(x_\nu),\sum_{k=2}^p \mathbb{R}\,
\mathrm{grad}\varphi_{k}(x_\nu)).
$
Let
$\dis
\delta_{\nu} := d\ (\mathrm{grad}\varphi_{1}(x_\nu),\sum_{k=2}^p \mathbb{R}\,
\mathrm{grad}\varphi_{k}(x_\nu)).
$
Since
$$Z(\varphi, x_{\nu})=\displaystyle \sum_{k=1}^{p}\varphi_{k}^{2}(x_{\nu})+\sum_{1\leq i_1<\ldots<i_{p}\leq n} 
\left|\frac{D(\varphi_1,\ldots, \varphi_p)}
{D(x_{i_1},\ldots,x_{i_{p}})}(x_{\nu})\right|^2 \geq  \sum_{k=1}^{p}\varphi_{k}^{2}(x_{\nu})+\delta_{\nu}^{2p},$$
condition \eqref{small} implies:
\begin{enumerate}[1)]
\item $|\varphi_{k}(x_\nu)|=o(d(x_\nu,\s)^{s})$, for all $s\in \mathbb{N}$ and $1\leqq k\leqq p$;
\item $\delta_{\nu}=o(d(x_\nu,\s)^{s})$, for all $ s\in \mathbb{N}.$

\end{enumerate}

Now we apply the Bochnak-Kuo Lemma in \cite{bochnakkuo} 
(see Lemma \ref{bochnakkuo1} with Remark \ref{bochnakkuo2})
with $u_{\nu}^{(k)}=\mathrm{grad}\varphi_{k}(x_\nu)$ and $\alpha_{\nu}=d(x_\nu,\s)$, to find for each $\nu\in \mathbb{N},$   $p-1$ vectors,  
$\lambda_{\nu}^{( 2)}, \ldots, \lambda_{\nu}^{( p)}\in \mathbb{R}^{n}$ such that:

\begin{enumerate}[$\bf (a)$]
\item $\dis
\|\lambda_{\nu}^{(k)}\|=o(d(x_\nu,\s)^{s}),\ k=2,,\ldots,\ p;
$
\item  $\mathrm{grad}\varphi_{2}(x_{\nu})+\lambda_{\nu}^{( 2)},\ \ldots, \mathrm{grad}\varphi_{p}(x_{\nu})+\lambda_{\nu}^{( p)}$ are linearly independant in  $\mathbb{R}^{n}$;
\item $\dis \mathrm{grad}\varphi_{1}(x_{\nu})\in\sum_{k=2}^{p}\mathbb{R}\ (\mathrm{grad}\varphi_{2}(x_{\nu})+\lambda_{\nu}^{( 2)})$.
\end{enumerate}

Let $\psi: \mathbb{R}\rightarrow[0,1]$ be a $C^{\infty}$ function such that 
$\psi(t) = 1$ in a neighbourhood of $0\in \mathbb{R}$ and $\psi(t)=0$ for 
$|t|\displaystyle \geqq\frac{1}{4}.$ 
Let $f:\mathbb{R}^{n}\rightarrow \mathbb{R}$  a smooth flat function such that 
$f^{-1}(0)=\s.$  
We define a germ $\eta=(\eta_{1},\ldots,\eta_{p})$ by:
$$
\eta_{1}(x)=\sum_{\nu=1}^{\infty}\psi\circ f\left(\frac{x-x_\nu}{\|x_\nu\|}\right)(\varphi_{1}(x_\nu)+\epsilon_{\nu}\|x-x_\nu\|^{2}),
$$
$$
\eta_{k}(x)=\sum_{\nu=1}^{\infty}\psi\circ f\left(\frac{x-x_\nu}{\|x_\nu\|}\right)(\varphi_{k}(x_\nu)-\langle\lambda_{\nu}^{(k)},(x\ -x_\nu)\rangle),\ k=2,\ \ldots,\ p.
$$
Then 
\begin{enumerate}[i)]
\item If we choose $\epsilon_{\nu} >0$ such that for each $s\in \mathbb{N},\ \epsilon_{\nu}=o(d(x_\nu,\s)^{s}),$ then $\eta= (\eta_{1},\ \ldots,\ \eta_{p})$ is of class $C^{\infty}$,
\item $\eta$ is  flat on $\s$;
\item  For each $\nu\in \mathbb{N},\ (\varphi-\eta)(x_\nu)=0$.

\end{enumerate}

Let $g=\varphi-\eta$. 
We shall show that   we can  choose $\epsilon_{\nu},$  
$\epsilon_{\nu}=o(d(x_\nu,\s)^{s})$, such that near each $x_\nu,$ 
$ g^{-1}(g(x_{\nu}))=g^{-1}(0)$ is not a topological manifold of codimension 
$p$, if $n>p$.\\
By condition $\bf (b)$, there is  a small neighbourhood  $V_{\nu}$ of $x_{\nu}$, such that the set 
$$M_{\nu}=\{x\in V_{\nu}:\, \varphi_{k}(x)-\eta_{k}(x)=0,\ k=2,\ \ldots,p\}$$ is a smooth manifold of codimension $p-1.$

From condition $\bf (c),$ for each $\nu\in \mathbb{N},$  there are real numbers  $a_{2,\nu},\ldots, a_{p,\nu}$ such that,
$$
\mathrm{grad}\varphi_{1}(x_{\nu})=\sum_{k=2}^{p}a_{k,\nu}(\mathrm{grad}\varphi_{k}(x_{\nu})+\lambda_{\nu}^{( k)}).
$$
Choose now $\epsilon_{\nu}=o(d(x_\nu,\s)^{s})$ such that $x_\nu$ is a non-degenerate critical point of
$$
h_{\nu}(x)=\varphi_{1}(x)-\eta_{1}(x)+\sum_{k=2}^{p}a_{k,\nu}(\eta_{k}(x)-\varphi_{k}(x)).
$$
Then
$\dis 
g^{-1}(0)\cap V_{\nu}=\{x\in V_{\nu}:\, g_{1}(x)=g_{2}(x)=\ldots=g_{p}(x)=0\}
=\{x\in M_{\nu}:h_{\nu}(x)=0\}.
$
By the choice of  $\epsilon_{\nu}$, this set is the intersection of  the locus of a non-degenerate quadratic form $h_{\nu}^{-1}(0)$  with a  codimension $p-1$ manifold $M_{\nu}.$ Then if it is a topological manifold, necessarily it is reduces to a point. Now if $n-p\ge1,$  $g^{-1}(0)$ cannot be a topological manifold of codimension $p.$
This is a contradiction.
Thus the implication $(3) \implies (4)$ is shown.
\end{proof} 
\begin{thm}\label{T2} Let $\s$ be a  coherent subanalytic  germ of closed subset at $0\in\mathbb{R}^n.$  Then the following conditions are equivalent for 
analytic germ $\varphi : (\mathbb{R}^n,0)\to (\mathbb{R}^n,0):$
\begin{enumerate}[(1)]
\item For each $r \in \mathbb{N}$, $\varphi$ is $\Sigma$-$C^{r}$-rigid.
\item $\varphi$ is finitely $\Sigma$-SV-determined.
\item $\varphi$ is finitely $\Sigma$-V-determined.
\item  $I_{K}(\varphi)$ is $\s$-elliptic.
\item ${\frak m}_{\Sigma}^\infty \subset I_{K}(\varphi).$

\item ${\frak m}_{\Sigma}^\infty \subset I_{T}(\varphi).$

\end{enumerate}
\end{thm}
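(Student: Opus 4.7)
The plan is to follow the scheme of the proof of Theorem \ref{T1}, replacing its codimension-based step (valid only when $n-p\geq 1$) by an analytic-geometric argument that exploits the full strength of $\varphi$ being analytic and $\s$ being subanalytic. The implications $(1)\Rightarrow(2)\Rightarrow(3)$ are immediate from the definitions, and $(4)\Leftrightarrow(5)$ follows from Lemma \ref{elliptic} applied to the finitely generated ideal $I_K(\varphi)$. Proposition \ref{prop664} furnishes $(4)\Leftrightarrow(5)\Leftrightarrow(6)$ together with the useful reformulation
$$
(4)\ \Longleftrightarrow\ \sing(\varphi)\cap\varphi^{-1}(0)\subset\s\ \text{ as germs at }0, \quad (*)
$$
which I will adopt as the working form of $(4)$. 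The implication $(5)\Rightarrow(1)$ is obtained verbatim as in Theorem \ref{T1}: from $\mathfrak{m}_\s^\infty\subset I_K(\varphi)$ the function $f_i^s/Z(\varphi,\cdot)$ lies in $\mathcal{E}_{[r+1]}(n)$ for $s$ large enough, and Nakayama's lemma combined with Tougeron's implicit function theorem produces the $\tau\in\mathcal{R}_\s^{\text{fix}}$ witnessing $\s$-$C^{r}$-rigidity.

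The one genuinely new step is $(3)\Rightarrow(4)$. I plan to argue by contraposition: assume $\varphi$ is finitely $\s$-$V$-determined at degree $r$, but $(*)$ fails. Then by the curve selection lemma applied to the subanalytic set $\sing(\varphi)\cap\varphi^{-1}(0)\setminus\s$, there exists an analytic arc $\gamma:[0,\beta)\to\mathbb{R}^n$ with $\gamma(0)=0$ and $\gamma(t)\in\sing(\varphi)\cap\varphi^{-1}(0)\setminus\s$ for $t\in(0,\beta)$. Choose $x_\nu:=\gamma(t_\nu)$, $t_\nu\searrow 0$, thinned so that $\|x_{\nu+1}\|<\tfrac13 d(x_\nu,\s)$; on this sequence $Z(\varphi,x_\nu)\equiv 0$, so both $|\varphi_k(x_\nu)|$ and $\delta_\nu:=\kappa(d\varphi(x_\nu))$ vanish, and in particular are $o(d(x_\nu,\s)^s)$ for every $s\in\mathbb{N}$. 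I would then invoke the original Bochnak-Kuo Lemma (Remark \ref{bochnakkuo2}) with $u_\nu^{(k)}=\grad\varphi_k(x_\nu)$ and $\alpha_\nu=d(x_\nu,\s)$ to obtain corrections $\lambda_\nu^{(k)}$ flat to every order along $\s$, and build the smooth bump $\eta=(\eta_1,\ldots,\eta_p)$ of Theorem \ref{T1} with $\epsilon_\nu=o(d(x_\nu,\s)^s)$ for every $s$, so that $\eta\in\mathcal{E}(n)^p$ is flat on $\s$ and $\eta(x_\nu)=\varphi(x_\nu)=0$. Setting $g:=\varphi-\eta$ yields a $C^\infty$-realisation of $j^r\varphi(\s;0)$ with $g(x_\nu)=0$.

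The contradiction will come from the fact that in the square case $n=p$ the codimension-$(p-1)$ manifold $M_\nu$ constructed as in Theorem \ref{T1} is a $C^\infty$ curve, so $h_\nu|_{M_\nu}$ has a non-degenerate critical point at $x_\nu$ in one dimension, forcing the local set-germ $(g^{-1}(0),x_\nu)$ to coincide with $\{x_\nu\}$ for any choice of $\epsilon_\nu\neq 0$. Since $g=\varphi$ outside $\bigcup_\nu V_\nu$, the germ $(g^{-1}(0),0)$ then differs from $(\varphi^{-1}(0),0)$ by countably many arc segments $\gamma\cap V_\nu$ pinched to the isolated points $x_\nu$, producing in every sufficiently small neighbourhood of $0$ a strict increase in the number of connected components. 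Therefore $(g^{-1}(0),0)$ and $(\varphi^{-1}(0),0)$ cannot be homeomorphic as germs, contradicting the $\s$-$V$-equivalence of $g$ and $\varphi$ forced by finite $\s$-$V$-determinacy. The main obstacle is this last topological step: without the codimension-$p$ manifold rigidity of Theorem \ref{T1} (which is vacuous when $n=p$), one must exploit the singular zero-arc $\gamma$ supplied by $(*)$ and verify carefully that the pinching of $\gamma$ at the centres $x_\nu$ produces a genuine topological distinction between the two zero-set germs.
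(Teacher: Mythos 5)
Your reductions $(1)\Rightarrow(2)\Rightarrow(3)$, $(4)\Leftrightarrow(5)\Leftrightarrow(6)$ and the implication $(5)\Rightarrow(1)$ coincide with the paper's, and replacing the non-ellipticity sequence by an analytic arc $\gamma\subset\sing(\varphi)\cap\varphi^{-1}(0)\setminus\Sigma$ via Proposition \ref{prop664} and curve selection is legitimate in this analytic/subanalytic setting. The genuine gap is the final topological step of $(3)\Rightarrow(4)$, which you yourself flag. First, $g=\varphi$ only outside $\bigcup_\nu B_\nu$ (the supports of the bumps), not outside $\bigcup_\nu V_\nu$; inside $B_\nu\setminus V_\nu$ the zero set of $g$ is completely uncontrolled, so the description of $(g^{-1}(0),0)$ as ``$\gamma$ pinched at the $x_\nu$'' and the count of connected components in small neighbourhoods are unjustified (and ``number of components'' is not a germ invariant in the form you use it, since both germs may have infinitely many components in a given neighbourhood). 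Second, non-degeneracy of the critical point of $h_\nu$ on $\mathbb{R}^n$ does not imply non-degeneracy of $h_\nu|_{M_\nu}$; you must choose $\epsilon_\nu$ so that the Hessian restricted to $T_{x_\nu}M_\nu$ is also non-degenerate, otherwise $g^{-1}(0)\cap V_\nu$ need not reduce to $\{x_\nu\}$.

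Both defects are repairable, and it is worth noting how the paper closes the case $n=p$: it does not use a pinching comparison at all. Once some realisation's zero set meets $\mathbb{R}^n\setminus\Sigma$ --- and your arc $\gamma$ already shows $\varphi^{-1}(0)\setminus\Sigma\neq\emptyset$, with no bump construction needed --- Corollary \ref{manifold}(2) (resting on the Sard-type Lemma \ref{transv} and curve selection applied to the analytic, hence subanalytic, realisation $\varphi$) says that $\Sigma$-$V$-sufficiency of the $r$-jet in the square case forces $(g^{-1}(0),0)=(\varphi^{-1}(0),0)\subset(\Sigma,0)$ for every realisation, an immediate contradiction. Alternatively, if you wish to keep your explicit construction, the correct finish is: the $V_\nu$-local analysis alone shows each $x_\nu$ is an isolated point of $g^{-1}(0)$, and these accumulate at $0$, so $g^{-1}(0)$ is not locally connected at $0$; whereas $\varphi^{-1}(0)$ is an analytic germ and hence, by the local conic structure theorem, has a fundamental system of connected neighbourhoods of $0$; since the $\Sigma$-$V$-equivalence homeomorphism fixes $\varphi^{-1}(0)\cap\Sigma\ni 0$, local connectedness at $0$ would be preserved --- contradiction. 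Either repair completes your argument; as written, the decisive step is missing.
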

\begin{proof} The proofs are the same as in the previous theorem except for 
the implication $(3) \implies (4)$, where we conclude by the following:
since $n=p$ and the set-germ $f^{-1}(0)\setminus \s$ is not empty, this 
contradicts the $\s$-V-sufficiency (see  Corollary \ref{manifold}).
\end{proof} 


\section{Relative $\mathcal{K}$ equivalence}
Let $\mathcal{E}({n})$ be the local ring of germs of $C^{\infty}$ functions $f:(\mathbb{R}^{n},\ 0)\rightarrow \mathbb{R}$ with maximal ideal ${m}_{n}$. 
For a germ of closed subset $\s$ of $\mathbb{R}^n$ such that $0\in \s,$ we suppose moreover that $\s$ is coherent (see Definition \ref{coherent}). 
We now generalise Mather's notion of contact equivalence (see \cite{Mather}); 
we say that two map germs $f$ and $g$ 
$:(\mathbb{R}^{n},\ 0)\rightarrow(\mathbb{R}^{p},\ 0)$ are  
$\mathcal{K}_{\s}$ {\em equivalent} if there exists a germ of diffeomorphism $h\in  \mathcal{R}_{\s}^{fix}$ and a $C^{\infty}$ germ $ A:(\mathbb{R}^{n},\ 0)\rightarrow GL(\mathbb{R},\ p)$ such that $f=A\cdot g \circ h;$
 here $\cdot$ denotes matrix multiplication of $A$ by the vector-valued function $g\circ h$ in $\mathbb{R}^{p}$. 
Now let $f= (f_{1},\ \ldots,f_{p})$ be an element in 
$m_{n}\mathcal{E}({n})^{p},$ let $J_{\mathcal{C}}(f)$ be the ideal in $\mathcal{E}({n})$ generated by $f_{1},\ \ldots,\ f_{p}$ and let 
$\displaystyle \langle\frac{\partial f}{\partial {x}_{1}},\ \ldots,\ \displaystyle \frac{\partial f}{\partial x_{n}}\rangle$ be the $\mathcal{E}({n})$ submodule of $\mathcal{E}({n})^{p}$ generated by the partial derivatives $\displaystyle \frac{\partial f}{\partial x_{i}},\ i=1,\ \ldots,\ n$. In analogy with the $ \mathcal{K}$ tangent space of a map germ  (see  \cite{Mather} ), we define the $\mathcal{E}({n})$ submodule of $\mathcal{E}({n})^{p}$ :
$$
T\mathcal{K}_{\s}(f):= \mathfrak{m}_{\s}\langle\frac{\partial f}{\partial x_{1}},\ \ldots,\ \frac{\partial f}{\partial x_{n}}\rangle + 
J_{\mathcal{C}}(f)\mathcal{E}({n})^{p}.
$$
\begin{prop}\label{P1}
A necessary and sufficient condition for $f$ to be $\mathcal{K}_{\s}$ determined by a finite jet (resp. $\infty$-jet) is that for some $ k<\infty$, $\mathfrak{m}_{\s}^{k}\mathcal{E}({n})^{p}\subset T\mathcal{K}_{\s}f$
(resp. $\mathfrak{m}_{\s}^{\infty}\mathcal{E}({n})^{p}\subset T\mathcal{K}_{\s}f$ ).
\end{prop}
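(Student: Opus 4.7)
The plan is to adapt Mather's homotopy method to the relative setting; the coherence hypothesis on $\s$ (so that $\mathfrak{m}_\s$ is a finitely generated ideal of $\mathcal{E}(n)$) is crucial because it allows Nakayama-type manipulations with $\mathcal{E}(n)$-modules. I will focus on the finite jet case; the $\infty$-jet case runs along the same lines, replacing $\mathfrak{m}_\s^k$ by $\mathfrak{m}_\s^\infty$ and using that flat functions are closed under the operations below.

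For sufficiency, assume $\mathfrak{m}_\s^k \mathcal{E}(n)^p \subset T\mathcal{K}_\s f$ for some finite $k$. Given $g\in \mathcal{E}(n)^p$ with $j^{N}g(\s;0)=j^{N}f(\s;0)$ for $N$ sufficiently large (say $N\geq 2k$), form the homotopy $F_t := f + t(g-f)$, $t\in[0,1]$. The difference $g-f$ lies in $\mathfrak{m}_\s^{N+1}\mathcal{E}(n)^p \subset \mathfrak{m}_\s\cdot(\mathfrak{m}_\s^k\mathcal{E}(n)^p)\subset \mathfrak{m}_\s\, T\mathcal{K}_\s f$, so by a Nakayama argument the inclusion propagates uniformly: $\mathfrak{m}_\s^k\mathcal{E}(n)^p\subset T\mathcal{K}_\s F_t$ for every $t\in[0,1]$, with generators depending smoothly on $t$. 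Hence one can write
\begin{equation*}
g(x)-f(x) \;=\; -\sum_{i=1}^n \xi_i(x,t)\,\frac{\partial F_t}{\partial x_i}(x)\;+\;B(x,t)\cdot F_t(x),
\end{equation*}
where $\xi_i(\cdot,t)\in \mathfrak{m}_\s$ and $B(\cdot,t)$ is a smooth $p\times p$ matrix-valued germ, both depending smoothly on $t$.

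I then integrate the time-dependent vector field $X(x,t) := \partial/\partial t - \sum_i \xi_i(x,t)\,\partial/\partial x_i$ on $(\R^n\setminus\s)\times[0,1]$, extended by $\partial/\partial t$ on $\s\times[0,1]$. Because each $\xi_i$ vanishes on $\s$ and is smooth, $\|\xi_i(x,t)\|\precsim d(x,\s)$, so $X$ satisfies the relative Lipschitz condition of \S 2.4. Proposition \ref{integral} yields a continuous flow; the time-one map $h\in \mathcal{R}_\s^{\mathrm{fix}}$. In parallel I solve, along each integral curve, the linear matrix ODE $\dot A(t) = -A(t)\,B(\gamma(t),t)$ with $A(0)=I$; since $B$ is smooth and bounded, $A$ is a germ of smooth $GL(p,\R)$-valued map. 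By construction $A(t)\cdot F_t\circ h_t$ is constant in $t$, giving $f = A\cdot g\circ h$ at $t=1$, i.e.\ $\mathcal{K}_\s$-equivalence.

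For necessity, suppose $f$ is $\mathcal{K}_\s$-determined by its $k$-jet on $\s$. For any $\phi\in \mathfrak{m}_\s^{k+1}\mathcal{E}(n)^p$ and every $s\in\R$, the germ $f+s\phi$ has the same $k$-jet on $\s$ as $f$, hence is $\mathcal{K}_\s$-equivalent to $f$. A parametrised version of the construction above (or a Thom-Levin style argument, exploiting smooth dependence of the flow on $s$) produces a smooth one-parameter family $(h_s, A_s)$ realising these equivalences, with $(h_0,A_0)=(\mathrm{id},I)$. Differentiating the identity $f = A_s\cdot (f+s\phi)\circ h_s$ at $s=0$ yields
\begin{equation*}
0 \;=\; \sum_{i=1}^n \dot h_{s}^{i}(0)\,\frac{\partial f}{\partial x_i} \;+\; \dot A_s(0)\cdot f \;+\; \phi,
\end{equation*}
with $\dot h_s^{i}(0)\in \mathfrak{m}_\s$ (since $h_s$ fixes $\s$) and $\dot A_s(0)\in \mathcal{E}(n)$-valued matrix. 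This exhibits $\phi\in T\mathcal{K}_\s f$, whence $\mathfrak{m}_\s^{k+1}\mathcal{E}(n)^p\subset T\mathcal{K}_\s f$.

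The main obstacle is the sufficiency direction, specifically the uniform Nakayama argument giving $\mathfrak{m}_\s^k\mathcal{E}(n)^p\subset T\mathcal{K}_\s F_t$ for all $t\in[0,1]$: this is exactly where coherence of $\s$ enters, since it makes $T\mathcal{K}_\s F_t$ a finitely generated $\mathcal{E}(n)$-module and permits the Nakayama lemma to be applied. A secondary technical point is verifying that the integration of $X$ and of the matrix ODE combine to produce germs at $0\in\R^n$ (not merely germs along an integral curve); this relies on compactness of $[0,1]$ and the uniform relative Lipschitz estimate provided by Proposition \ref{integral}.
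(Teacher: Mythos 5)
Your sufficiency half is essentially the paper's own homotopy argument (Nakayama via coherence, a vector field with coefficients vanishing on $\s$, a linear matrix ODE for $A$, connectedness of $[0,1]$), but two steps are shaky. First, you integrate the vector field by appealing to Proposition \ref{integral}, which only produces a continuous flow of homeomorphisms; $\mathcal{K}_{\s}$-equivalence requires a germ of \emph{diffeomorphism} in $\mathcal{R}_\s^{\text{fix}}$, and your matrix ODE needs the flow to depend smoothly on $(x,t)$ for $A$ to be a smooth $GL(p,\R)$-valued germ. Since the coefficients $\xi_i$ lie in $\mathfrak{m}_{\s}\mathcal{E}(n+1)$ they are smooth and vanish on $\s$, so one should integrate by ordinary smooth ODE theory (as the paper does): uniqueness then gives $h_t|_{\s}=\mathrm{id}$ together with smoothness of $h_t$ and of $A$. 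Second, in the finite-jet case your key inclusion $\mathfrak{m}_{\s}^{N+1}\mathcal{E}(n)^p\subset\mathfrak{m}_{\s}\cdot\bigl(\mathfrak{m}_{\s}^{k}\mathcal{E}(n)^p\bigr)$ is asserted without proof: here $\mathfrak{m}_{\s}^{s}$ is the ideal of $(s-1)$-flat functions along $\s$, not a power of $\mathfrak{m}_{\s}$, and for a general coherent closed $\s$ such a factorisation is a nontrivial division statement. The paper avoids this entirely by proving the $\infty$-jet statement and running Nakayama over $\mathcal{E}(n+1)$ at the origin of $\R\times\R^n$, where the needed extra factor is simply $t\in m_{n+1}$ (because $F-f=th$), and then propagating along $[0,1]$ by showing $T\mathcal{K}_{\s}f_{t_0}=T\mathcal{K}_{\s}f$.

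The necessity half has a genuine gap. You assume the equivalences $(h_s,A_s)$ between $f$ and $f+s\phi$ can be chosen to depend smoothly (or at least differentiably at $s=0$) on $s$ with $(h_0,A_0)=(\mathrm{id},I)$; determinacy alone gives an equivalence for each fixed $s$ with no control on how it varies, and your proposed remedy (``a parametrised version of the construction above'') is circular, since that construction is exactly the sufficiency proof and presupposes the tangent-space inclusion you are trying to establish. Mather's classical necessity argument replaces this by a finite-dimensional jet-group orbit computation, which has no analogue here because jets relative to a general closed set $\s$ do not form a finite-dimensional manifold. In the paper the ``necessary'' direction is not obtained by differentiating a family of equivalences at all: it is recovered through the chain $\mathcal{K}_{\s}$-determinacy $\Rightarrow$ $\s$-$V$-determinacy $\Rightarrow$ $\s$-ellipticity of $J_{\mathcal{K}}(f)$ (the perturbation/Sard argument as in Theorem \ref{T1}) $\Rightarrow$ condition (t) via Lemma \ref{elliptic}, as organised in Theorem \ref{thmequiv}. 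As written, your necessity argument does not stand without either reproducing that chain or giving an independent justification of the smooth dependence on $s$.
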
 

\begin{proof}
Assume that $f\in \mathcal{E}({n})^{p}$ satisfies the condition 
$\mathfrak{m}_{\s}^{\infty}\mathcal{E}({n})^{p}\subset T\mathcal{K}_{\s}f$, 
denoted by (t) as in \cite{wall} . Let $h\in \mathfrak{m}_{\s}^{\infty}\mathcal{E}({n})^{p}$. It is clear that we only have to prove that $f$ and $f+h$ are $\mathcal{K}_{\s}$ equivalent. Define $F: (\mathbb{R}\times \mathbb{R}^n,\ 0)\rightarrow(\mathbb{R}^{p},\ 0)$ by $F(t,\ x) =f(x)+th(x)$.

\begin{lem}
{\it If} $f$ {\it satisfies condition} (t) {\it then we can find a germ of} a 
{\it smooth vector field} $X$ {\it around} $(0,0)$ {\it in} 
$\mathbb{R}\times \mathbb{R}^{n}$ {\it of the following form}:

$X =\displaystyle \frac{\partial}{\partial t}+\sum_{j=1}^{n}X_{j}(t,\ x) \displaystyle \frac{\partial}{\partial x_{j}}$ 
where $X_{j}(t,\ x)=0$ for $x\in \Sigma$ and such that: \\ $DF(X)\in J_{ \mathcal{C}} (F) \mathcal{E}({n+1})^{p}$.
\end{lem} 
\begin{proof} From the coherency condition, the following $ \mathcal{E}(n)$ 
module
$$
\mathfrak{m}_{\s}\langle\frac{\partial f}{\partial x_{1}},\ldots,\ \frac{\partial f}{\partial x_{1}}\rangle+J_{ \mathcal{C}}(f) \mathcal{E}(n)^{p}\ =T \mathcal{K}_{\s}f
$$
is finitely generated.
Considering $\mathcal{E}({n})$ as a subset of $\mathcal{E}({n+1})$ it follows 
from condition (t) that:
$$
m_{n+1}\mathfrak{m}_{\s}^{\infty} \mathcal{E}({n+1})^{p}\subset m_{n+1}\left(\mathfrak{m}_{\s} \mathcal{E}({n+1})\langle\frac{\partial f}{\partial x_{1}}, \ldots,\ \frac{\partial f}{\partial x_{n}}\rangle+J_{ \mathcal{C}}(f) \mathcal{E}({n+1})^{p}\right).
$$
Since $F-f\in m_{n+1}\mathfrak{m}_{\s}^{\infty}\mathcal{E}({n})^{p},$ 
this implies that

$$
\mathfrak{m}_{\s}\mathcal{E}({n+1})\displaystyle \langle\frac{\partial F}{\partial x_{1}} , \ldots, \displaystyle \frac{\partial F}{\partial {x}_{n}}\rangle+J_{ \mathcal{C}}(F) \mathcal{E}({n+1})^{p}\subseteq \mathfrak{m}_{\s} 
\mathcal{E}({n+1})\langle\frac{\partial f}{\partial x_{1}} , \ldots, \displaystyle \frac{\partial f}{\partial x_{n}}\rangle+J_{ \mathcal{C}}(f) \mathcal{E}({n+1})^{p}.
$$

On the other hand we have
$$
\mathfrak{m}_{\s} \mathcal{E}({n+1})\displaystyle \langle \frac{\partial f}{\partial x_{1}},\ \ldots,\ \displaystyle \frac{\partial f}{\partial {x}_{n}}\rangle+J_{ \mathcal{C}}(f) \mathcal{E}({n+1})^{p}
$$
$$
\subseteq \mathfrak{m}_{\s}\mathcal{E}({n+1})\langle\frac{\partial F}{\partial x_{1}},\ldots, \frac{\partial F}{\partial x_{n}}\rangle+J_{ \mathcal{C}}(F)\mathcal{E}({n+1})^{p}+m_{n+1}\mathfrak{m}_{\s}^{\infty}\mathcal{E}({n+1})^{p}
$$
$$
\subseteq \mathfrak{m}_{\s}\mathcal{E}({n+1})\langle\frac{\partial F}{\partial x_{1}}\cdots,\ \frac{\partial F}{\partial x_{n}}\rangle+J_{ \mathcal{C}}(F)\mathcal{E}({n+1})^{p}
$$
$$
+m_{n+1}\left(\mathfrak{m}_{\s}\mathcal{E}({n+1})\langle\frac{\partial f}{\partial x_{1}},\ \ldots,\ \frac{\partial f}{\partial x_{n}}\rangle+J_{ \mathcal{C}}(f)\mathcal{E}({n+1})^{p}\right).
$$
Hence Nakayama's Lemma gives that
$$
\mathfrak{m}_{\s}\mathcal{E}({n+1})\displaystyle \langle\frac{\partial f}{\partial x_{1}} , \ldots, \displaystyle \frac{\partial f}{\partial x_{n}}\rangle+J_{ \mathcal{C}}(f)\mathcal{E}({n+1})^{p}=\mathfrak{m}_{\s}\mathcal{E}({n+1})\langle\frac{\partial F}{\partial x_{1}} , \ldots, \displaystyle \frac{\partial F}{\partial x_{n}}\rangle+J_{ \mathcal{C}}(F)\mathcal{E}({n+1})^{p}.
$$

From condition (t) it follows that
$$
h\in \mathfrak{m}_{\s}^{\infty}\mathcal{E}({n+1})^{p}\subseteq \mathfrak{m}_{\s}\mathcal{E}({n+1})\langle\frac{\partial F}{\partial {x}_{1}},\ldots, \frac{\partial F}{\partial x_{n}}\rangle+J_{ \mathcal{C}}(F)\mathcal{E}({n+1})^{p}.
$$
This shows that we can find germs $X_{j}\in\mathfrak{m}_{\s}\mathcal{E}({n+1})$ such that
$$
h+\sum_{j=1}^{n}X_{j}\frac{\partial F}{\partial x_{j}}\in J_{ \mathcal{C}}(F)\mathcal{E}({n+1})^{p}.
$$
Then $ \displaystyle X(t, \ x)=\frac{\partial}{\partial t}+\sum_{j=1}^{n}X_{j}(t,\ x)  \frac{\partial}{\partial x_{j}}$ satisfies the conditions of this lemma.
\end{proof} 

Now, integrate the vector field $X$ in the lemma above around $(0,0)$ 
in $\mathbb{R}\times \mathbb{R}^{n}$. We get a family $\{h_{t}\}$ of diffeomorphisms $(\mathbb{R}^{n},\ 0)\rightarrow(\mathbb{R}^{n},\ 0)$ such that $h_{t}|_{\s}=Id$. The condition $DF(X)\in J_{ \mathcal{C}}(F)\mathcal{E}({n+1})^{p}$ gives that we can find a $p\times p$ matrix $A(t,\ x)$ with entries in $\mathcal{E}({n+1})$ such that
$$
\frac{d}{dt} F(t,\ h_{t}(x)) = DF(t,\ h_{t}(x)) X(t,\ h_{t}(x)) 
= A(t,\ h_{t}(x)) \cdot F(t,\ h_{t}(x))
$$
which gives that $F(t,\ h_{t}(x))$ is a solution of the differential equation 
${y'}=A(t,\ h_{t}(x)) \cdot y$ with initial condition $y(0) = f(x)$ for fixed 
$x$.
Since the solution of this differential equation is unique and smooth in $x$ 
and $t$ of form $y(x,t)=A(t, x) \cdot y(x, 0)$ where $A(t,\ x)$ is an invertible matrix, we can conclude that $F(t,\ h_{\mathrm{t}}(x)) = A(t, x) \cdot f(x)$. 
Since this holds in a neighbourhood of $(0,0)$ in 
$\mathbb{R}\times \mathbb{R}^{n}$ we can conclude that $f$ and $f+th$ are 
$\mathcal{K}_{\s}$ equivalent for small $\mathrm{t}$. 
Now fix an arbitrary $t_{0}\in[0,1]$, and let 
$f_{t_{0}}\in \mathcal{E}({n})^{p}$ denote $f+t_{0}h$. 
From condition (t) it follows easily that 
$T \mathcal{K}_{\s}f_{t_{0}}\subset T\mathcal{K}_{\s}f$ and that 
$T\mathcal{K}f\subseteq T\mathcal{K}_{\s}f_{t_{0}}+m_{n}(T\mathcal{K}_{\s}f)$. 
Therefore Nakayama's Lemma gives that 
$T\mathcal{K}_{\s}f=T\mathcal{K}_{\s}f_{0}$. 
Thus $T\mathcal{K}_{\s}f_{t_{0}}$ also satisfies condition (t), and it follows 
from above that $f_{t_{0}}$ and $f+th$ are $ \mathcal{K}_{\s}$ equivalent when 
$|t-t_{0}|$ is small. 
Connectedness of $[0,1]$ gives that $f$ and $f+h$ are $ \mathcal{K}_{\s}$ equivalent.
\end{proof}
Let us also introduce the notion of $C^{r}$-$\mathcal{K}_{\s}$ equivalence $ 0\leq r\leq\infty$ which is the analogue of ordinary $\mathcal{K}_{\s}$ equivalence just using $C^{k}$ diffeomorphisms instead of $C^{\infty}$ diffeomorphisms in the definition of $\mathcal{K}_{\s}$ equivalence.
Let $J_{\mathcal{R}}(f)$ be the ideal in $\mathcal{E}({n})$ generated by the $p\times p$ minors of $Df$ and put $J_{\mathcal{K}}(f)=J_{\mathcal{R}}(f)+J_{\mathcal{C}}(f)$. 
Following \cite{brodersen} we  denote:
\begin{enumerate}[]
\item $(a_{r})$ for  $0\leq r\leq\infty,$  $f$ is infinitely 
$C^{r}$-$\mathcal{K}_{\s}$ determined.
\item $(\mathrm{b}_{r})$ for $0\leq r \le \infty,$ $f$ is finitely 
$C^{r}$-$\mathcal{K}_{\s}$ determined.
\item (c) $J_{\mathcal{K}}(f)$ is $\s$-elliptic.
\item (t) $ \mathfrak{m}_{\s}^{\infty}\mathcal{E}({n})^{p}\subset T\mathcal{K}_{\s}f.$
\item $({v}_{1})$ $f$ is infinitely $\s$-$ V$ determined.
\item $({v}'_{1})$ $f$  is infinitely $\s$-SV-determined.
\item $({v}_{2})$ $f$ is finitely $\s$-$V$-determined.
\item $({v}'_{2})$ $f$ is finitely $\s$-$SV$-determined.
\end{enumerate}
Now we have:
\begin{thm}\label{thmequiv} 
For $f\in \mathcal{E}({n})^{p},$ $n>p,$ the conditions
$$
({a}_{{r}}) \ (0\leq r\leq\infty), \ ({b}_{r}) \ (0\leq r<\infty), \ (c), 
\ (t), \ ({v}_{1}), \ ({v}'_{1}), \ ({v}_{2}) \ \text{and} \ ({v}'_{2})
$$ 
are all equivalent.
\end{thm}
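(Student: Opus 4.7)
The plan is to close a cycle of implications among all eight conditions. I begin by assembling the trivial links. Finite determinacy implies infinite determinacy (the hypothesis on $g$ is weakened), so $(v_2)\Rightarrow(v_1)$, $(v'_2)\Rightarrow(v'_1)$ and $(b_r)\Rightarrow(a_r)$; $(v'_1)\Rightarrow(v_1)$ and $(v'_2)\Rightarrow(v_2)$ since $SV$-equivalence is stronger than $V$-equivalence; and $(a_\infty)\Rightarrow(a_r)$ for every $0\le r\le\infty$, since any $C^\infty$ diffeomorphism is $C^r$. Most importantly, a $C^r$-$\mathcal{K}_\s$ equivalence $f=A\cdot g\circ h$ with $h\in\mathcal{R}_\s^{\text{fix}}$ and $A(x)\in GL(\R,p)$ satisfies $f^{-1}(0)=h^{-1}(g^{-1}(0))$, so $h$ already realises an $\s$-$SV$ equivalence; hence $(a_r)\Rightarrow(v'_1)$ and $(b_r)\Rightarrow(v'_2)$ for every $r\ge 0$. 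Proposition~\ref{P1} supplies $(a_\infty)\Leftrightarrow(t)$, and Theorem~\ref{T1} supplies $(c)\Leftrightarrow(v_2)\Leftrightarrow(v'_2)$.

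The algebraic bridge is $(c)\Leftrightarrow(t)$. For $(c)\Rightarrow(t)$, Lemma~\ref{elliptic} gives $\mathfrak{m}_\s^\infty\subset J_{\mathcal{K}}(f)=J_{\mathcal{R}}(f)+J_{\mathcal{C}}(f)$. The adjugate identity $M\cdot\mathrm{adj}(M)=\det(M) I_p$ applied to any $p\times p$ submatrix $M$ of $Df$ yields $J_{\mathcal{R}}(f)\mathcal{E}(n)^p\subset\langle\partial f/\partial x_1,\ldots,\partial f/\partial x_n\rangle_{\mathcal{E}(n)}$; multiplying by $\mathfrak{m}_\s$ and adjoining the trivial inclusion $J_{\mathcal{C}}(f)\mathcal{E}(n)^p\subset T\mathcal{K}_\s f$ produces
\[ \mathfrak{m}_\s^\infty\mathcal{E}(n)^p=\mathfrak{m}_\s\cdot\mathfrak{m}_\s^\infty\mathcal{E}(n)^p\subset\mathfrak{m}_\s J_{\mathcal{K}}(f)\mathcal{E}(n)^p\subset T\mathcal{K}_\s f, \]
which is $(t)$. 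The reverse $(t)\Rightarrow(c)$ will be recovered through cycle closure rather than proved directly.

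The decisive new implication is $(v_1)\Rightarrow(c)$, obtained by upgrading the proof of Theorem~\ref{T1}(3)$\Rightarrow$(4) from finite to infinite $V$-determinacy. If $(c)$ fails, I extract a sequence $(x_\nu)\subset\R^n\setminus\s$ tending to $0$ with $\|x_{\nu+1}\|<\tfrac{1}{3}d(x_\nu,\s)$ along which $Z(f,x_\nu)=o(d(x_\nu,\s)^s)$ for every $s\in\N$; the Bochnak-Kuo Lemma (Lemma~\ref{bochnakkuo1} with Remark~\ref{bochnakkuo2}) then supplies vectors $\lambda_\nu^{(k)}$ obeying conditions (a)--(c) of that proof. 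The perturbation $\eta=(\eta_1,\ldots,\eta_p)$ assembled there from smooth cutoffs supported on the disjoint balls $B_\nu\subset\R^n\setminus\s$ is simultaneously $C^\infty$ and flat on $\s$---each $y\in\s$ lies in the open complement of every $B_\nu$, so all partial derivatives of every summand vanish at $y$---hence $g:=f-\eta$ satisfies $j^\infty g(\s;0)=j^\infty f(\s;0)$. Yet by construction, near each $x_\nu$, $g^{-1}(0)$ is cut out of the codimension-$(p-1)$ manifold $M_\nu$ by a non-degenerate quadratic form $h_\nu$, so (using $n>p$) it cannot be a topological manifold of codimension $p$ at $x_\nu$. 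Combined with Lemma~\ref{transv} adapted to $(v_1)$ via the same Sard argument, which produces a realisation $g_0$ of $j^\infty f(\s;0)$ whose zero set off $\s$ is a smooth codimension-$p$ submanifold and forces any $\s$-$V$-equivalent map to share that structure, this is the required contradiction.

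For the finite side, $(c)\Rightarrow(b_r)$ uses $\s$-$C^r$-rigidity from Theorem~\ref{T1}: rigidity furnishes, for every $g$ agreeing with $f$ to sufficiently high order on $\s$, a diffeomorphism $\tau\in\mathcal{R}_\s^{\text{fix}}$ with $\mathcal{E}_{[r+1]}(n)(f\circ\tau)=\mathcal{E}_{[r+1]}(n)(g)$, and a Nakayama-style argument in the module $\mathcal{E}_{[r+1]}(n)^p$ upgrades this ideal equality to an invertible $C^r$ matrix $A$ with $f\circ\tau=A\cdot g$, yielding $(b_r)$. The two cycles
\[ (c)\Rightarrow(t)\Rightarrow(a_\infty)\Rightarrow(a_r)\Rightarrow(v'_1)\Rightarrow(v_1)\Rightarrow(c),\qquad (c)\Rightarrow(b_r)\Rightarrow(v'_2)\Rightarrow(v_2)\Rightarrow(c), \]
together with the trivial vertical links $(v'_1)\Rightarrow(v_1)$, $(v'_2)\Rightarrow(v_2)$, and $(v_2)\Rightarrow(v_1)$, establish the claimed equivalences. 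The main obstacle lies in the step $(v_1)\Rightarrow(c)$: one must simultaneously verify that $\eta$ is $C^\infty$ globally and flat on $\s$, balancing the cutoff derivative bounds $\|\partial^\beta\psi((\cdot-x_\nu)/d(x_\nu,\s))\|\precsim d(x_\nu,\s)^{-|\beta|}$ against a decay of the amplitudes $\epsilon_\nu$ and $\|\lambda_\nu^{(k)}\|$ faster than every polynomial in $d(x_\nu,\s)$ as in the classical Bochnak-Kuo-Tougeron construction, while preserving non-degeneracy of $h_\nu|_{M_\nu}$ under the choice of $\epsilon_\nu$.
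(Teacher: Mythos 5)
Your argument is correct and follows essentially the same route as the paper: Proposition \ref{P1} for $(t)\Leftrightarrow(a_\infty)$, Lemma \ref{elliptic} for $(c)\Rightarrow(t)$, an adaptation of the proof of $(3)\Rightarrow(4)$ of Theorem \ref{T1} for $(v_1)\Rightarrow(c)$, and Theorem \ref{T1} together with the identification of $(b_r)$ with $\s$-$C^r$-rigidity for the finite-determinacy conditions. The only repair needed is in $(c)\Rightarrow(t)$: instead of the unproved equality $\mathfrak{m}_{\s}^\infty\mathcal{E}(n)^p=\mathfrak{m}_{\s}\,\mathfrak{m}_{\s}^\infty\mathcal{E}(n)^p$, invoke statement (2) of Lemma \ref{elliptic}, which yields $\mathfrak{m}_{\s}^\infty\subset\mathfrak{m}_{\s}^\infty J_{\mathcal{K}}(f)\subset\mathfrak{m}_{\s}J_{\mathcal{K}}(f)$ and hence, by your adjugate observation, the inclusion into $T\mathcal{K}_{\s}f$.
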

In Proposition \ref{P1}, we have proved the equivalence 
$(t)\iff({a}_{{\infty}}).$ 
By definition, the implications $({a}_{\infty})\implies({a}_{{r}})\implies
({v}_{1}),$ $({v}_{{2}})\implies({v}_{1})$ and $({v}'_{{2}})\implies
({v}'_{1})$ are obvious. 
The  proof of $(3) \implies (4)$ of Theorem  \ref{T1} can be easily adapted 
to prove the implication $(v_{1}) \implies (c),$ and 
Lemma \ref{elliptic} gives the implication $(c)\implies (t).$
In addition,  the notion of $C^{r}$ rigid is equivalent to $(\mathrm{b}_{r})$. 
Therefore the equivalences 
$({c})\iff({v}_{2})\iff({v}'_{2}) \iff({b}_{r}),$ ($r < \infty$),
are proved in the same way as in Theorem \ref{T1}. 
Thus Theorem \ref{thmequiv} is established.
\begin{rmk}
We may notice that in Theorem \ref{thmequiv}, we consider only $(b_{r})$ with $0\leq r<\infty$;  in fact in general $(a_{\infty})$ doesn't implies $(b_{\infty})$, even in the absolute case. For example, the  $C^\infty$ function germ at the origine,
$f(x,y)=(x^2+y^2)^2,$ is  infinitely $C^{\infty}$-$\mathcal{K}$-determined, because it's jacobian ideal is elliptic (see  \cite{W1}  Theorem 1.2 for this characterisation of infinite determinacy)
but it is not finitely $C^{\infty}$-$\mathcal{K}$-determined, since the zeros set of each representative of it's complexification has  singular points arbitrary close to the origine
(see \cite{wall} Proposition 1.7 for this characterisation of finite determinacy). 
\end{rmk}


\end{document}